\newcommand{\CC}{\mathbb C}
\newcommand{\NN}{\mathbb N}
\newcommand{\RR}{\mathbb R}
\newcommand{\ZZ}{\mathbb Z}
\newcommand{\del}{\partial}
\newcommand{\e}{\varepsilon}
\newcommand{\K}{{\mathrm K}}
\newcommand{\al}{\alpha}
\newcommand{\be}{\beta}
\newcommand{\om}{\omega}
\newcommand{\Ric}{\mathrm{Ric}}
\newcommand{\Diff}{\mathrm{Diff}}
\newcommand{\calC}{{\mathcal C}}
\newcommand{\calD}{{\mathcal D}}
\newcommand{\calE}{{\mathcal E}}
\newcommand{\calF}{{\mathcal F}}
\newcommand{\calG}{{\mathcal G}}
\newcommand{\calH}{\mathcal H}
\newcommand{\calL}{{\mathcal L}}
\newcommand{\calM}{{\mathcal M}}
\newcommand{\calO}{{\mathcal O}}
\newcommand{\calP}{{\mathcal P}}
\newcommand{\calS}{{\mathcal S}}
\newcommand{\calT}{{\mathcal T}}
\newcommand{\calU}{{\mathcal U}}
\newcommand{\calV}{{\mathcal V}}
\newcommand{\calW}{{\mathcal W}}
\newcommand{\tr}{{\mathrm{tr}}\,}
\newcommand{\olg}{\overline{g}}
\newcommand{\circR}{\overset{\circ}{R}}
\newcommand{\frakc}{\mathfrak c}
\newcommand{\frakp}{\mathfrak p}
\newcommand{\Si}{\Sigma}
\newcommand{\Sip}{\Si_{\mathfrak p}}
\newcommand{\Stt}{\calS_{\mathrm{tt}}}
\newcommand{\cc}{\mathrm{cc}}
\newcommand{\conic}{\mathrm{conic}}
\newcommand{\sing}{\mathrm{sing}\,}
\newcommand{\CM}{\calC\calM}
\newcommand{\HCM}{\calH\calC}
\newcommand{\hc}{\mathrm{hc}}
\newcommand{\id}{\operatorname{id}}
\newcommand{\sign}{\operatorname{sign}}
\newtheorem{theorem}{Theorem}
\newtheorem{proposition}{Proposition}
\newtheorem{corollary}{Corollary}
\newtheorem{lemma}{Lemma}
\theoremstyle{definition}
\newtheorem{definition}{Definition}
\theoremstyle{remark}
\newtheorem*{remark}{Remark}
\title{Teichm\"uller theory for conic surfaces}
\author{Rafe Mazzeo \\ Stanford University \and Hartmut Weiss 
\\ Christian-Albrechts-Universit\"at zu Kiel}
\date{}
\begin{document}

\maketitle

\section{Introduction}
Let $\Si$ be a compact oriented surface of genus $\gamma$, $\frakp = \{p_1, \ldots, p_k\} \subset \Si$ a fixed 
collection of $k$ distinct points, and set $\Sip = \Si \setminus \frakp$. A conformal class (or conformal structure) $\frakc$ on $\Sip$ is 
the set of all multiples $e^{2\phi} g$ of a given metric $g$; this determines a holomorphic structure on the open surface and vice versa.  
Our convention is that a holomorphic structure in a neighborhood of a puncture is one which extends over the 
puncture. The Teichm\"uller space $\calT_{\gamma,k}$ is the space of conformal classes on $\Si$ modulo diffeomorphisms of 
$\Si$ isotopic to the identity which fix $\frakp$.  The uniformization theorem states that each conformal class contains 
a constant curvature metric which is complete on $\Sip$. When $2-2\gamma-k < 0$, this is a finite area hyperbolic metric
and is unique (if the curvature is normalized to equal $-1$). In the few remaining cases ($(\gamma,k) = (0,0), (0,1), (0,2)$ or $(1,0)$), 
uniformizing metrics are either spherical or flat. 

In this paper we consider, as a different canonical choice, the constant curvature metrics with conic singularities 
at $\frakp$.  Thus at each singular point $p_j$ we assume that $(\Si, g)$ is asymptotic 
to a standard cone of angle $\theta_j$; writing $\theta_j = 2\pi (1+\beta_j)$, we refer to $\beta_j$ as 
the cone angle parameter at that point. We restrict attention in this paper to the case where all $\theta_j \in (0,2\pi)$, or 
equivalently $\beta_j \in (-1,0)$, and write $\vec \beta = (\be_1, \ldots, \be_k)$. 

There is a complete existence theory for conic constant curvature metrics with all cone angles less than $2\pi$.  To 
state it, define the quantity
\begin{equation}
\chi(\Sip, \vec\be) = \chi(\Si) + \sum_{j=1}^k \be_j 
\label{cec}
\end{equation}
and, when $\chi(\Si) = 2$ so $\Si = S^2$, and $k \geq 3$, the so-called {\it Troyanov region} 
\begin{equation}
\calF = \{\vec \beta \in (-1,0)^k :\, \sum_{i \neq j} \be_i < \be_j \ \ \forall\, j \}.
\label{Troy}
\end{equation}
\begin{theorem}{\rm \cite{Mc, Tr, LT}}
Let $\Sip$ be a punctured compact Riemann surface, $\olg$ a smooth metric on $\Si$ and $\vec \be \in (-1,0)^k$ a collection 
of cone angle parameters. If $\chi(\Sip, \vec\be) \leq 0$, then there exists a conic metric with
constant curvature $K \leq 0$ and area $1$ in the conformal class of $\olg$ and with specified cone angle parameters $\vec\be$;
this metric is unique. If $\chi(\Sip, \vec\be) > 0$ and $k > 2$, then there exists a unique such metric if and only if $\vec\be \in \calF$; 
if $k = 2$, then such a metric exists if and only if $\be_1 = \be_2$, and it is unique up to conformal dilations.  
\label{fundexist}
\end{theorem}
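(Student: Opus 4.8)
The plan is to reduce the construction to solving a single semilinear elliptic equation of Liouville type on the closed surface $\Si$, and then to analyze that equation by variational methods when $\chi(\Sip,\vec\be)\le 0$ and by concentration--compactness and blow-up methods when $\chi(\Sip,\vec\be)>0$. First I would fix a reference function $u_0$, smooth on $\Sip$, equal to $\be_j\log|z-p_j|$ plus a function smooth across $p_j$ in a holomorphic coordinate $z$ centered at each $p_j$; since every $\be_j>-1$, the weight $e^{2u_0}$ behaves like $|z-p_j|^{2\be_j}$ near $p_j$ and hence lies in $L^{p_0}(\Si,dA_{\olg})$ for some $p_0>1$. Writing a conformal metric as $g=e^{2(u_0+\phi)}\olg$, a standard computation with the conformal transformation rule for the Gauss curvature shows that $g$ has constant curvature $K$ with the prescribed cone angles if and only if
\begin{equation}
-\Delta_{\olg}\phi=K\,e^{2u_0}e^{2\phi}-h\qquad\text{on }\Si,
\label{liouv}
\end{equation}
where $h\in C^\infty(\Si)$ depends only on $(\Si,\olg,u_0)$ and satisfies $\int_\Si h\,dA_{\olg}=2\pi\chi(\Sip,\vec\be)$ (this identity being the conic Gauss--Bonnet formula). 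Integrating \eqref{liouv} then forces $K\cdot\mathrm{Area}(g)=2\pi\chi(\Sip,\vec\be)$, so once the area is normalized to $1$ the sign of $K$ must agree with that of $\chi(\Sip,\vec\be)$ --- which is exactly why the three regimes in the statement are the natural ones. The one analytic tool used throughout is the singular Moser--Trudinger inequality of Troyanov and of Chen: because $\min_j\be_j>-1$, there are constants $C_1,C_2$ depending only on $(\Si,\olg)$ and $\min_j\be_j$ with $\log\int_\Si e^{2u_0}e^{2(\phi-\bar\phi)}\,dA_{\olg}\le C_1\|\nabla_{\olg}\phi\|_{L^2}^2+C_2$, where $\bar\phi$ denotes the $\olg$-average of $\phi$; this makes the exponential nonlinearity finite on $H^1(\Si)$ and controlled by the Dirichlet energy.

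For $\chi(\Sip,\vec\be)<0$ one has $K=2\pi\chi(\Sip,\vec\be)<0$, and I would produce a solution of \eqref{liouv} by minimizing
\begin{equation}
E(\phi)=\int_\Si\Bigl(\tfrac12|\nabla_{\olg}\phi|^2+h\phi-\tfrac{K}{2}\,e^{2u_0}e^{2\phi}\Bigr)\,dA_{\olg}
\label{varfn}
\end{equation}
over $H^1(\Si)$. By the Moser--Trudinger bound and H\"older's inequality (using $e^{2u_0}\in L^{p_0}$), $E$ is finite, of class $C^1$, weakly lower semicontinuous, and strictly convex; it is coercive because the term $-\tfrac{K}{2}e^{2u_0}e^{2\phi}$ is nonnegative for $K<0$, the Dirichlet term controls $\phi-\bar\phi$, and in the constant direction the sign $\int_\Si h\,dA_{\olg}=2\pi\chi(\Sip,\vec\be)<0$ together with the convex exponential term controls $E$ as $\bar\phi\to\pm\infty$. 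The direct method then yields a minimizer $\phi$ solving \eqref{liouv}; integrating \eqref{liouv} and using $K=2\pi\chi(\Sip,\vec\be)$ shows that the resulting metric automatically has area $1$. Elliptic bootstrapping of \eqref{liouv} with the $L^{p_0}$ weight (in the style of Brezis--Merle) makes $\phi$ smooth on $\Sip$, and a local analysis near each $p_j$ confirms that $g=e^{2(u_0+\phi)}\olg$ is a genuine conic metric with the asserted cone angle --- its finer behaviour near $\frakp$ being precisely what the rest of the paper will describe. Uniqueness then follows directly: if $g_i=e^{2u_i}\olg$, $i=1,2$, are two such metrics of area $1$ (hence of the same curvature $K=2\pi\chi(\Sip,\vec\be)$), then $v:=u_1-u_2=\phi_1-\phi_2$ is bounded --- the logarithmic singularities cancelling --- and subtracting the two copies of \eqref{liouv} gives $-\Delta_{\olg}v=K(e^{2u_1}-e^{2u_2})$; pairing with $v$ yields $\|\nabla_{\olg}v\|_{L^2}^2=K\int_\Si(e^{2u_1}-e^{2u_2})v\,dA_{\olg}\le 0$ when $K<0$, forcing $v$ constant and then zero by the area normalization. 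The borderline case $\chi(\Sip,\vec\be)=0$ is easier still: then $K=0$ and \eqref{liouv} becomes the linear equation $\Delta_{\olg}\phi=h$, solvable precisely because $\int_\Si h\,dA_{\olg}=0$; its solution is unique up to an additive constant, and since that constant merely rescales the area, there is exactly one solution with area $1$.

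The genuinely hard case is $\chi(\Sip,\vec\be)>0$, which forces $\Si=S^2$ and $K=2\pi\chi(\Sip,\vec\be)>0$. Now the sign in \eqref{varfn} is reversed, the term $-\tfrac{K}{2}e^{2u_0}e^{2\phi}\le 0$ works against coercivity, the sharp value of the Moser--Trudinger constant $C_1$ becomes decisive, and both the convexity and the monotonicity arguments used above for uniqueness break down. One must instead study concentration and blow-up for subcritical approximations of \eqref{liouv} --- equivalently, as in \cite{LT}, study the developing map of the putative metric and its monodromy. The outcome is that loss of compactness can occur only through concentration of the conformal volume at a single cone point $p_j$; by a Pohozaev-type identity (equivalently, local Gauss--Bonnet) such a bubble carries total curvature $4\pi(1+\be_j)$, leaving the remainder $2\pi\bigl(\sum_{i\neq j}\be_i-\be_j\bigr)$ of the available total $2\pi\chi(\Sip,\vec\be)$, which is negative exactly when $\sum_{i\neq j}\be_i<\be_j$; so such concentration is ruled out at every $p_j$ precisely when $\vec\be\in\calF$. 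When some inequality fails one exhibits the corresponding obstruction --- in the extreme case $k=1$ this is the classical non-existence of a constant-curvature ``teardrop'' --- and the same circle of ideas also yields uniqueness when a solution does exist. For $k=2$ the picture is completely explicit: a constant positive curvature metric on $S^2$ with cone angles $2\pi(1+\be_1)$ and $2\pi(1+\be_2)$ must be a surface of revolution (a ``spindle''), and integrating \eqref{liouv} in the rotationally symmetric class shows the profile closes up only when $\be_1=\be_2$, in which case the metric is unique up to the holomorphic dilations $z\mapsto\lambda z$, $\lambda\in\CC^\ast$, that fix the two punctures. I expect the blow-up analysis underpinning the sharp role of $\calF$, together with the matching uniqueness statement in positive curvature, to be the main obstacle; this is exactly the content supplied by \cite{Tr,LT} beyond the existence theory of \cite{Mc} in non-positive curvature.
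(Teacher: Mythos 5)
Your outline is essentially correct, but it is worth being clear about what this paper actually does with Theorem~\ref{fundexist}: it does not reprove the existence and uniqueness statements at all (these are quoted from \cite{Mc}, \cite{Tr}, \cite{LT}), and the only piece it establishes from scratch is the \emph{necessity} of the Troyanov condition, in Proposition~\ref{troyanov-suff}. There the argument is purely synthetic: one forms the Dirichlet polygon $\calD_{p_j}$ of the cone surface inside the model suspension $\mathbb{M}^2_K(\theta_j)$ and observes $K\,\mathrm{Area}(\Sip,g)=K\,\mathrm{Area}\,\calD_{p_j}<K\,\mathrm{Area}\,\mathbb{M}^2_K(\theta_j)=4\pi(1+\be_j)$, which combined with Gauss--Bonnet gives $\sum_i\be_i<2\be_j$. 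Your blow-up/Pohozaev heuristic --- a bubble at $p_j$ absorbing total curvature $4\pi(1+\be_j)$ --- is the analytic shadow of exactly this inequality, but the Dirichlet-polygon argument delivers it in three lines with no concentration--compactness machinery, and with the strictness of the inequality transparently justified ($\Sip\neq\mathbb{M}^2_K(\theta_j)$ since $k\ge 3$). Your treatment of the case $\chi(\Sip,\vec\be)\le 0$ (reduction to the Liouville equation, coercivity and strict convexity of the functional, the $L^{p_0}$ weight and singular Moser--Trudinger inequality, and the energy-pairing uniqueness argument) is complete and is the standard McOwen/Troyanov route; this part stands on its own.

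The genuinely thin portion of your write-up is the positive-curvature \emph{existence} and \emph{uniqueness}: ``one must study concentration and blow-up \dots\ the outcome is that \dots'' and ``the same circle of ideas also yields uniqueness'' are descriptions of theorems rather than proofs --- sufficiency of $\vec\be\in\calF$ requires Troyanov's sharp form of the singular Moser--Trudinger constant (depending on $\min_j\be_j$) to restore coercivity, and uniqueness is the separate developing-map/monodromy argument of Luo--Tian. Deferring these to \cite{Tr} and \cite{LT} is legitimate here, since the theorem is stated as a citation, but you should flag them explicitly as imported rather than folding them into the sketch as if they followed from the $K<0$ analysis. Likewise, the assertion that a $k=2$ spherical cone metric must be a surface of revolution needs its own justification (e.g.\ via the developing map); it does not follow from integrating the Liouville equation in the symmetric class.
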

The fact that the sign of $K$ agrees with that of $\chi(\Sip, \vec\be)$ is a consequence of the Gauss-Bonnet theorem.
Existence and uniqueness when $K \leq 0$ is due to McOwen. Existence and uniqueness when $K > 0$ were proved 
by Troyanov and Luo-Tian, respectively, and the fact that the Troyanov condition is necessary for existence has been observed
by several people, and follows from an appealing geometric argument presented in \S 2 below.   

Observe also that when $\Si = S^2$, $k=2$ and $\be_1 = \be_2$, the solutions are the rotationally invariant models
$dr^2 + (1+\be_1)^2 \sin^2 r dy^2$. When $\be_1 \neq \be_2$ the closest thing to a canonical metric 
is a Ricci soliton, see \cite{PSS} and \cite{MRS}.

The variational method employed by Troyanov relies on the problem being coercive when $\vec\theta \in \calF$. This fails when any of 
the cone angles are greater than $2\pi$. Early work in this more general case by Umehara and Yamada \cite{UY} when $k=3$ and 
$K = 1$ and Eremenko \cite{Er} relies on complex analytic ideas. Some general existence results have been obtained using minimax 
theory by Carlotto and Malchiodi \cite{CM1}, \cite{CM2}, but these metods require that $\gamma \geq 1$. Quite recently, 
Mondello and Panov \cite{MonP} have established a dramatic existence theorem using entirely synthetic geometric methods 
when $\Si = S^2$. However, many interesting problems remain open.

In this paper we develop a systematic deformation theory for conic constant curvature metrics when all cone angles are less than $2\pi$;
in particular, we define and study the Teichm\"uller space $\calT^{\conic}_{\gamma,k}$  of conic constant curvature metrics on a surface
of genus $\gamma$ with $k$ conic points. 
The methods here are adopted from higher dimensional global analysis, generalizing Tromba's approach \cite{Tro} to the 
study of the standard Teichm\"uller space $\calT_\gamma$. The main new ingredient is the theory of elliptic conic operators. 
This is the simplest setting where the study of canonical metrics on singular spaces can be carried out in complete detail, 
and is also the one complex dimension version of the deformation theory of K\"ahler-Einstein edge metrics, cf.\ \cite{Don}, \cite{JMR}. 

We now describe our results more carefully. Let $\CM(\Sip)$ denote the set of all metrics on $\Si$ with conic singularities 
at the points in $\frakp$ (and with some fixed H\"older regularity to be specified later), and if $\vec \beta$ is
a $k$-tuple of cone angle parameters, then $\CM(\Sip, \vec\beta)$ is the subspace of conic metrics with those
specified cone angles. Inside of these are $\CM_{\cc}(\Sip)$ and $\CM_{\cc}(\Sip, \vec\beta)$ of constant 
curvature metrics. Note that we allow the curvature to be any real number, not just $-1,0,+1$, since the transitions 
between the hyperbolic, flat and spherical cases as the cone angles vary are of interest. 
Now consider the `gauge group' $\calG$ of diffeomorphisms of $\Sip$, again with some fixed regularity and decay 
properties near $\frakp$. This acts smoothly on $\CM_{\cc}(\Sip ,\vec \beta)$, but not on $\CM(\Sip, \vec \beta)$. 

There is another useful action which is not so canonical. This is of $\RR^k$ on $\CM(\Sip, \vec\beta)$ by localized 
conformal rescaling by different constants near each cone point. Note that such constant rescalings leave the cone angles unchanged.
To define this, choose cutoff functions $\chi_j$ with disjoint supports and with $\chi_j = 1$ near $p_j$. and set
\[
(\vec \lambda, g)  \mapsto  \exp( \textstyle\sum_j \chi_j \lambda_j) g.
\]

We shall prove the following results. The weighted H\"older spaces appearing here are defined in \S 4.1, and as for all the results
stated in this introduction, the precise statements with regularity assumptions are given in \S 6 and \S 8.
\begin{theorem}
The spaces $\CM_\cc(\Sip)$ and $\CM_{\cc}(\Sip, \vec\beta)$ are smooth Banach submanifolds in $\CM(\Sip)$.  
For any $\vec{\beta} \in \calF$, 
\[
\begin{array}{rcl}
(\RR^k \oplus r^\nu \calC^{2,\alpha}_b(\Sip)) \times \CM_{\cc}(\Sip ,\vec\beta) & \longrightarrow &\CM(\Sip, \vec\beta) \\
(\vec\lambda, \phi,g) & \longmapsto & e^{2( \sum_j \chi_j \lambda_j + \phi)}g 
\end{array}
\]
is a diffeomorphism.  
\end{theorem}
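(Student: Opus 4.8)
The plan is to reduce the statement to the standard existence and uniqueness theory (Theorem~\ref{fundexist}) together with an implicit function theorem argument showing that the decomposition of a conformal factor into its ``constant near the cone points'' part and a genuinely decaying remainder is a smooth isomorphism. First I would note that any $g \in \CM(\Sip,\vec\beta)$ lies in a unique conformal class on $\Sip$, and by Theorem~\ref{fundexist} that conformal class contains a unique constant curvature conic representative $g_{\cc}$ with area $1$ and cone angles $\vec\beta$ (here $\vec\beta \in \calF$ guarantees existence and uniqueness in the positive curvature case as well). So $g = e^{2u} g_{\cc}$ for a uniquely determined function $u$; the content of the theorem is that $u$ decomposes uniquely as $u = \sum_j \chi_j \lambda_j + \phi$ with $\vec\lambda \in \RR^k$ and $\phi \in r^\nu \calC^{2,\alpha}_b(\Sip)$, and that this decomposition, together with the dependence $g \mapsto g_{\cc}$, is smooth in the appropriate Banach space sense.

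The key analytic input is a mapping property of the conic Laplacian (or more precisely of the linearized constant-curvature operator) on weighted H\"older spaces. I would set up the nonlinear map
\[
\Phi : (\RR^k \oplus r^\nu \calC^{2,\alpha}_b(\Sip)) \times \CM_{\cc}(\Sip,\vec\beta) \longrightarrow \CM(\Sip,\vec\beta), \qquad \Phi(\vec\lambda,\phi,g) = e^{2(\sum_j \chi_j\lambda_j + \phi)} g,
\]
and verify it is smooth (this is routine, since exponentiation and multiplication are smooth on the relevant H\"older algebras once one checks that $r^\nu \calC^{2,\alpha}_b$ is closed under the needed operations and that $\nu$ is chosen in the admissible range). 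Surjectivity and injectivity both follow from the uniformization statement above: given $g' \in \CM(\Sip,\vec\beta)$, its constant curvature representative $g_{\cc}$ serves as the $\CM_{\cc}$ factor, and one must solve $\sum_j \chi_j \lambda_j + \phi = \tfrac12 \log(g'/g_{\cc})$ for $(\vec\lambda,\phi)$. The point is that $\tfrac12\log(g'/g_{\cc})$ is a conic function of class $\calC^{2,\alpha}$ (both metrics have the same cone angles, so the ratio is bounded and the leading singular behaviors cancel), hence at each $p_j$ it has a limiting value $\lambda_j$; subtracting $\sum_j \chi_j \lambda_j$ leaves a remainder that decays like $r^\nu$ by the structure of the conic expansion. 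That the map $g' \mapsto g_{\cc}$ is smooth — not merely well-defined — is exactly where one invokes the implicit function theorem applied to the constant-curvature equation $K(e^{2v}g') = \mathrm{const}$, using that the linearization $\Delta_{g_{\cc}} + 2K$ (or $\Delta_{g_{\cc}} - 2$ after normalization in the hyperbolic case) is an isomorphism on the relevant weighted spaces; this is precisely the point at which the hypothesis that $\CM_{\cc}(\Sip,\vec\beta)$ is a smooth Banach submanifold, asserted in the first sentence of the theorem, gets used.

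Finally I would assemble these pieces: smoothness of $\Phi$, a two-sided inverse built from $g' \mapsto (g_{\cc}, \vec\lambda, \phi)$, and smoothness of the inverse again via the implicit function theorem (the derivative of $\Phi$ at any point is an isomorphism of Banach spaces because it is, up to the invertible multiplication-by-$e^{2(\cdots)}$ factor, the model linear decomposition $(\vec\lambda,\phi) \mapsto \sum_j \chi_j\lambda_j + \phi$ tensored with the tangent isomorphism along $\CM_{\cc}$).

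\emph{Main obstacle.} The crux is the weighted-space analysis of the conic Laplacian: one must show that the indicial roots of $\Delta_{g_{\cc}}$ at each cone point of angle $\theta_j = 2\pi(1+\beta_j) < 2\pi$ are such that the weight exponent $\nu$ can be chosen strictly between $0$ and the first positive indicial root, so that (i) the constant function is \emph{not} in $r^\nu\calC^{2,\alpha}_b$ (forcing the genuine $\RR^k$ factor) but (ii) the operator $\Delta_{g_{\cc}} + c$ is Fredholm of index zero, indeed invertible, on $r^\nu\calC^{2,\alpha}_b$ (no indicial roots in the strip $(0,\nu]$, and no $L^2$-kernel by the maximum principle or by the sign of the zeroth-order term). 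Getting this spectral/indicial bookkeeping exactly right — and tracking it uniformly as $\vec\beta$ varies within $\calF$ and the curvature sign changes — is the technical heart; everything else is soft functional analysis. This is where the ``theory of elliptic conic operators'' advertised in the introduction does the real work, and the detailed statement with the admissible range of $\nu$ and $\alpha$ is deferred to \S 6 as promised.
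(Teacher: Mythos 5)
Your overall strategy --- reduce to Theorem~\ref{fundexist} for existence and uniqueness of the constant curvature representative in each conformal class, then apply the implicit function theorem to the curvature equation for smoothness, with the $\RR^k$ factor accounting for the constant (indicial root $0$) term in the expansion of the conformal factor at each cone point --- is essentially the route the paper takes: the Banach submanifold statement is proved via the surjectivity of $DK$ restricted to pure trace directions, and the decomposition of the conformal factor into $\sum_j\chi_j\lambda_j+\phi$ with $\phi$ decaying is exactly Proposition~\ref{sharpregres} combined with the partial expansion result Proposition~\ref{pr:impreg}.

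There is, however, a genuine gap at the step you yourself single out as the crux: invertibility of the linearized operator. First, a sign error: the linearization of $v\mapsto K(e^{2v}g)$ at a constant curvature metric is $\tfrac12(\Delta^g-2K_0)$, not $\Delta^g+2K_0$; in the hyperbolic case this is $\Delta^g+2$ (as used in \S 7), not $\Delta^g-2$. More seriously, your justification that there is ``no $L^2$-kernel by the maximum principle or by the sign of the zeroth-order term'' is valid only when $K_0\le 0$. In the spherical case $K_0>0$ --- which the theorem includes, since $\calF$ contains the region $\mathsf{Sph}$ --- the zeroth order term of $\Delta^g-2K_0$ has the wrong sign and the maximum principle gives nothing. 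What is actually required is that $2K_0$ lie strictly below the first nonzero eigenvalue of the Friedrichs extension of $\Delta^g$; this is the content of Proposition~\ref{eigenbound}, a Lichnerowicz--Obata type bound $\lambda_1\ge 2K_0$ proved via a Bochner argument (whose integrations by parts must be justified at the cone points using the expansion of eigenfunctions) together with an Alexandrov-geometry rigidity statement (Theorem~\ref{th:diam}) showing that equality forces the surface to be a round sphere or a spherical suspension, cases excluded when $k\ge 3$. Without this input, which feeds into Lemma~\ref{lem:conseq_eigenbound}, the injectivity and surjectivity of the linearization --- and hence the implicit function theorem step --- are not established precisely in the positive curvature regime. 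The rest of your outline is sound.
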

This is simply Poincar\'e's uniformization theorem for conic metrics, with the additional conclusion that the construction 
depends smoothly on the metric. 
\begin{theorem}
Fix any smooth element $g \in \CM_{\cc}(\Sip)$. Then there is an embedded $(6\gamma-6 + 3k)$-dimensional submanifold 
$\calS \subset \CM_{\cc}(\Sip)$ passing through $g$, such that 
\[
\mathrm{Diff}\, (\Sip) \times \calS \hookrightarrow \CM_{\cc}(\Sip), \quad (F, g) \mapsto F^* g
\] 
is a local diffeomorphism. 
\end{theorem}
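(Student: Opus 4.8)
The plan is to construct the slice $\calS$ as the image of a local section of the $\Diff(\Sip)$-action, via the standard "orthogonal complement to the gauge orbit" construction adapted to the conic setting. The tangent space $T_g \CM_{\cc}(\Sip)$ splits (one hopes) as the range of the linearized gauge action plus a finite-dimensional complement; exponentiating the complement inside $\CM_{\cc}(\Sip)$ and checking transversality to orbits via an inverse-function-theorem argument should give the result. Let me think about the details.

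First, I need the infinitesimal picture. The $\Diff(\Sip)$-action at $g$ is generated by Lie derivatives $\calL_X g$ for vector fields $X$ with appropriate decay. Splitting a symmetric 2-tensor into its trace part and trace-free part relative to $g$, the trace-free part of $\calL_X g$ is $2\,\delta^* X$ where $\delta^*$ is the conformal Killing operator. On a constant-curvature surface the trace-free transverse 2-tensors are (real parts of) holomorphic quadratic differentials, and the cokernel of $\delta^*$ is exactly this space. So the tangent space to $\CM_{\cc}(\Sip)$ at $g$ should decompose as "Lie derivative directions" $\oplus$ "trace-free holomorphic quadratic differentials" $\oplus$ "conformal directions preserving constant curvature" — and the dimension count $6\gamma - 6 + 3k$ is the real dimension of holomorphic quadratic differentials with at worst simple poles at $\frakp$ ($6\gamma - 6 + 2k$) plus the $k$-dimensional conformal-rescaling freedom from the $\RR^k$-action (Theorem 3). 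That bookkeeping is essentially what makes the number come out right.

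The actual construction: let $N$ be a finite-dimensional subspace of $T_g\CM_{\cc}(\Sip)$ complementary to the image of the linearized gauge map, chosen using the conic elliptic theory (this is where Theorem 2 — that $\CM_{\cc}$ is a Banach submanifold — and the Fredholm properties of the linearized constant-curvature operator on weighted Hölder spaces enter). Define $\Psi : \Diff(\Sip) \times N \to \CM_{\cc}(\Sip)$ by $(F, h) \mapsto F^*(g + h)$, suitably interpreted (e.g. exponentiate $h$ within the submanifold, or use that $\CM_{\cc}$ is locally graphical). Its differential at $(\id, 0)$ is $(X, h) \mapsto \calL_X g + h$, which by the choice of $N$ is an isomorphism onto $T_g \CM_{\cc}(\Sip)$; the implicit/inverse function theorem on Banach manifolds then makes $\Psi$ a local diffeomorphism, and $\calS := \Psi(\{\id\} \times N)$ is the desired slice.

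\textbf{The main obstacle} will be the analytic input underlying "choose $N$ complementary to the range of $\delta^*$", i.e. proving that $\delta^*$ (equivalently the linearized Einstein-type operator) acting between the right weighted Hölder spaces is Fredholm with the expected kernel and cokernel. This requires the conic elliptic calculus: analyzing the indicial roots of $\delta^*$ at each cone point, checking that the chosen weight $\nu$ avoids indicial roots so that the operator is semi-Fredholm, and identifying the cokernel with holomorphic quadratic differentials having at most simple poles — the pole order being dictated precisely by which indicial roots lie below the weight. A secondary subtlety is that $\Diff(\Sip)$ with the prescribed decay near $\frakp$ must be shown to be a genuine Banach Lie group acting smoothly, and that the exponential map (or the local chart on $\CM_{\cc}$) is smooth in the conic Hölder topology; these are the kinds of "precise statements with regularity assumptions" the authors defer to §6 and §8, so I would set up the functional-analytic framework carefully before attempting the transversality argument.
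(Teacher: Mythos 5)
Your overall strategy --- an explicit finite-dimensional slice transverse to the gauge orbits, verified by the inverse function theorem and underpinned by Fredholm theory for the conic operators in weighted H\"older spaces --- is the same as the paper's, but two of your structural choices would break the argument as written. First, you attribute the extra $k$ dimensions (beyond $\dim \Stt^\sing = 6\gamma-6+2k$) to the $\RR^k$-action by localized conformal rescalings $e^{2\sum_j \chi_j\lambda_j}g$. Those directions are not tangent to $\CM_{\cc}(\Sip)$: multiplying by a non-constant conformal factor destroys constant curvature, and indeed in the paper's slice the corresponding infinitesimal parameters $\dot a_j$ are \emph{determined} by the linearized curvature equation rather than free. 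The extra $k$ free parameters are instead infinitesimal changes of the cone angles, realized by the pure-trace deformations $\chi_j \log r \cdot g$ coming from the double indicial root $0$ of the scalar Laplacian (\S 5.2, item i)). A slice built from quadratic differentials plus localized rescalings would neither lie in $\CM_{\cc}(\Sip)$ nor sweep out, under $\Diff(\Sip)$, the nearby constant curvature metrics with perturbed cone angles, so the map $\Diff(\Sip)\times\calS \to \CM_{\cc}(\Sip)$ could not be a local diffeomorphism onto a neighborhood of $g$.

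Second, the elements of $\Stt^\sing$ cannot simply be added to $g$: a meromorphic quadratic differential with a simple pole has $|\kappa|_g \sim r^{-2+\frac{1}{1+\be}}$, which for $\be \ge -\tfrac12$ is unbounded and in particular not in $r^\nu \calC^{\ell,\delta}_b$, so $g+\kappa$ is not a conic metric of the class under consideration. The paper's fix is the regularization map $R(\kappa)=\kappa-\calD^{g}\bigl(\sum_j \chi_j \omega_0^{(j)} r^{-1+\frac{1}{1+\be_j}}\bigr)$, which subtracts the image under the conformal Killing operator of an indicial solution of $P^{g}$ (Lemma~\ref{lem:iso} shows this cancels exactly the leading singularity); the subtracted term is later reabsorbed as a gauge term in the flowbox decomposition $k=\sigma g + R(\kappa) + (\delta^{g})^*\omega$. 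This regularization, together with the invertibility of the Friedrichs extension of $P^{g}$ --- which in the case $K_0>0$ requires the first-eigenvalue estimate $\lambda_1\ge 2K_0$ and the suspension rigidity statement, not merely Fredholm theory --- is the genuinely new ingredient relative to Tromba's smooth-case argument, and your proposal does not identify either point.
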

\begin{theorem}
The actions of $\mathrm{Diff}(\Sip)$ and $\mathrm{Diff}_0(\Sip)$ on $\CM_{\cc}(\Sip, \vec\beta)$ are proper and 
proper and free, respectively. 
\end{theorem}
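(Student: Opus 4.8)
The plan is to adapt the classical argument of Ebin and Tromba for the smooth closed case, with two modifications forced by the conic setting: compactness of the underlying surface is replaced by compactness of the metric completion, and interior elliptic estimates are supplemented near $\frakp$ by the weighted H\"older estimates for conic elliptic operators of \S 4. The standing observation throughout is that any isometry between two metrics in $\CM_\cc(\Sip,\vec\beta)$ must carry $\frakp$ to itself, permuting the cone points within each subset of equal cone angles, and that such an isometry is smooth on $\Sip$ (Myers--Steenrod) and, by the conic structure, polyhomogeneous up to each $p_j$; in particular it extends to a metric-space isometry of the compact completion $(\Si,g)$.

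\emph{Freeness of the $\mathrm{Diff}_0(\Sip)$-action.} Suppose $F\in\mathrm{Diff}_0(\Sip)$ and $F^*g=g$ for some $g\in\CM_\cc(\Sip,\vec\beta)$; then $F$ is an isometry of $(\Sip,g)$. Since $\vec\beta\in(-1,0)^k$, the completion $(\Si,g)$ is an Alexandrov space with curvature bounded below, so its isometry group is a compact Lie group; its identity component, if nontrivial, is a circle of rotations, and (recall $k\ge1$) these do not satisfy the decay conditions built into $\mathrm{Diff}_0(\Sip)$. Hence $F$ has finite order. Being isotopic to the identity and fixing $\frakp$, $F$ acts trivially on $\pi_1(\Sip)$, and one reduces to the standard fact that a periodic element of $\mathrm{Diff}(\Sip)$ trivial on $\pi_1$ and isotopic to the identity is the identity; together with the rigidity of isometries of constant-curvature surfaces (an isometry is determined by its $1$-jet at a point), this gives $F=\id$.

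\emph{Properness.} Let $g_n\to g$ and $h_n:=F_n^*g_n\to h$ in $\CM_\cc(\Sip,\vec\beta)$ with $F_n\in\mathrm{Diff}(\Sip)$; we must extract a subsequence of $\{F_n\}$ converging in $\mathrm{Diff}(\Sip)$. After passing to a subsequence, the permutation of $\frakp$ induced by $F_n$ is independent of $n$. Because the convergences $g_n\to g$, $h_n\to h$ take place in the weighted H\"older topology of \S 4, the metrics $g_n$ (resp.\ $h_n$) are uniformly bi-Lipschitz to $g$ (resp.\ $h$) for $n$ large, so both families have uniformly controlled geometry on compact subsets of $\Sip$ and on conic neighborhoods of $\frakp$. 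The maps $F_n:(\Sip,h_n)\to(\Sip,g_n)$ are isometries, hence distance-preserving and equicontinuous on the compact completion, so by Arzel\`a--Ascoli a subsequence converges uniformly to some $F$. On compact subsets of $\Sip$, the regularity theory for isometries (Myers--Steenrod, Calabi--Hartman) together with the uniform geometric control upgrades this to convergence in $C^\infty_{\loc}(\Sip)$; near each $p_j$, the weighted H\"older estimates for conic operators of \S 4 give the matching convergence, using that $F_n$ carries a conic neighborhood of $p_j$ to one of the same cone angle and is, to leading order, a rotation. Since $F_n$ and $F_n^{-1}$ are isometries between metrics with uniform two-sided bounds, $F$ is bi-Lipschitz and a local diffeomorphism; as the $F_n$ have degree $\pm1$, so does $F$, so $F$ is an isometry $(\Sip,h)\to(\Sip,g)$, hence a diffeomorphism of the required regularity, and $F^*g=h$ by continuity of pullback. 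Thus $F\in\mathrm{Diff}(\Sip)$ and the action is proper; restricting to $\mathrm{Diff}_0(\Sip)$ gives properness there as well.

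\emph{The main obstacle.} Everything outside a neighborhood of $\frakp$ is essentially Tromba's argument. The genuinely new difficulty is at the cone points: one must rule out the isometries $F_n$ degenerating or ``escaping'' toward $\frakp$, and must show that the limit $F$ remains of conic type with the prescribed weighted regularity. This is exactly the point at which the elliptic theory for conic operators of \S 4 is indispensable, and it is also where the restriction $\vec\beta\in(-1,0)^k$ is used, both there and, via the lower Alexandrov curvature bound, in the freeness argument.
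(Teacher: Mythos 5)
Your overall architecture for properness (Arzel\`a--Ascoli on the metric completion, then elliptic bootstrapping of the isometries) matches the paper's, but both halves of the proposal have genuine gaps at exactly the points that carry the weight of the proof. For properness, the step ``near each $p_j$, the weighted H\"older estimates for conic operators of \S 4 give the matching convergence'' is an assertion, not an argument: the isometry condition $F_n^*g_n=g_n'$ is a first-order, fully nonlinear system to which the linear conic theory of \S 4 does not apply directly, and the away-from-$\frakp$ trick of reading off second derivatives from Christoffel symbols loses control precisely where the metrics degenerate. The paper's resolution is to pass to the conformal representations $g_n=H_n^*(e^{2\phi_n}g_0)$ of Proposition~\ref{confparam} and observe that the conjugated maps $\tilde F_n$ are \emph{harmonic maps} between conic surfaces (a conformally invariant condition), so that the refined regularity theory of Gell-Redman \cite{Gell-Redman} applies to the quasilinear elliptic system $L_nv_n=f_n+Q(v_n)$ for $\tilde F_n(z)=z+v_n(z)$ and yields convergence in $r^{\nu+1}\calC^{\ell+1,\delta}_b$. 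This is the missing idea; you correctly flag the cone points as ``the main obstacle'' but do not supply the mechanism.

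The freeness argument contains an outright false step. You claim that a circle of rotations ``does not satisfy the decay conditions built into $\mathrm{Diff}_0(\Sip)$,'' but the gauge group $\Diff_b^{\ell+1,\delta,\nu+1}(\Sip)$ is \emph{defined} to contain $\exp\bigl(\sum_j\chi_j(a_jr\del_r+b_j\del_y)+\tilde X\bigr)$, i.e., dilations and rotations near each cone point are explicitly admitted (this is essential elsewhere in the paper, e.g.\ in Theorem~\ref{thm:flowbox}). So your reduction to the finite-order case collapses, and the subsequent appeal to ``a periodic element trivial on $\pi_1$ and isotopic to the identity is the identity'' is close to assuming what is to be proved. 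The paper's argument is different and cleaner: normalize $g$ to the model form near each $p_j$, so that $F$ extends to $\bar F\in\Diff(\Si)$ isotopic to the identity, fixing each $p_j$ and acting there as a rotation; if $F\neq\id$ then, $F$ being an isometry, its fixed points are isolated and each satisfies $\det(\id-d\bar F(p))>0$, whence the Lefschetz number satisfies $L(\bar F)\geq k$, contradicting $L(\bar F)=\chi(\Si)=2-2\gamma<k$. Note that this is where the hypothesis $\chi(\Sip)<0$ (absent from your argument) actually enters; without it the statement fails, as the rotations of the two-cone-point spherical suspension show.
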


When the cone angle is zero, the metrics are complete finite area and hyperbolic, and the analogues of these
results are classical.   Some results in this direction, along the lines of the approach in \cite{Tro}, are
contained in \cite{Zeit}.  It turns out to be quite simple to extend the analysis to that case, and we prove the
\begin{theorem}
The direct analogues of Theorems 2, 3 and 4 hold for asymptotically cusp metrics on $\Sip$, with $\CM_\cc(\Sip; \vec\beta)$
replaced by the space of complete, finite area hyperbolic metrics on the punctured surface.
\end{theorem}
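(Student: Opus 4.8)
The plan is to rerun the analysis of \S6 and \S8 with the conic model at each puncture replaced by the hyperbolic cusp model --- formally the degenerate parameter $\be_j=-1$ --- and with the calculus of elliptic conic operators replaced by the corresponding degenerate (``$b$''-type, cusp) calculus on a surface with cusp ends. The one genuinely new ingredient is the local analysis at a cusp, so I would start there. Near each $p_j$ take the model end $\{t>T\}\times S^1_y$, $y\in\RR/2\pi\ZZ$, with metric $dt^2+e^{-2t}\,dy^2$, and the weighted H\"older scales $e^{\delta t}\calC^{m,\al}_b$ built from the vector fields $\del_t$ and $e^{t}\del_y$ (equivalently, in the coordinate $x=e^{-t}$, from $x\del_x$ and $x\del_y$). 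All of the geometric operators below are elliptic in this degenerate sense, with normal operators at the cusp that are constant-coefficient in $t$, so invertibility of the normal operator --- hence the Fredholm property for a given weight $\delta$ --- reduces to an elementary ODE computation of indicial roots. Since these ends are complete of finite area, the continuous spectrum of $\Delta_g$ starts at $1/4$, so for $\delta$ near $0$ the weights avoid both the indicial roots and the spectral threshold, and one recovers the same Fredholm and regularity theory as in the conic case.

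For the analogue of Theorem~2 (uniformization, with smooth dependence on the metric), the linearized Liouville equation is governed by $\Delta_g+2$, where $\Delta_g\geq 0$ is the Laplacian of a complete finite-area hyperbolic metric. Since $\spec(\Delta_g+2)\subset[2,\infty)$, this operator is an \emph{isomorphism} between the corresponding weighted H\"older spaces for $\delta$ near $0$, and the implicit function theorem shows that $(\phi,g)\mapsto e^{2\phi}g$ is a diffeomorphism from the space of weighted conformal factors times the space of complete finite-area hyperbolic metrics onto the asymptotically cusp metrics. This is in fact strictly easier than its conic counterpart, because $0$ is not an indicial root of $\Delta_g+2$ at the cusp, so there is no straddling to do; correspondingly, the localized rescalings $\RR^k$ of Theorem~2 are modified --- imposing $K\equiv -1$ (the natural meaning of ``complete finite-area hyperbolic'') removes them entirely, while letting the curvature float reinstates at most a single overall scale --- with the precise class of admissible metrics fixed in \S8.

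For the analogue of Theorem~3, fix a smooth complete finite-area hyperbolic $g$ and linearize the $\Diff(\Sip)$-action, i.e.\ study $X\mapsto\calL_Xg$, or, after gauge fixing, $\delta_g\delta_g^*$ on suitably decaying vector fields. By the local analysis this operator is Fredholm between the weighted scales, and the degenerate calculus identifies its cokernel with the trace-free, divergence-free symmetric $2$-tensors that are bounded with respect to $g$, equivalently with the holomorphic quadratic differentials on $\Si$ having at most simple poles at $\frakp$; this space has real dimension $6\gamma-6+2k$. Exponentiating a complement to the range, exactly as in \S8, yields an embedded $(6\gamma-6+2k)$-dimensional submanifold $\calS$ through $g$ such that $(F,g)\mapsto F^*g$ from $\Diff(\Sip)\times\calS$ to the space of complete finite-area hyperbolic metrics is a local diffeomorphism; the cone-angle directions of Theorem~3 are absent here, so the dimension of the slice drops from $6\gamma-6+3k$ to $6\gamma-6+2k=\dim\calT_{\gamma,k}$. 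The analogue of Theorem~4 then follows as in \S8: freeness of the $\Diff_0(\Sip)$-action holds because an isometry of a complete hyperbolic surface has finite order, and a finite-order mapping class of $(\Si,\frakp)$ isotopic to the identity is trivial; properness follows from the Ebin--Palais argument --- the uniform curvature bound, together with the uniformly modeled geometry of the thick part (Margulis) and of the cusp thin parts, forces precompactness of the relevant families of diffeomorphisms.

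The main obstacle lies entirely in the first step: one must set up the weighted H\"older calculus at the cusp with the weight $\delta$ confined to the interval between the relevant indicial roots (and below the spectral threshold), and check that the conformal Laplacian $\Delta_g+2$ and the operator $\delta_g^*$ are genuinely elliptic in the cusp sense on these scales, i.e.\ that their normal operators at each cusp are invertible. This is routine but must be carried out carefully; once it is in place, every step of \S6 and \S8 transfers essentially word for word, which is why --- as asserted --- the extension to the asymptotically cusp case is straightforward.
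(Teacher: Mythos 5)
Your overall strategy is the same as the paper's (\S 7): replace the conic model by the cusp model, redo the linear theory for $\Delta^g+2$ and $P^g$ at the cusp, and then rerun the slice/flowbox arguments of \S 6; your conclusions about the dimension count $6\gamma-6+2k$, the identification of the cokernel with quadratic differentials having at most simple poles, the disappearance of the $\RR^k$ of localized rescalings, and the freeness/properness arguments all match. The genuine gap is in the one step you yourself flag as the crux: the linear analysis at the cusp. You propose a single isotropic weighted scale $e^{\delta t}\calC^{m,\alpha}_b$ and assert that the operators are ``$b$-type'' with normal operators that are \emph{constant-coefficient in $t$}, so that Fredholmness reduces to an indicial-root computation. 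This is not correct for a cusp end. Writing $g_c=dt^2+e^{-2t}dy^2$, the Laplacian contains the term $e^{2t}\del_y^2$, whose coefficient grows exponentially in the arclength variable; the end is collapsing, the operator is not translation-invariant in $t$ modulo decaying errors, and no indicial equation governs the nonzero Fourier modes. The indicial roots $-2$ and $1$ you would compute control only the rotationally invariant mode; on the mode $e^{ijy}$, $j\neq 0$, the reduced operator is $-\del_t^2+\del_t+j^2e^{2t}+2$, whose decaying solutions behave like $e^{-|j|e^{t}}=|z|^{|j|}$, i.e.\ incomparably faster than any weight $e^{-\nu t}$. Consequently the mapping and Fredholm properties of $\Delta^g+2$ are genuinely anisotropic in the Fourier decomposition, and cannot be read off from a $b$-calculus on your proposed spaces.

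This is precisely why the paper introduces the spaces $\calC^{\ell,\delta,\nu}_{\hc}$, which impose the polynomial-in-$r$ (i.e.\ $e^{-\nu t}$) decay only on the zero mode $v_0$ and require $e^{-c/r}$ (i.e.\ $|z|^{c}$) decay on $v_\perp$, and why Proposition~\ref{cuspfr} is proved by hand: an Euler-type ODE analysis on the zero mode (yielding the window $\nu\in(-2,1)$, exactly the interval you identify), together with a supersolution/barrier argument giving decay estimates that are \emph{uniform in the mode number} $j$, so that the mode-by-mode solutions can be resummed. Your plan omits this uniform control, and without it neither the isomorphism statement for $\Delta^g+2$ nor the corresponding solvability of $P^g\omega=B^gh^0$ needed for the flowbox is established. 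Everything downstream of this point in your argument is fine once the linear theory is set up on the correct anisotropic scale, so the fix is localized: replace the isotropic $e^{\delta t}\calC^{m,\alpha}_b$ by the split spaces and supply the uniform barrier estimate on the nonzero modes.
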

There is a further relationship between these two settings since conic constant curvature metrics 
converge to complete finite area hyperbolic metrics as the cone angle parameters $\beta_j \searrow -1$. 
Thus $\CM_{\cc}(\Sip)$ interpolates between the unpunctured and punctured Teichm\"uller spaces, $\calT_\gamma$ 
and $\calT_{\gamma,k}$, and hence may be useful in comparing quantities between these two spaces.  We have in 
mind the Weil-Petersson metrics, which have already been studied in the conic setting by Schumacher and 
Trapani \cite{ST}, and the determinant functional $g \mapsto \det' \Delta_g$, which is analyzed for conic metrics 
by Kokotov \cite{Kok}.  We shall return to this circle of questions elsewhere. 

A technical result gives the sharpest regularity of the optimal representatives of elements in the 
conic Teichm\"uller space. 
\begin{theorem}
Let $g$ be a polyhomogeneous conic metric.  Then every metric in the slice $\calS$ based at $g$
is also polyhomogeneous. 
\end{theorem}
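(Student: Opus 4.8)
The plan is to realize the slice $\calS$, in a neighbourhood of $g$, as the solution set of a determined nonlinear elliptic conic system whose coefficients are polyhomogeneous as soon as $g$ is, and then to promote the a priori weighted H\"older regularity of its elements to polyhomogeneity by the usual ``conormal implies polyhomogeneous'' mechanism of the conic calculus, the one extra point being the bookkeeping forced by the nonlinearity.

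First I would write down the defining system. From the construction behind Theorem 3, a metric $g'$ near $g$ lies on $\calS$ exactly when two conditions hold relative to the fixed background $g$: the metric $g'$ has constant Gauss curvature $c'$ (a fixed real number, pinned down by $g'$ through Gauss--Bonnet), and $g'-g$ satisfies the linear gauge condition $\delta_g(g'-g)=0$. Setting $g' = g+h$, these read $\Phi(h) := (K_{g+h}-c',\, \delta_g h) = 0$; this $\Phi$ is the divergence-gauge version of the curvature map in the sense of Tromba \cite{Tro}, a determined operator on symmetric $2$-tensors which is elliptic in the conic calculus (on symbols the linearized Gauss-curvature operator accounts for the one direction not detected by $\delta_g$). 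Every coefficient of $\Phi$, and of its linearization $D\Phi|_0$, is a universal algebraic expression in $g$, $\nabla^g$ and $(\nabla^g)^2$, hence polyhomogeneous on $\Sip$ once $g$ is, and the indicial roots of $\Phi$ at each $p_j$ are then explicit functions of $\be_j$. I would also note that $T_g\calS \cong \ker D\Phi|_0$ already consists of polyhomogeneous tensors -- real parts of meromorphic quadratic differentials, infinitesimal conformal factors, and the cone-angle deformations, the last carrying logarithmic terms from $\partial_{\be_j}$ of the model cone -- being the kernel of a linear conic-elliptic operator with polyhomogeneous coefficients.

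Next I would invoke the a priori regularity coming with the slice: by the implicit function theorem used to produce $\calS$, every $g' \in \calS$ near $g$ has the form $g' = g + h_0 + h$ with $h_0 \in T_g\calS$ polyhomogeneous and $h \in r^\nu\calC^{2,\alpha}_b(\Sip)$ solving $\Phi(h_0+h)=0$. The upgrade of $h$ then proceeds in two steps. Step one: interior conic elliptic estimates and a Schauder iteration on the weighted spaces $r^\nu\calC^{k,\alpha}_b$ promote $h$ to a conormal tensor, i.e.\ one remaining in $r^\nu\calC^{0,\alpha}$ after arbitrarily many applications of the $b$-vector fields $\calV_b$. Step two: rewrite the equation as $D\Phi|_0\,h = -Q(h_0+h)$, where $Q$ collects the terms of order $\geq 2$; since products, $\calV_b$-derivatives, powers, and analytic functions of polyhomogeneous functions with positive leading exponent (e.g.\ the exponential and $(\det(g+\cdot))^{-1}$ appearing in $Q$) are again polyhomogeneous, the right-hand side is polyhomogeneous as soon as $h$ is. Feeding it through the generalized inverse of $D\Phi|_0$ -- a conic pseudodifferential operator sending polyhomogeneous functions to polyhomogeneous functions with index set shifted according to the indicial roots of $\Phi$ -- and iterating, one reads off the full asymptotic expansion of $h$ near each $p_j$ term by term, so that $g' = g+h_0+h$ is polyhomogeneous.

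The one genuine difficulty I anticipate is the iteration in step two: one must check that the exponents produced by repeatedly inverting $D\Phi|_0$ against the nonlinear remainder $Q$ do not escape to infinity, i.e.\ that the indicial roots of $\Phi$, the a priori weight $\nu$, and the exponents newly created by multiplying and composing polyhomogeneous terms interlock so that the scheme closes and yields a genuine polyhomogeneous expansion, including the logarithmic terms that appear intrinsically from coincidences among indicial roots as $\vec\be$ varies. Granted the linear conic elliptic theory developed earlier -- discreteness of the indicial roots, the mapping properties of generalized inverses, and their preservation of polyhomogeneity -- this is precisely the index-set bookkeeping familiar from the $b$- and edge calculus, and it completes the proof.
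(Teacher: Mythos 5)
Your overall strategy --- realize the slice as the solution set of a gauged curvature system with polyhomogeneous coefficients, then bootstrap the a priori weighted H\"older regularity to a full expansion by repeatedly feeding the nonlinear remainder through the (generalized inverse of the) linearization --- is exactly the mechanism the paper uses; the bootstrap itself reduces to Propositions \ref{pr:impreg} and \ref{pr:reg} via the iteration in Proposition \ref{sharpregres}. But there is a genuine gap at the very first step: the slice $\calS_{g_0,\e}$ is \emph{not} cut out by the gauge condition you posit. It is the intersection of $\CM^{\ell,\delta,\nu}_{\cc}$ with the affine subspace $S_{g_0}=\{g_0(a,\eta)+fg_0+R(\kappa)\}$, and the regularized singular TT term $R(\kappa)=\kappa-\calD^{g_0}\bigl(\sum_j\chi_j\omega_0^{(j)}r^{-1+\frac{1}{1+\be_j}}\bigr)$ satisfies neither $\delta^{g_0}R(\kappa)=0$ nor $B^{g_0}R(\kappa)=0$; rather $B^{g_0}R(\kappa)=-P^{g_0}\bigl(\sum_j\chi_j\omega_0^{(j)}r^{-1+\frac{1}{1+\be_j}}\bigr)$, which is $\calO(r^{-2+\frac{1}{1+\be_j}})$ and in general nonzero. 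This is unavoidable: for $\be_j\geq-1/2$ the meromorphic quadratic differentials are too singular to lie in $\CM^{\ell,\delta,\nu}$ at all, which is the entire reason the regularization map $R$ exists; a literal divergence-gauge (or Bianchi-gauge) slice would not contain these deformations. So the system $\Phi(h)=0$ you write down is simply not satisfied by the metrics whose regularity you want to upgrade, and the bootstrap has nothing to act on.

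The missing idea is to control the \emph{failure} of the gauge condition. The paper computes, for $g=g_0(a,\eta)+fg_0+R(\kappa)\in\calS_{g_0,\e}$, that $N^{g_0}(g)=-(\delta^g)^*P^{g_0}\bigl(\sum_j\chi_j\omega_0^{(j)}r^{-1+\frac{1}{1+\be_j}}\bigr)$ (using $K^g=K_0$ and the fact that $B^{g_0}$ annihilates pure trace and genuine TT tensors), and then observes that one is free to choose the cutoff extensions of the indicial solutions $\omega_0^{(j)}r^{-1+\frac{1}{1+\be_j}}$ to be annihilated to all orders by $P^{g_0}$. With that choice $N^{g_0}(g)=\eta$ with $\eta$ smooth and vanishing to all orders at each $p_j$, and from there your two-step bootstrap (conormality first, then term-by-term construction of the expansion with the index-set bookkeeping you describe) goes through essentially verbatim. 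A secondary difference, cosmetic by comparison: the paper rolls curvature and gauge into the single second-order quasilinear operator $N^{g_0}$ rather than your mixed-order Douglis--Nirenberg pair $(K^{g+h}-c',\delta^gh)$; both are elliptic in the conic calculus, but the Bianchi gauge is what makes the residual gauge error computable as $-(\delta^g)^*P^{g_0}(\cdots)$ above and hence removable.
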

A function is said to be polyhomogeneous at a conic point if, in polar coordinates around that point, it has a 
complete asymptotic expansion with smooth coefficients; a metric is polyhomogeneous if its coefficients
have this property, or equivalently, if it is conformal with a polyhomogeneous factor, to a background
smooth metric, see \S 4.2.   This type of regularity is particularly useful in many calculations. 

There is now a growing body of work on conic metrics on surfaces and their analogues in higher dimensions.
We mention in particular a Ricci-flow approach to the existence of conic metrics by the first author,
Rubinstein and Sesum \cite{MRS}, cf.\ also \cite{PSSW}, and the existence of K\"ahler-Einstein edge metrics in 
higher dimensions \cite{JMR}. These are simply the papers closest in spirit to the present one, and we do 
not attempt to list any further works in the increasingly large literature in this area. 

The plan of this paper is as follows: we begin in the next section by defining $\CM(\Sip)$ and discussing the basic 
geometric properties of conic metrics. This is followed by a review of the global analytic framework used here, i.e., the curvature 
function and its linearization, the Bianchi gauge, the gauged curvature equation, etc., and the general theory of elliptic conic 
operators. In \S 5 we calculate the indicial roots for the relevant operators, and prove the main results in \S 6; the case
of zero cone angle (complete hyperbolic metrics) is handled in \S 7. Finally in \S 8 we describe certain global features, including
the transition interfaces as the constant curvature changes from negative to positive. 

\medskip

\noindent{\bf Acknowledgements:}  This paper was started several years ago during an extended visit by the second author
to Stanford University, funded by the DFG. The authors also wish to acknowledge the hospitality of several locations where
the work progressed, including the Mathematisches Forschungsinstitut Oberwolfach, the American Institute of Mathematics 
and most recently, the Isaac Newton Institute.  The first author was supported in the later stages of this work by NSF DMS-1105050.  
We profited from discussing these and related issues with many people over the years. 

\section{Geometry of conic metrics}
This section collects a number of geometric facts about the class of conic metrics, focusing on those with cone angles less than $2\pi$. 

\subsection{The models}
We frequently refer to the model two-dimensional conic metrics with constant curvature $K$ and cone parameter $\be$: 
\begin{equation}
g_{\be,K} = \begin{cases} dr^2 + (1+ \be)^2|K|^{-1} \sinh^2 |K|^{1/2} r \, dy^2, \quad & K < 0, \\
dr^2 + (1+ \be)^2 dy^2, \quad & K = 0 \\  dr^2 + (1+ \be)^2 K^{-1} \sin^2 (K^{1/2} r) \, dy^2, \quad & K > 0,
\end{cases}
\label{models1}
\end{equation}
where $y \in S^1 = \RR/2\pi \ZZ$. In each case, the cone angle is $\theta = 2\pi (1 + \be)$.  The conformal version  in the flat case is
\begin{equation}
g_{\be,0} = |z|^{2\beta} |dz|^2,   \quad z \in\CC\cong\RR^2.
\label{eq:confform}
\end{equation}
The corresponding expressions when $K \neq 0$ are more complicated, and not needed later. To see that this is equivalent to \eqref{models1},
write $z = \rho e^{iy}$ and set $r = \rho^{\be+1}/(\be + 1)$.  We remark again that $\theta \in (0,2\pi) \Longleftrightarrow \beta \in (-1,0)$. 

For simplicity, we henceforth mostly write $g_\be$ instead of $g_{\be,0}$. 

\subsection{The space of  conic metrics}
We next generalize these models and describe the corresponding local expressions for general conic metrics in two dimensions.

Let $\calU$ be the unit ball, and suppose that $z, \rho, y$ and $r$ have the same meanings as above. Fix a weight parameter 
$\nu > 0$. A conic metric of order $\nu$ in $\calU$ is one which takes either of the equivalent forms
\begin{equation}
e^{2\phi}|z|^{2\beta} F^* |dz|^2, \ \ \mbox{or}\ \ dr^2 + (1+\be)^2 r^2\, dy^2 + h, 
\label{localcm}
\end{equation}
where $F = \exp(X)$ is a diffeomorphism, $h = a \, dr^2 + 2b \, rdrdy + c \, r^2 dy^2$, and $X, \phi, a, b, c$ all decay like $r^\nu$.
The regularity assumptions are specified precisely in \S 6, and we prove there that these two formulations are equivalent.

Let $\Si$ be a compact surface of genus $\gamma$, with $\frakp = \{p_1, \ldots, p_k\} \subset \Si$ a collection of distinct points. 
Fix a (smooth) background metric $\olg$, which for convenience we assume is flat near each $p_j$. 
Choose a smooth positive function on $\Sip := \Si \setminus \frakp$ which
equals the $\olg$-distance near each $p_j$, and set $\calU_j = \{q: d_{\olg}(g,p_j) < \e\}$.  We shall also fix a collection
of cutoff functions $\chi_j \in \calC^\infty_0(\calU_j)$ with $\chi_j = 1$ in a neighborhood of $p_j$. 
The space $\CM(\Sip)$ consists of all metrics $g$ on $\Sip$ which can be written as in \eqref{localcm} in each $\calU_j$. 
We omit $\nu$ in this notation, and of course we have not specified the regularity of the coefficients. We refer to \S 6.1 for 
precise statements. 


\subsection{Gauss-Bonnet and Troyanov constraints}\label{Troyanov_region}
The Gauss-Bonnet formula yields a relationship between the cone angles, the cardinality of $\frakp$, the
Euler characteristic of $\Si$ and the total integral of the Gaussian curvature $K^g$. Indeed, applying the 
ordinary Gauss-Bonnet formula on $\Sip \setminus \cup_j B_\e(p_j)$ and letting $\e \to 0$ yields 
\begin{equation}
\frac{1}{2\pi}\int_{\Sip} K^g\, dA^g = \chi(\Sip,\vec\be)
\label{eq:GB1}
\end{equation}
where the right hand side was defined in \eqref{cec}. In particular, if $g$ has constant curvature $\K$, then 
\begin{equation}
\K \, \times \mbox{Area}(\Sip,g) = 2\pi \chi(\Sip,\vec\be), 
\label{eq:GB2}
\end{equation}
so $\chi(\Sip,\vec\be)$ has the same sign as $\K$. As noted earlier, \eqref{eq:GB2} is also sufficient for 
existence in a given conformal class when $\K \leq 0$, but the additional Troyanov condition \eqref{Troy} is needed
when $\K > 0$. 

We now show the necessity of \eqref{Troy} for the existence of spherical cone metrics, but see \cite{LT} and \cite{MonP} for 
alternative proofs (the latter source sets this into a much broader context). This argument requires the construction of the 
Dirichlet polygon of a conic surface with constant curvature $K>0$ as in \cite{BLP}.
For $\theta \in (0,2\pi)$, let $\mathbb{M}^2_{K}(\theta)$ denote the spherical suspension over the circle of length $2\pi\theta$.
Now, if $(\Si,g)$ is conic with curvature $K > 0$, then for each point $p \in \Sigma$, singular or not, let $\theta_p$ denote its
cone angle. There is a star-shaped region $\mathcal{E}_{p} \subset \mathbb M^2_K(\theta_p)$ and an exponential map
$$
\exp_p : \mathcal{E}_p \rightarrow \Sigma, 
$$
which sends the `pole' $o \in \mathbb M^2_K(\theta_p)$ to $p$; these are determined as follows. Choose an isometric
identification of a neighborhood of $o$ in $\mathbb M^2_K(\theta_p)$ with a neighbourhood of $p \in \Si$, and declare that
$v \in \mathcal{E}_p$ if and only if there exists a geodesic segment $[p,x]$ in $\Sigma$ corresponding to $[o,v]$ in the sense 
that both segments have the same length and coincide via the identification above in the neighborhoods of $o$ and $p$.
We write $x = \exp_p(v)$ when $v \in \mathcal{E}_p$.   The Dirichlet polygon $\calD_p \subset \calE_p$ consists of all $v \in \calE_p$ 
such that the corresponding segment $[p,x]$ in $\Sigma$ is {\em minimizing}. It is shown in \cite{BLP} that $\calD_p$ is a polygon 
in $\mathbb{M}^2_K(\theta_p)$ and $\Sigma$ is obtained from $\calD$ by face identifications.
\begin{proposition}
If $k \geq 3$, and $\Si = S^2$ and there exists a spherical cone metric $g$ on $\Sip$ with cone angle parameters $\vec\be \in (-1,0)^k$,
then necessarily $\vec\be \in \calF$. 
\label{troyanov-suff}
\end{proposition}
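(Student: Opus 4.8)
The plan is to use the Dirichlet polygon description recalled just above to extract area and angle inequalities, and then sum them using Gauss--Bonnet. First I would fix a spherical cone metric $g$ on $\Sip = S^2 \setminus \frakp$ with cone parameters $\vec\beta \in (-1,0)^k$, and for a given index $j$ consider the Dirichlet polygon $\calD_{p_j} \subset \mathbb{M}^2_K(\theta_{p_j})$ based at $p_j$. Since $\Sigma = S^2$ is obtained from $\calD_{p_j}$ by face identifications, every other cone point $p_i$, $i \neq j$, appears as (the image of) at least one vertex of $\calD_{p_j}$, and the smooth points of $\Sigma$ contribute vertices with interior angles summing appropriately. The key geometric input from \cite{BLP} is that $\calD_{p_j}$ is a geodesic polygon in the spherical suspension $\mathbb{M}^2_K(\theta_{p_j})$ whose pole $o$ has cone angle $\theta_{p_j} = 2\pi(1+\beta_j)$, and that the total angle of $\Sigma$ at each $p_i$ ($i\ne j$) is recovered by adding up the interior angles of $\calD_{p_j}$ at the preimages of $p_i$.

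Next I would apply the Gauss--Bonnet formula to the geodesic polygon $\calD_{p_j}$ on the constant-curvature-$K$ suspension. If $\calD_{p_j}$ has vertices with interior angles $\alpha_1, \ldots, \alpha_N$, then
\begin{equation}
K \cdot \mathrm{Area}(\calD_{p_j}) = \sum_{m=1}^N (\pi - \alpha_m) - (2\pi - \theta_{p_j}),
\label{eq:GBpoly}
\end{equation}
the last term being the angle defect at the pole $o$ relative to a flat disk, i.e.\ $\theta_{p_j} - 2\pi$ entering with the conical correction. Grouping the vertices: those mapping to a cone point $p_i$ ($i \neq j$) have angles summing to exactly $\theta_{p_i} = 2\pi(1+\beta_i)$, while vertices mapping to smooth points of $\Sigma$ have angles summing, around each such smooth image point, to $2\pi$ (so they contribute nonpositively to $\sum(\pi - \alpha_m)$, since a point of the polygon boundary hitting a smooth $2\pi$ cone is ``straight'' in aggregate). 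Using $\mathrm{Area}(\calD_{p_j}) = \mathrm{Area}(\Sigma,g) > 0$ and \eqref{eq:GB2} (which gives $K\cdot\mathrm{Area}(\Sigma,g) = 2\pi\chi(S^2,\vec\beta) = 2\pi(2 + \sum_i \beta_i)$), I would rearrange \eqref{eq:GBpoly} to isolate the contribution of the $p_i$, $i\neq j$, and obtain a bound of the shape $\sum_{i \neq j}\beta_i < \beta_j$ after cancelling the common $2\pi(2+\sum_i\beta_i)$ term against the pole defect and the vertex-count. The strict inequality comes from the area being strictly positive (equivalently, $\calD_{p_j}$ is a genuine nondegenerate polygon, not a single point), so that $K>0$ forces the angle deficit to be strictly positive.

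The main obstacle I anticipate is the bookkeeping at the \emph{smooth} vertices of $\calD_{p_j}$: these are the points where the boundary of the Dirichlet polygon is cut, and one must verify that the contribution of a family of polygon vertices all mapping to one smooth point of $\Sigma$ is exactly $2\pi$ in total (so their net contribution to $\sum(\pi-\alpha_m)$ is $\le 0$, with the precise value controlled), and similarly that each cone point $p_i$ with $i\neq j$ really does contribute its full angle $2\pi(1+\beta_i)$ and is not somehow split across identified faces in a way that under- or over-counts. This is a careful but essentially combinatorial argument using the face-identification structure from \cite{BLP}; once the angle sums are pinned down, the final inequality $\vec\beta\in\calF$ drops out of Gauss--Bonnet as above. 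One should also note the degenerate possibility that some $p_i$ is not visible as a vertex (e.g.\ if it lies in the interior of a face after identification), but by the minimality defining $\calD_{p_j}$ every cone point other than the basepoint must lie on the polygon boundary, so this does not occur.
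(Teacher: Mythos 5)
There is a genuine gap here: the route you propose --- Gauss--Bonnet on the Dirichlet polygon plus careful bookkeeping of the vertex angles --- cannot produce the Troyanov inequality, because once the bookkeeping is done correctly it collapses to a tautology. Indeed, the cut locus of $p_j$ in $S^2$ is a tree with, say, $V$ vertices and $E=V-1$ edges, so $\calD_{p_j}$ is a $2E$-gon; the interior angles at the polygon vertices lying over a point $x$ of $\Sigma$ sum to the total cone angle $\Theta_x$ there ($\theta_i$ if $x=p_i$, $2\pi$ if $x$ is smooth). Feeding this into the (correct) polygon Gauss--Bonnet formula $K\,\mathrm{Area}(\calD_{p_j}) = \theta_j - \sum_m(\pi-\alpha_m)$ gives $K\,\mathrm{Area} = \theta_j - 2\pi(V-1) + \sum_x \Theta_x = 2\pi\bigl(2+\sum_i\beta_i\bigr)$: exactly the global Gauss--Bonnet identity \eqref{eq:GB2}, with no trace of the index $j$ and no inequality. (Two local errors compound this: your formula \eqref{eq:GBpoly} has the sign of $\sum(\pi-\alpha_m)$ reversed relative to Gauss--Bonnet, and a smooth cut-locus vertex of valence $v\ge 2$ contributes $(v-2)\pi\ge 0$, i.e.\ \emph{nonnegatively}, to $\sum(\pi-\alpha_m)$, not nonpositively. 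Also, positivity of the area is not where strictness can come from, since the area drops out of the identity entirely.)

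The missing idea is an \emph{area comparison with the ambient model}, not an angle count. The Dirichlet polygon $\calD_{p_j}$ is by construction a subset of the suspension $\mathbb{M}^2_K(\theta_j)$, which has finite total area $2\theta_j/K$; and the inclusion is proper because $(\Sip,g)$ is not isometric to $\mathbb{M}^2_K(\theta_j)$ when $k\ge 3$. Hence
\[
K\,\mathrm{Area}(\Sip,g) \;=\; K\,\mathrm{Area}(\calD_{p_j}) \;<\; K\,\mathrm{Area}\,\mathbb{M}^2_K(\theta_j) \;=\; 2\theta_j \;=\; 4\pi(1+\beta_j),
\]
which combined with $K\,\mathrm{Area}(\Sip,g)=2\pi(2+\sum_i\beta_i)$ yields $\sum_i\beta_i<2\beta_j$, i.e.\ $\sum_{i\ne j}\beta_i<\beta_j$ for every $j$. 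This one-line comparison is the actual content of the argument; the face-identification combinatorics you were worried about never enters.
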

\begin{proof}
For each $j\in \{1, \ldots, k\}$, consider the Dirichlet polygon $\calD_{p_j} \subset \mathbb{M}^2_K(\theta_j)$. Clearly 
\[
K \mbox{Area}(\Sip,g) = K\mbox{Area}\,\calD_{p_j} < K\mbox{Area}\, \mathbb{M}^2_K(\theta_j)=2\theta_j = 4\pi (1 + \be_j).
\]
This inequality is strict since $(\Sip,g) \neq \mathbb{M}^2_K(\theta_j)$.  On the other hand, by \eqref{eq:GB2},
\[
K \mbox{Area}(\Sip,g) = 2\pi(2+\textstyle\sum_i \beta_i),
\]
so $\sum \be_i < 2 \be_j$ for each $j$, i.e., $\vec\be \in \calF$. 
\end{proof}

We conclude this subsection by examining the geometry of $\calF$ more closely. Define
\begin{align*}
\mathsf{Hyp} &= \{ \vec\beta \in (-1,0)^k: \textstyle\sum_j \beta_j < -2\}\\ 
\mathsf{Euc} &= \{ \vec\beta \in (-1,0)^k: \textstyle\sum_j \beta_j = -2\}\\
\mathsf{Sph} &= \{ \vec\beta \in (-1,0)^k: \textstyle\sum_j \beta_j > -2\} \cap \calF. 
\end{align*}
By Theorem~\ref{fundexist}, there exists a {\em hyperbolic/Euclidean/spherical} cone metric on $\Sip$ with cone 
angle parameters $\vec\be$ if and only if $\vec\beta \in \mathsf{Hyp}/\mathsf{Euc}/\mathsf{Sph}$. Since $\calF$ is described by a set
of linear inequalities, it is the interior of a convex polyhedral cone in $\RR^k$. 
\begin{lemma}
$\mathsf{Sph}$ is the open cone over $\mathsf{Euc}$ with vertex at the origin: $\mathsf{Sph}= 
\{ \lambda \vec\beta: \vec\beta \in \mathsf{Euc}, 0 < \lambda < 1 \}$.
\end{lemma}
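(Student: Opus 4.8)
The plan is to reduce everything to two elementary observations about the cone $\calF$. First, the Troyanov inequalities $\sum_{i\neq j}\beta_i<\beta_j$ defining $\calF$ are homogeneous of degree one — equivalently, after adding $\beta_j$ to both sides, they read $\sum_i\beta_i<2\beta_j$ — so if $\vec\beta\in\calF$ and $\lambda>0$ is such that $\lambda\vec\beta$ still lies in the open cube $(-1,0)^k$, then $\lambda\vec\beta\in\calF$ as well. Second, $\mathsf{Euc}\subseteq\calF$ without any extra hypothesis: when $\sum_j\beta_j=-2$ one has $\sum_i\beta_i=-2<2\beta_j$ precisely because each $\beta_j>-1$. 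Granting these, the lemma is a normalization argument onto the hyperplane $\{\sum_j\beta_j=-2\}$.

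For the inclusion $\{\lambda\vec\beta:\vec\beta\in\mathsf{Euc},\,0<\lambda<1\}\subseteq\mathsf{Sph}$, I would fix $\vec\beta\in\mathsf{Euc}$ and $0<\lambda<1$ and check the three defining conditions of $\mathsf{Sph}$ for $\lambda\vec\beta$: each entry $\lambda\beta_j$ lies in $(-\lambda,0)\subset(-1,0)$; the sum $\lambda\sum_j\beta_j=-2\lambda$ lies in $(-2,0)$, in particular it exceeds $-2$; and $\lambda\vec\beta\in\calF$ by the homogeneity above, using $\vec\beta\in\mathsf{Euc}\subseteq\calF$.

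For the reverse inclusion, given $\vec\gamma\in\mathsf{Sph}$ I would set $\lambda:=-\tfrac12\sum_j\gamma_j$; since all $\gamma_j<0$ and $\sum_j\gamma_j>-2$ this gives $\lambda\in(0,1)$, and I put $\vec\beta:=\lambda^{-1}\vec\gamma$, so that $\sum_j\beta_j=-2$ by construction and $\vec\beta$ inherits the homogeneous Troyanov inequalities from $\vec\gamma$ upon dividing by $\lambda>0$. The only step that actually invokes the hypothesis $\vec\gamma\in\calF$ is the verification that $\vec\beta\in(-1,0)^k$: one has $\beta_j<0$ trivially, while $\beta_j>-1$ is equivalent to $\gamma_j>-\lambda=\tfrac12\sum_i\gamma_i$, i.e.\ to $\sum_{i\neq j}\gamma_i<\gamma_j$, which is exactly the Troyanov condition for $\vec\gamma$. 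Then $\vec\beta\in\mathsf{Euc}$ and $\vec\gamma=\lambda\vec\beta$ with $0<\lambda<1$, completing the proof. There is no genuine obstacle — the argument is bookkeeping — but the point to isolate is this last one: on the Euclidean hyperplane the constraint $\beta_j>-1$ coincides with the Troyanov inequality, which is precisely why normalizing a spherical parameter vector by $-\tfrac12\sum_j\gamma_j$ lands it back inside the cube.
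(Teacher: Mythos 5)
Your proof is correct and takes essentially the same route as the paper's: normalize onto the hyperplane $\sum_j \beta_j = -2$ via $\lambda = -\tfrac12\sum_j\beta_j$ and observe that on that hyperplane the cube constraint $\beta_j > -1$ coincides with the Troyanov inequality. The only difference is cosmetic — you isolate the homogeneity of the Troyanov inequalities and the inclusion $\mathsf{Euc}\subseteq\calF$ as explicit preliminary observations, whereas the paper performs the same computation inline.
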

\begin{proof}
If $\sum_i \beta_i =-2$ and $0 < \lambda < 1$, then $\sum_i\lambda \beta_i = -2\lambda < 2\lambda \beta_j$ for each $j \in \{1, \ldots, k\}$,
i.e.~$\lambda \vec\beta \in \mathsf{Sph}$. Conversely, if $\vec\beta \in \mathsf{Sph}$, then there exists $\lambda \in (0,1)$ 
such that $\sum_i \lambda^{-1}\beta_i = -2$. It remains to check that $\lambda^{-1}\vec\beta \in (-1,0)^k$, but since 
$\vec\be \in \calF$, we have
\[
-2=\sum_{i=1}^k \lambda^{-1}\beta_i < 2 \lambda^{-1}\beta_j,
\]
as desired. 
\end{proof}
This `projection' plays a significant role in \cite{MonP}.

\subsection{Metric spaces with curvature bounds}\label{metric}
In this final part of \S 2, we briefly review basic properties of metric spaces with curvature bounds. The 
relevance is that a conic surface with all cone angles less than $2\pi$ has a lower curvature bound in 
the sense of triangle comparison. A basic reference for this material is \cite{BBI}, but see also \cite{BLP} and \cite{BGP} for more advanced treatments. 

A metric space $(X,d)$ is called a {\em length space} if the distance $d(x,y)$ between any two points $x,y \in X$ is 
equal to the infimum of lengths of continuous curves connecting $x$ and $y$ (which a priori might be infinite). 
A locally minimizing curve is called a geodesic. A geodesic segment $[x,y]$ connecting points $x,y \in X$ is
a minimizing geodesic between these points (this notation is ambiguous since there may be  more than one such 
geodesic segment), and its length is denoted $|xy|$. 
A {\em geodesic length space} is a length space where any two points $x,y \in X$ may be connected by a 
minimizing geodesic. If $(X,d)$ is a complete, locally compact length space, then the singular version of
the Hopf-Rinow theorem implies that $X$ is a geodesic length space.

For any $K_0 \in \RR$, let $\mathbb{M}^2_{K_0}$ be the $2$-dimensional model space with constant curvature
$K_0$. A complete, locally compact length space $X$ has curvature $\geq K_0$ if every point $x \in X$ has a
neighbourhood $U_x$ such that triangles with vertices in $U_x$ are `thicker' than in $\mathbb{M}^2_{K_0}$. This means 
that if $\Delta=[p,q] \cup [q,r] \cup [r,p]$ with $p,q,r \in U_x$ and $\bar{\Delta}=[\bar{p},\bar{q}] \cup [\bar{q},\bar{r}] 
\cup [\bar{r},\bar{p}]$ is a comparison triangle in  $\mathbb{M}^2_{K_0}$ with $|pq| = |\bar{p} \bar{q}|$,
$|pr| = |\bar{p}\bar{r}|$, then $|ps| \geq |\bar{p}\bar{s}|$ for any $s \in [q,r]$ where $\bar{s} \in [\bar{q},\bar{r}]$ is 
the point corresponding to $s$. If $X$ is $1$-dimensional and $K_0>0$, we require 
for consistency that $\operatorname{diam}(X) \leq \pi/\sqrt{K_0}$. We also call a geodesic length space 
space with curvature $\geq K_0$ an {\em Alexandrov space}. 

Suppose now that $Y$ is an Alexandrov space with curvature $\geq 1$ and $\operatorname{diam}(Y) \leq \pi$. 
For $K_0 \leq 0$ the $K_0$-cone over $Y$, $C_{K_0}(Y)$ is the space $\RR^+ \times Y/\{0\} \times Y$, with metric
\[
\mbox{dist}( (r_1,y_1), (r_2,y_2)) = \mbox{dist}_{\mathbb{M}^2_{K_0}}( \bar{q}_1, \bar{q}_2),
\]
where $[o,\bar{q}_1] \cup [o, \bar{q}_2] \subset \mathbb{M}^2_{K_0}$ is a `hinge' with $|o \bar{q}_j|=r_j$, $j = 1, 2$,
and (by definition) angle at $o$ equal to $\mbox{dist}_Y(y_1, y_2)$. For $K_0 >0$ the $K_0$-suspension of $Y$ is the space
$[0,\pi/\sqrt{K_0}] \times Y$ with each end $\{0\} \times Y$ and  $\{\pi/\sqrt{K_0}\} \times Y$ identified to points,
and with metric defined as before.  In both cases we obtain Alexandrov spaces with curvature $\geq K_0$. Note 
that $\mathbb{M}^2_{K_0}$ is the $K_0$-cone over $S^1$ (the circle of length $2\pi$) when $K_0 \leq 0$ and the
$K_0$-suspension of $S^1$ when $K_0>0$.  More generally, any conic surface with constant curvature $K_0$ 
and with all cone angles less than $2\pi$ are Alexandrov spaces with curvature $\geq K_0$.

Triangle comparison shows that geodesics in an Alexandrov space do not branch. For conic surfaces with constant 
curvature and cone angles less than $2\pi$, a simple geometric argument shows that any minimizing geodesic 
contains no conic points in its interior.

The following basic diameter estimate (with corresponding rigidity statement) for Alexandrov 
spaces with curvature $\geq K_0 >0$ is used in \S \ref{ss:first_eigenvalue}.
\begin{theorem}[\cite{BBI}] If $X$ is an Alexandrov space with curvature greater than or equal to $K_0 >0$, then 
$\mbox{diam}(X)\leq \pi/\sqrt{K_0}$, with equality if and only if $X$ is the $K_0$-suspension 
of an Alexandrov space with curvature $\geq 1$. 
\label{th:diam}
\end{theorem}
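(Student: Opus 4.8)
After rescaling the metric by $\sqrt{K_0}$ we may assume $K_0=1$, so the claim becomes $\operatorname{diam}(X)\le\pi$, with equality if and only if $X$ is the $1$-suspension of an Alexandrov space of curvature $\ge1$. Everything is driven by one elementary fact about $\mathbb{M}^2_1$: a geodesic of length $\pi$ issuing from a point $\bar p$ ends at the antipode $\bar p^{\,*}$, whatever its initial direction. First I would record: if $d(p,q)=\pi$ for some $p,q\in X$, then $d(p,x)+d(x,q)\le\pi$ for every $x\in X$. Indeed, choose minimizing geodesics $[p,x]$ and $[p,q]$, let $\alpha$ be the angle they make at $p$; by Toponogov's hinge comparison (part of the standard theory for spaces of curvature $\ge1$, cf.\ \cite{BBI}), $d(x,q)$ is at most the $\mathbb{M}^2_1$-distance between the endpoints of a hinge with side lengths $d(p,x)$ and $\pi$ and vertex angle $\alpha$; by the fact above this equals $\pi-d(p,x)$, whence the inequality.

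Granting this, the diameter bound follows: if $d(p,q)=L>\pi$, take a unit-speed minimizing geodesic $\gamma\colon[0,L]\to X$ and put $b=\gamma(\pi)$. Subsegments of minimizing geodesics are minimizing, so $d(p,b)=\pi$ and $d(b,q)=L-\pi$; applying the displayed inequality to the pair $(p,b)$ and the point $q$ gives $L+(L-\pi)\le\pi$, which is absurd. The easy direction of the equality statement is also quick: if $X=C_1(Y)$ with $Y$ Alexandrov of curvature $\ge1$, then $X$ is Alexandrov of curvature $\ge1$ (recalled in \S\ref{metric}) and its two cone points realize distance $\pi$, so $\operatorname{diam}(X)=\pi$.

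The substance is the converse. Suppose $\operatorname{diam}(X)=\pi$. As a complete, locally compact geodesic length space of finite diameter, $X$ is compact (Hopf--Rinow), so $d(p,q)=\pi$ for some $p,q$. By the displayed inequality, $d(p,x)+d(x,q)=\pi$ for all $x$, so each $x$ lies on a minimizing $p$--$q$ geodesic; writing $\sigma^{(v)}\colon[0,\pi]\to X$ for the one with initial direction $v\in\Sigma_p$ (the space of directions at $p$), we get a surjection $[0,\pi]\times\Sigma_p\to X$, $(r,v)\mapsto\sigma^{(v)}(r)$. Surjectivity of $v\mapsto\sigma^{(v)}$ onto all directions needs a limiting argument: each $v\in\Sigma_p$ is a limit of directions $[p,x_n]$ with $x_n\to p$, and by Arzel\`a--Ascoli the associated minimizing $p$--$q$ geodesics subconverge to one with initial direction $v$. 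The map collapses $\{0\}\times\Sigma_p$ and $\{\pi\}\times\Sigma_p$ to $p$ and $q$, so it factors through the suspension $C_1(\Sigma_p)$; the point is that the induced map $C_1(\Sigma_p)\to X$ is an isometry. Here $\Sigma_p$ with the angle metric is a compact Alexandrov space of curvature $\ge1$, and $\operatorname{diam}(\Sigma_p)\le\pi$ by induction on dimension (this very theorem applied to $\Sigma_p$, whose dimension is one less; the base case $\dim X=1$ is immediate from the consistency convention of \S\ref{metric}), so $C_1(\Sigma_p)$ is defined and the proof will be complete.

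To see the induced map is distance-preserving, it suffices to check, for $x_i=\sigma^{(v_i)}(r_i)$ with $r_i\in(0,\pi)$, that $d(x_1,x_2)=\arccos\bigl(\cos r_1\cos r_2+\sin r_1\sin r_2\cos\vartheta\bigr)$ with $\vartheta:=d_{\Sigma_p}(v_1,v_2)=\angle\,x_1\,p\,x_2$, which is precisely the suspension distance. The inequality ``$\ge$'' is Toponogov's hinge comparison at $p$. For ``$\le$'' one plays the comparison from $p$ against the one from $q$: the segments $[x_i,p]$ and $[x_i,q]$ join up into $\sigma^{(v_i)}$, so their directions are antipodal in $\Sigma_{x_i}$, hence $\operatorname{diam}(\Sigma_{x_i})=\pi$; by the theorem in dimension $\dim X-1$, $\Sigma_{x_i}$ is the $1$-suspension over that antipodal pair, which forces $\angle p\,x_i\,x_j+\angle q\,x_i\,x_j=\pi$ for $j\ne i$. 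On the other hand, in $\mathbb{M}^2_1$ a point $\bar q$ and its antipode $\bar p$ satisfy $|\bar q\,\bar x_i|=\pi-|\bar p\,\bar x_i|$, so the comparison angles at $\bar x_i$ for the comparison triangles of $\triangle p x_1 x_2$ and of $\triangle q x_1 x_2$ are supplementary; together with the previous equality this forces every true angle at $x_1$ and $x_2$ to equal the corresponding comparison angle, and the rigidity case of Toponogov's theorem then pins $d(x_1,x_2)$ to its spherical value. I expect this last step to be the main obstacle: the lower curvature bound by itself yields only the inequality ``$\ge$'', and the reverse one is forced only after invoking the suspension structure of the spaces of directions, i.e.\ via the dimension induction and the rigidity case of the comparison theorem; by contrast the diameter inequality itself is soft.
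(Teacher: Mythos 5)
First, a point of orientation: the paper does not prove Theorem~\ref{th:diam} at all --- it is quoted from \cite{BBI} as a known result (the maximal--diameter/suspension rigidity theorem for Alexandrov spaces), so there is no in-paper argument to compare yours against. Your sketch follows the standard route: hinge comparison for the diameter bound, then realizing $X$ as the suspension over the space of directions at a point realizing the diameter. Two things need repair, one small and one substantive. The small one: in deriving the diameter bound you apply the displayed inequality to the pair $(p,b)$ and the point $q$, but your proof of that inequality builds a model hinge with side lengths $d(p,x)$ and $\pi$, which exists in $\mathbb{M}^2_1$ only when $d(p,x)\le\pi$; here $d(p,q)=L>\pi$, so the comparison does not apply as stated. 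Take the hinge at $b$ instead (sides $\pi$ and $L-\pi$, angle $\pi$ along $\gamma$), after first truncating $\gamma$ so that $L\le 2\pi$; this gives $d(p,q)\le \pi-(L-\pi)$ and the same contradiction.

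The substantive problem is in the last paragraph: you have attached the two inequalities to the wrong mechanisms. For curvature bounded \emph{below}, the hinge comparison says the endpoints of a hinge are \emph{no farther apart} than in the model (actual angle $\ge$ comparison angle, and the model distance is increasing in the angle), so the comparison at $p$ yields $d(x_1,x_2)\le\arccos(\cos r_1\cos r_2+\sin r_1\sin r_2\cos\vartheta)$ --- that is the easy direction, not ``$\ge$'' as you write (you use the correct direction in your first lemma, then reverse it here). The genuinely hard direction is ``$\ge$'', and the supplementary-angle argument you describe is indeed the standard engine for it, but as written it does not close: it shows the actual angles at $x_1$ and $x_2$ equal the comparison angles of the triangle built from the \emph{actual} side lengths, which does not by itself pin $d(x_1,x_2)$ to the value dictated by $\vartheta=\angle x_1px_2$; you would still need the full equality case of Toponogov's theorem (equality at one vertex forces an isometrically embedded spherical triangle, hence equality at $p$), which is itself a nontrivial rigidity statement usually established together with, not before, the suspension theorem. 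A cleaner finish: since $d(p,\cdot)+d(q,\cdot)\equiv\pi$, the function $h(t)=\cos d(p,\gamma(t))$ satisfies both $h''+h\ge 0$ and $h''+h\le 0$ in the barrier sense along every geodesic, hence is exactly sinusoidal; applying this with $z=x_2$ along $\sigma^{(v_1)}$ and using the first variation formula at $p$ gives the spherical law of cosines with $\vartheta$ on the nose. Finally, note that your induction rests on the Burago--Gromov--Perelman structure theory (that $\Sigma_p$ is a compact Alexandrov space of curvature $\ge1$ and dimension one less), which is far heavier than anything else in the argument; since the paper only ever applies the theorem to surfaces this is tolerable, but it should be flagged as an external input rather than folded silently into the induction.
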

This implies that a conic surface $\Sigma$ with constant Gauss curvature $K_0>0$, cone angles 
less than $2\pi$ and diameter $\pi/\sqrt{K_0}$ is necessarily the $K_0$-suspension of a circle of length $\leq 2\pi$.
The underlying space is thus $S^2$ and there are exactly two conic points with equal cone angles. In particular, if
the cone angles are $2\pi$, then $\Sigma$ is the round sphere $S^2$ with curvature $K_0$. 

\section{Preliminaries from geometric analysis} 
We now describe the differential operators related to this problem.

\subsection{Curvature equations and Bianchi gauge}
The study of constant curvature metrics on surfaces customarily uses the second order nonlinear 
differential operator $g \mapsto K^g$, where $K^g$ is the Gauss curvature of the metric $g$.
However, in two dimensions, the Ricci tensor $\Ric^g$ is always pure-trace, $\Ric\,^g = K^g g$, 
so $g$ has constant curvature if and only if it is Einstein, $\Ric^g = K_0 g$ for some $K_0 \in \RR$, 
so we can use these two equations interchangeably. All constant curvature metrics near to a given one are 
solutions of the equation
\begin{equation}
(h,K) \longmapsto E^{g}(h,K) := (K^{g + h} - K) (g + h) = 0;
\label{eq:ein}
\end{equation}
where $h$ is a symmetric $2$-tensor with small norm, and $K$ is a constant near to $K_0$. 
If $K$ is not being regarded as a variable, we write simply $E^g(h)$. 
This equation is invariant under the action of the diffeomorphism group, hence cannot be
elliptic. The tangent space to the orbit through $g$ of this action consists of all symmetric 
$2$-tensors $k = (\delta^{g})^* \om$, where $\om$ is a $1$-form, or equivalently, 
$k = \frac 12 \calL_X g$ where $X$ is the vector field metrically dual to $\om$. Here $(\delta^g)^*$
is the adjoint of the divergence from symmetric $2$-tensors to $1$-forms, i.e.\ the symmetrized covariant derivative
\[
\left((\delta^{g})^* \om\right)_{ij} = \frac12 \left( \omega_{i; j} + \omega_{j; i} \right).
\]
Note that $\tr^g (\delta^g)^* = -\delta^g: \Omega^1 \to \Omega^0$ is the negative of the ordinary 
codifferential on $1$-forms.

Formally, the orthogonal complement of the tangent space to the diffeomorphism orbit at $g$ 
is the nullspace of the adjoint of $(\delta^g)^*$, i.e., of $\delta^{g}: S^2 \to \Omega^1$. 
The system $h \mapsto (E^g(h),\delta^g(h))$ is elliptic, and its solutions (with $|h|_g$ small) 
correspond to constant curvature metrics $g+h$. We use a slightly different gauge, however, 
adjoining to $E^g$ the Bianchi operator 
\begin{equation}
B^{g} = \delta^{g} + \frac12 d \, \tr^{g},
\label{eq:bianchi}
\end{equation}
because it has some convenient features. The Bianchi identities imply that $B^g(\Ric^g) = 0$ in any
dimension; specific to two dimensions, however, is the fact that $B^g$ annihilates pure trace tensors:
\begin{equation}
B^{g} (f g) = \delta^g(f g) + df = -df + df = 0
\label{eq:bianchi-trace}
\end{equation}
for all functions $f$, which implies that $B^g (K^g \, g) = 0$ too.  In any case, the range of $E^g$ lies 
in the kernel of $B^g$. 

We `roll up' the system $(E^g,B^g)$ into the single operator acting on symmetric $2$-tensors,
\begin{equation}
\begin{array}{rcl}
N^{g}(h) & = &  E^{g}(h) + (\delta^{g + h})^* B^{g}(h) \\[0.5ex]
& = &  \left( K(g+h) - K_0\right)(g+h) + (\delta^{g + h})^* B^{g}(h).
\end{array}
\label{eq:gein}
\end{equation}
Any solution $h$ to $E^g(h) = 0$, $B^g(h) = 0$ obviously satisfies $N^g(h) = 0$. The converse
is true under certain circumstances. Recall the standard Weitzenb\"ock formula 
\[
B^g (\delta^g)^* = \frac12\left((\nabla^g)^* \nabla^g - \Ric^g\right) =: P^g.
\]
Applying $B^{g+h}$ to $N^g(h) = 0$ yields $P^{g+h} B^g(h) = 0$.  Thus if we can prove that $P^{g+h}$ is invertible (or at 
least injective), we then conclude that $B^g(h) = 0$ and hence $E^g(h) = 0$, or equivalently, $K(g+h) = K_0$. 

\subsection{The linearized curvature operators}\label{ss:lin_curv}
Now consider the linearizations of the operators appearing in the last subsection.

The starting point is the formula, valid in general dimensions, for the linearization of the Einstein operator:
\begin{equation}
\left. DE^g\right|_{h=0} = \frac12 \left(\nabla^*\nabla - 2 {\circR}{}^g\right) - (\delta^g)^* B^g;
\label{eq:linein}
\end{equation}
here ${\circR}{}^g$ is the curvature tensor acting as a symmetric endomorphism on the bundle of symmetric 
$2$-tensors, see \cite{Be}. Decomposing $h$ into its tracefree and pure trace parts, $h = h^0 + f \cdot g$, then
in two dimensions, 
\[
{\circR}{}^{\, g} (h) = -K^g \, h^0 + K^g f \cdot g.
\]
Now recall the conformal Killing operator
\[
\mathcal{D}^g \omega := (\delta^g)^* \omega + \frac{1}{2} \delta^g(\omega)g:\Omega^1(\Si) \to \calC^\infty(\Si;S^2_0),
\]
which is the trace-free part of $(\delta^g)^* \omega$, and hence also the adjoint of $B^g$ restricted to trace-free tensors. 
In terms of these, 
\begin{equation}
\begin{split}
& \left. D E^g\right|_0 (h^0 + f \cdot g) = \\ & \quad \left( \frac12 (\nabla^* \nabla + 2K^g ) - \calD^g B^g \right) h^0 
+ \left( \frac12 ( \Delta^g - 2 K^g ) f + \frac12 \delta^g \delta^g h^0 \right) \cdot g, 
\end{split}
\label{eq:decompDE}
\end{equation}
hence the linearization of the Bianchi-gauged Einstein operator is
\begin{equation}
L^g:= \left. DN^g\right|_{h=0} = \frac12 \left(\nabla^*\nabla - 2 {\circR}{}^g\right). 
\label{eq:lingein}
\end{equation}
This simple expression is one motivation for introducing the Bianchi gauge.  Separating 
into tracefree and trace parts, and with $K^g \equiv K_0$, we have
\begin{equation}
L^g (h^0  + f \cdot g) = \frac12 \left( \nabla^* \nabla + 2 K_0 \right) h^0 
+ \left( \frac12 (\Delta^g - 2K_0)f \right) \cdot g.
\label{eq:geindec}
\end{equation}
The operator in the second term on the right involves the scalar Laplacian. By convention henceforth,
our scalar Laplacian is the one with nonnegative spectrum, i.e., minus the sum of second derivatives squared.

Differentiating \eqref{eq:ein} at a metric $g$ with $K^g = K_0$ gives
\[
DE^g (h) = DK^g (h)  \cdot g + (K^g - K_0) h = DK^g(h) \cdot g,
\]
so comparing this with \eqref{eq:decompDE} we obtain the two formul\ae
\begin{eqnarray}
DK^g(h^0 + f \cdot g) = \frac12 \left( (\Delta^g - 2K_0) f + \delta^g \delta^g h^0\right), \label{eq:DK} \\
\left( \frac12 ( \nabla^* \nabla + 2K_0) - \calD^g \delta^g \right) h^0 = 0. \label{eq:extraformula}
\end{eqnarray}
Note that \eqref{eq:extraformula} holds for every trace-free $h^0$, so this is a Weitzenb\"ock identity. 
On the other hand, \eqref{eq:DK} is called Lichnerowicz' formula, see \cite{Be}, \cite{Tr}.  

\medskip

There are three useful intertwining formul\ae.  First, linearizing the identity $B^{g+h}N^g(h) = P^{g+h}B^g(h)$ at $h=0$, where 
$K^g = K_0$, gives
\begin{equation}
B^g L^g = P^g B^g.
\label{eq:intertwine}
\end{equation}
Note that both sides vanish identically on pure trace tensors. Next, taking the adjoint of this equation gives
\begin{equation}
L^g \calD^g = \calD^g P^g.
\end{equation}
This will be useful in \S 5.1 below. Finally, using the most classical Weitzenb\"ock identity, 
$\Delta^g_1 = (\nabla^g)^* \nabla^g + \Ric^g$, for the Hodge Laplacian on $1$-forms, we obtain that 
\begin{equation}
P^g = \frac12\left( \Delta^g_1 - 2 K^g \right).
\label{eq:Weitz}
\end{equation}
Consequently, if $K^g = K_0$ is constant, then 
\begin{equation}
\delta^g P^g = \frac12 (\Delta^g - 2K_0) \delta^g.
\label{intertwine2}
\end{equation}

\subsection{Transverse-traceless tensors}
A key role in our analysis is played by the space of transverse-traceless tensors, both smooth or
with poles of order $1$ at $\frakp$.

Let $g$ be a smooth metric on the compact Riemann surface $\Si$, and define
\begin{equation}
\Stt = \{\kappa \in \calC^\infty(\Si; S^2(T^* \Si)): \delta^g \kappa = 0,\ \tr^g \kappa = 0\}.
\label{eq:tt}
\end{equation}
There is an identification of $\Stt$ with the tangent space at $g$ to the space of all conformal structures 
modulo diffeomorphisms on $\Si$.  This is true even in higher dimensions, but when the dimension is greater 
than $2$, $\Stt$ is infinite dimensional. In $2$ dimensions, however, $\delta^g: H^1(\Si; S^2_0) \to L^2(\Si; \Lambda^1)$ is 
elliptic, so its nullspace $\Stt$ is finite dimensional. (This ellipticity is easy to check: the bundles $S^2_0$ and $\Lambda^1$ 
both have rank $2$; furthermore, the symbol of $\delta^g$, evaluated on the covector $\xi$, is contraction with $\xi$, 
which is an isomorphism.)   In fact, $\Stt$ is canonically identified with the space of holomorphic quadratic differentials 
on $\Si$, see \cite{Tro}, hence when $\gamma > 1$, then
\begin{equation}
\dim \Stt = 6\gamma-6.
\label{eq:ttdim}
\end{equation}
The dimension is $0$ and $2$ for $\gamma = 0, 1$. 

There is an important special feature in two dimensions. 
\begin{proposition} If $\dim \Si = 2$, then $\Stt$ is conformally invariant. In other words, 
if $\tilde{g} = e^{2\phi}g$ are any two conformally related metrics on $\Si$, then
\[
\tr^{\tilde{g}}h = 0,\ \delta^{\tilde{g}}h = 0 \Longleftrightarrow \tr^{g}h = 0,\ \delta^{g}h = 0.
\]
\end{proposition}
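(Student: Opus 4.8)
The plan is to prove the two conditions separately, since the trace condition is the easy half and the divergence condition is where the two-dimensionality is essential. For the trace: under $\tilde g = e^{2\phi}g$ one has $\tilde g^{ij} = e^{-2\phi} g^{ij}$, so $\tr^{\tilde g} h = \tilde g^{ij} h_{ij} = e^{-2\phi} \tr^g h$; hence $\tr^{\tilde g} h = 0 \Leftrightarrow \tr^g h = 0$, with no dimensional restriction. The content is therefore in showing that, \emph{for trace-free} symmetric $2$-tensors in dimension $2$, the vanishing of the divergence is conformally invariant.

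For that, the cleanest route I would take is to invoke the identification of trace-free symmetric $2$-tensors with (the real parts of) quadratic differentials. In a local holomorphic coordinate $z$ for the conformal class, write $g = e^{2\psi}|dz|^2$; a trace-free symmetric $2$-tensor $h$ is then of the form $h = \mathrm{Re}(u\,dz^2)$ for a complex-valued function $u = u(z,\bar z)$, and this representation depends only on the conformal class, not on the particular metric in it. A direct computation of $\delta^g h$ in these coordinates shows that $\delta^g h = 0$ is equivalent to $\partial_{\bar z} u = 0$, i.e.\ to $u$ being holomorphic — and crucially the conformal factor $e^{2\psi}$ drops out of this equation (the Christoffel symbols of $g$ contribute terms that either cancel by trace-freeness or combine into the $\partial_{\bar z}$ derivative with no $\psi$-dependence). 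Since the condition ``$u$ holomorphic'' makes no reference to the metric, it is the same for $g$ and for $\tilde g = e^{2\phi}g$, which both induce the same holomorphic structure. This establishes $\delta^{\tilde g} h = 0 \Leftrightarrow \delta^g h = 0$ on trace-free tensors, and combined with the trace computation finishes the proof.

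An alternative, coordinate-free argument — which I would mention as a remark or use instead if one prefers not to pass to local holomorphic coordinates — is to compute the conformal variation of $\delta^g$ directly: for a trace-free $h$, one has a pointwise formula $\delta^{e^{2\phi}g} h = e^{-2\phi}\bigl(\delta^g h + c_n\, h(\nabla^g \phi, \cdot)\bigr)$ for a dimensional constant $c_n$, where the ``$h(\nabla\phi,\cdot)$'' term comes from the difference of Christoffel symbols $\tilde\Gamma - \Gamma = d\phi \otimes \mathrm{Id} + \mathrm{Id}\otimes d\phi - g\otimes \nabla\phi$ contracted against $h$; in dimension $2$ (and using $\tr^g h = 0$) the correction term vanishes identically, giving $\delta^{\tilde g}h = e^{-2\phi}\delta^g h$ outright. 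The main obstacle — really the only subtlety — is verifying that the first-order correction term genuinely drops out: one must be careful to use both $\tr^g h = 0$ \emph{and} $n = 2$ (in higher dimensions the analogous term does not vanish, consistent with the Proposition being false there, as the surrounding text notes). Everything else is bookkeeping with Christoffel symbols.
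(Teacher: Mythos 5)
Your proposal is correct. The trace half is identical to the paper's. For the divergence half, your primary route — identifying a trace-free symmetric $2$-tensor with $\mathrm{Re}(u\,dz^2)$ in a local holomorphic coordinate and showing $\delta^g h = 0 \Leftrightarrow \partial_{\bar z}u = 0$ — is genuinely different in presentation from the paper's argument, which works coordinate-free: it writes down $\nabla^{\tilde g}_X Y - \nabla^g_X Y = d\phi(X)Y + d\phi(Y)X - g(X,Y)\nabla^g\phi$ and deduces $\delta^{\tilde g}\kappa = e^{-2\phi}\bigl(\delta^g\kappa + (\tr^g\kappa)\,d\phi + (2-n)\,\iota(\nabla^g\phi)\kappa\bigr)$, so that both correction terms die when $n=2$ and $\tr^g\kappa=0$. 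Your ``alternative, coordinate-free argument'' is in fact exactly this, so the paper's proof is the one you relegated to a remark. What the complex-analytic route buys is a conceptual explanation (the condition ``$u$ holomorphic'' manifestly depends only on the complex structure) and it connects directly to the identification of $\Stt$ with holomorphic quadratic differentials used later in \S 3.3; what it costs is that the verification that the conformal factor drops out of $\delta^g h = 0$ is morally the same Christoffel-symbol bookkeeping as the paper's computation, just performed in isothermal coordinates, so it is not shorter — and one should be explicit about that step rather than only asserting it parenthetically. The paper's version also records the general-$n$ formula, which makes transparent exactly where two-dimensionality enters; your remark correctly identifies the same two ingredients ($\tr^g h=0$ and $n=2$).
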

\begin{proof} The fact that $\kappa \in \Stt$ is simultaneously trace-free with respect to
both $g$ and $\tilde{g}$ follows from the general formula
\[
\tr^{\tilde{g}} h = e^{-2\phi}\tr^g h.
\]
Next, the identity $\nabla^{\tilde{g}}_X Y = \nabla^g_X Y + d\phi(X)Y + d\phi(Y) X - g(X,Y)\nabla^g \phi$ 
yields that in $n$ dimensions, 
\[
\delta^{\tilde{g}} \kappa = e^{-2\phi}\left( \delta^g \kappa + (\tr^g \kappa) d\phi + (2-n) \iota(\nabla^g\phi) \kappa\right). 
\]
Thus if $n=2$ and $\tr^g \kappa = \delta^g \kappa = 0$, then $\delta^{\tilde{g}} \kappa = 0$.
\end{proof}
\noindent In particular, if $\olg$ is smooth and $e^{2\phi} \olg = g $ is conic, then $\Stt(\olg) = \Stt(g)$.

Now, fix $\frakp$ as before, and consider the space
\begin{equation}
\Stt^\sing = \{\kappa:  \delta^g \kappa = \tr^g \kappa = 0,\ \ |\kappa|_{\olg} = \calO(|z_j|^{-1})\  j = 1, \ldots, k\},
\label{eq:ttsing}
\end{equation}
where $z_j$ is a local holomorphic coordinate centered at $p_j$. The elements are the meromorphic quadratic differentials 
with at most simple poles at $\frakp$. By the Riemann-Roch formula, 
\begin{equation}
\dim \Stt^\sing = \dim \Stt + 2k.
\label{eq:ttsingdim}
\end{equation}
when $\gamma>1$. (The dimension is $2k-3$ for $\gamma=0$ and $2k$ for $\gamma=1$.)
It is trivial from the definition that 
\[
\kappa \in \Stt^\sing \Longrightarrow B^g \kappa = 0\ \ \mbox{on}\ \Sip.
\]
\begin{proposition} Let $(\Si,g)$ be a compact smooth surface. Then the kernel of $L^g$ on $S^2_0$ equals $\Stt$. 
\label{pr:kerLtf}
\end{proposition}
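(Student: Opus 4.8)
The plan is to prove the two inclusions $\Stt \subseteq \ker L^g|_{S^2_0}$ and $\ker L^g|_{S^2_0} \subseteq \Stt$ separately, using the Weitzenböck machinery assembled in \S\ref{ss:lin_curv}. For the first inclusion, suppose $\kappa \in \Stt$, so $\tr^g \kappa = 0$ and $\delta^g \kappa = 0$. From \eqref{eq:geindec} the restriction of $L^g$ to trace-free tensors is $\tfrac12(\nabla^*\nabla + 2K_0)$, so I must show $\nabla^*\nabla \kappa = -2K_0 \kappa$. This is precisely the content of the Weitzenböck identity \eqref{eq:extraformula}, which states $\bigl(\tfrac12(\nabla^*\nabla + 2K_0) - \calD^g\delta^g\bigr)h^0 = 0$ for \emph{every} trace-free $h^0$; applying it to $\kappa$ and using $\delta^g \kappa = 0$ gives $\tfrac12(\nabla^*\nabla + 2K_0)\kappa = 0$, i.e. $L^g \kappa = 0$. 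So this direction is essentially free once one invokes \eqref{eq:extraformula}.

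For the reverse inclusion, suppose $h^0 \in \calC^\infty(\Si; S^2_0)$ satisfies $L^g h^0 = 0$; I must show $\delta^g h^0 = 0$ (tracelessness is given). The natural move is to apply the divergence to the equation and use the intertwining relations. From \eqref{eq:decompDE} and \eqref{eq:lingein}, $L^g h^0 = \bigl(\tfrac12(\nabla^*\nabla + 2K^g) - \calD^g B^g\bigr)h^0$ on the trace-free part, together with the trace part $\tfrac12 \delta^g\delta^g h^0 \cdot g$; wait—more directly, applying $B^g$ to $L^g h^0 = 0$ and using the intertwining identity $B^g L^g = P^g B^g$ from \eqref{eq:intertwine} yields $P^g B^g h^0 = 0$. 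Since $\tr^g h^0 = 0$, $B^g h^0 = \delta^g h^0$ is a $1$-form $\omega$, so $P^g \omega = 0$. Now $P^g = \tfrac12(\Delta^g_1 - 2K^g)$ by \eqref{eq:Weitz}, and $P^g = B^g(\delta^g)^* = \tfrac12((\nabla^g)^*\nabla^g - \Ric^g)$; on a compact surface, pairing $P^g\omega = 0$ with $\omega$ in $L^2$ and using the Bochner formula shows $\omega$ is parallel (when $K_0 > 0$ it forces $\omega = 0$ immediately; when $K_0 \le 0$ one gets $\nabla\omega = 0$ and $K_0|\omega|^2 = 0$, so $\omega = 0$ unless $K_0 = 0$, in which case $\omega$ is a harmonic $1$-form). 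The remaining subtlety is the flat case $K_0 = 0$: there $\omega$ could be a nonzero harmonic $1$-form, so I would argue separately that $\delta^g h^0 = \omega$ being harmonic still forces $\omega = 0$, either because $\delta^g h^0$ is automatically exact or co-exact for degree reasons, or by pairing differently—e.g. using that $h^0 = \calD^g\eta$ cannot produce a nonzero harmonic divergence.

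The main obstacle I anticipate is exactly this flat-torus borderline case and, more generally, keeping careful track of which Weitzenböck/intertwining identity applies to trace-free as opposed to general tensors; the identities \eqref{eq:intertwine}, \eqref{eq:Weitz}, \eqref{intertwine2} all vanish or simplify on pure-trace inputs, so one must be sure $B^g h^0 = \delta^g h^0$ is being fed into $P^g$ correctly and that no trace part sneaks back in. Once the injectivity (or triviality of parallel $1$-forms in the relevant range) of $P^g$ on $\Omega^1(\Si)$ is established, both inclusions close and $\ker L^g|_{S^2_0} = \Stt$ follows. I expect the write-up to be short, with the flat case handled by a one-line cohomological remark.
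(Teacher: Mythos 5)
Your first inclusion matches the paper's argument. For the converse, however, your route through $P^g$ contains a step that fails: from $P^g\omega=0$ with $\omega=\delta^g h^0$ you claim that $K_0>0$ ``forces $\omega=0$ immediately,'' but it does not. By \eqref{eq:Weitz}, $2P^g=\Delta^g_1-2K_0$, and on the round sphere of curvature $K_0$ the number $2K_0$ \emph{is} an eigenvalue of the Hodge Laplacian on $1$-forms (eigenforms $\iota^*dx_i$ and $\star\,\iota^*dx_i$); equivalently, the Bochner pairing only yields $\|\nabla\omega\|^2=K_0\|\omega\|^2$, which for $K_0>0$ is no contradiction. So $\ker P^g\neq 0$ precisely in the spherical case, and your argument stalls there as well as in the flat case, which you leave as a sketch. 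Both cases can be repaired by one further observation: $\delta^g h^0$ is $L^2$-orthogonal to $\ker P^g$, since $\langle\delta^g h^0,\eta\rangle=\langle h^0,(\delta^g)^*\eta\rangle$ and for $\eta$ in the kernel ($\eta$ parallel when $K_0=0$; $\eta=\iota^*dx_i$ or its Hodge star when $K_0>0$) the tensor $(\delta^g)^*\eta$ is zero or pure trace, hence pairs to zero against the trace-free $h^0$. Then $\omega\in\ker P^g$ together with $\omega\perp\ker P^g$ gives $\omega=0$.

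The paper avoids all of this by not passing through $P^g$ at all: \eqref{eq:extraformula} is an identity valid for \emph{every} trace-free $h^0$, so $L^g\kappa=\calD^g\delta^g\kappa$ on $S^2_0$, and pairing $L^g\kappa=0$ with $\kappa$ gives $0=\langle\calD^g\delta^g\kappa,\kappa\rangle=\|\delta^g\kappa\|^2$ directly, because $\calD^g$ is the adjoint of $B^g=\delta^g$ on trace-free tensors. This is one line, uniform in the sign of $K_0$, and requires no case analysis; your $B^gL^g=P^gB^g$ detour is salvageable but needs the orthogonality input above to close the spherical and flat cases.
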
 
\begin{proof}
If $\kappa \in \Stt$, then \eqref{eq:geindec} and \eqref{eq:extraformula} give that $L^g \kappa=0$. Conversely, if $L^g \kappa=0$ 
and $\tr^g\kappa = 0$, then integrating by parts, we find that
\[
\langle \calD^g\delta^g \kappa, \kappa \rangle = \| \delta^g \kappa \|^2 = 0 \Longrightarrow \kappa \in \Stt,
\]
as claimed.
\end{proof}

To conclude, we complete the description of the nullspace of $L^g$. 
\begin{proposition}
Suppose that $(\Si,g)$ has constant curvature $K_0$. Then the kernel of $L^g$ on pure trace tensors 
consists of tensors of the form $h = f \cdot g$ where $f$ lies in the eigenspace of the scalar Laplacian 
with eigenvalue $2K_0$. In particular, this kernel is trivial when $K_0<0$, it consists of the constant functions 
when $K_0=0$ and is equal to the usual three dimensional first nonzero eigenspace of the Laplacian on 
functions when $\Si = S^2$ and $K_0>0$. 
\label{pr:kerLt}
\end{proposition}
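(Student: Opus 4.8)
The plan is to reduce the entire statement to a fact about the scalar Laplacian, using the decomposition \eqref{eq:geindec}. Restricting that formula to pure trace tensors $h = f\cdot g$, i.e.\ setting $h^0 = 0$ and using $K^g \equiv K_0$, I get immediately
\[
L^g(f\cdot g) = \tfrac12(\Delta^g - 2K_0)\,f\cdot g .
\]
Since $f \mapsto f\cdot g$ is a linear isomorphism from $\calC^\infty(\Si)$ onto the space of pure trace symmetric $2$-tensors, the kernel of $L^g$ on pure trace tensors is carried bijectively onto the kernel of $\Delta^g - 2K_0$ on functions, that is, onto the $2K_0$-eigenspace of $\Delta^g$. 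This is the whole of the main assertion; everything else is a case analysis on the sign of $K_0$.

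Next I would dispose of $K_0 \leq 0$. With the convention (fixed just above) that $\Delta^g$ has nonnegative spectrum, $2K_0$ can be an eigenvalue only if $2K_0 \geq 0$. Hence when $K_0 < 0$ the eigenspace is $\{0\}$ and the kernel is trivial, while when $K_0 = 0$ the relevant eigenvalue is $0$, whose eigenspace on the connected compact surface $\Si$ consists exactly of the constant functions.

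Finally, the case $K_0 > 0$. By the Gauss-Bonnet identity \eqref{eq:GB2}, a compact surface carrying a metric of constant curvature $K_0 > 0$ has $\chi(\Si) > 0$, so $\Si \cong S^2$; and by the classification of constant-curvature surfaces, $g$ is, after the constant rescaling by $K_0^{-1}$, the round metric of the unit sphere $S^2 \subset \RR^3$. Under that rescaling the spectrum $\{\ell(\ell+1): \ell \geq 0\}$ of the round unit sphere becomes $\{K_0\,\ell(\ell+1): \ell \geq 0\}$, whose smallest nonzero element is $2K_0$, with eigenspace the $3$-dimensional span of the restrictions of the linear coordinates $x_1, x_2, x_3$. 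Thus the $2K_0$-eigenspace is precisely the first nonzero eigenspace of $\Delta^g$, which finishes the proof.

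The one point that is not purely formal is this last identification: knowing that $2K_0$ is the \emph{first} nonzero eigenvalue and that its multiplicity is $3$. This rests on the rigidity statement that a closed surface of constant curvature $K_0>0$ is the round sphere up to scale (equivalently, the equality case in the Lichnerowicz-Obata estimate, or simply the classification of space forms), after which the spherical-harmonics computation is standard.
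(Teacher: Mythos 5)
Your proof is correct and follows exactly the route the paper intends: the paper states this proposition without proof, treating it as immediate from the decomposition \eqref{eq:geindec}, which is precisely your reduction to the $2K_0$-eigenspace of $\Delta^g$, followed by the standard case analysis (and the paper itself invokes the same fact about the first eigenvalue of the round sphere in \S 3.4). No gaps.
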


\subsection{Local deformation theory for smooth surfaces}
To warm up for the corresponding theorem in the conic setting, we now use this Bianchi-gauged Einstein 
formalism equation to reprove the (well-known) local structure theory of the Teichm\"uller space of a smooth 
compact surface $\Si$. This is only a small modification of the arguments in \cite{Tr} but we also include the cases 
where the genus $\gamma$ is $0$ or $1$, where the proofs require a bit more work. 

Let $\Si$ be a compact surface with genus $\gamma \geq 0$  and $g$ a smooth metric with constant curvature 
$K_0$ on $\Si$. We first show that the space $\calM_{K_0}$ of {\it all} metrics with constant curvature $K_0$ (without
an area normalization) is an infinite dimensional Banach manifold near $g$; the next step is to prove that the intersection 
of $\calM_{K_0}$ with a small ball in the slice $g + \Stt$ coincides with the space of nearby solutions of the Bianchi-gauged 
equation $N^g(h) = 0$, and is a smooth manifold of dimension $6\gamma-6$ when $\gamma > 1$. This intersection has 
dimensions $2$ and $0$ when $\gamma = 1$ and $0$, respectively, but is only included in the nullspace of $N^g$ then. 
This intersection parametrizes the set of constant curvature metrics near to $g$ which are not equivalent 
to $g$ by diffeomorphisms. We complete 
this picture by establishing that any metric $g' \in \calM_{K_0}$ near to $g$ can be written uniquely as $F^* (g+h)$ 
where $F$ is a diffeomorphism close to the identity and $g+h$ lies in this slice. This shows that the `flowbox' 
associated to the slice covers a full neighbourhood of $g$ in $\calM_{K_0}$.   An additional important result is that 
the identity component of the diffeomorphism group acts properly and freely on $\calM_{K_0}$ when $K_0 < 0$; 
we review this argument in \S 6 when it is generalized to the conic case. 

\begin{proposition}
The space $\calM_{K_0}$ is a Banach submanifold in the space of all $\calC^{2,\alpha}$ metrics in some neighbourhood of $g$.
\end{proposition}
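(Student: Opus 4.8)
The plan is to realize $\calM_{K_0}$ locally as the zero set of a submersion between Banach spaces, so that the implicit function theorem applies. The natural candidate is the Einstein operator $E^g$ from \eqref{eq:ein}, viewing $K_0$ as fixed: a metric $g+h$ near $g$ has constant curvature $K_0$ precisely when $E^g(h) = (K^{g+h}-K_0)(g+h) = 0$. The difficulty is that $E^g$ is \emph{not} a submersion — its linearization $DE^g|_0$ from \eqref{eq:linein} has the conformal Killing directions $(\delta^g)^* B^g h$ in its image structure and, dually, a large cokernel coming from diffeomorphism invariance. So $E^g$ alone cannot cut out a manifold; one must either work equivariantly or pass to the gauged operator.

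The cleanest route, consistent with how the conic case will be handled, is to use the Bianchi-gauged operator $N^g$ from \eqref{eq:gein} together with the Weitzenb\"ock argument already sketched in \S 3.1. Concretely: first show $N^g: \calC^{2,\alpha}(\Si; S^2) \to \calC^{0,\alpha}(\Si;S^2)$ has surjective linearization $L^g = \frac12(\nabla^*\nabla - 2\circR^g)$ with finite-dimensional kernel — this is immediate since $L^g$ is a self-adjoint elliptic operator on a closed manifold, hence Fredholm of index zero, so $\ran L^g$ is the $L^2$-orthogonal complement of $\ker L^g$ and $N^g$ fails to be a submersion only in these finitely many directions. Second, by the implicit function theorem (in the form for maps with split surjective-up-to-finite-cokernel linearization, or simply after restricting the domain to a complement of $\ker L^g$ and the range appropriately), the zero set $\{N^g(h)=0\}$ near $h=0$ is a finite-dimensional smooth manifold, in fact of dimension $\dim\ker L^g$. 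Third, and this is the point that actually uses the Bianchi trick: show that for $h$ small, $N^g(h)=0$ forces $B^g(h)=0$ and hence $E^g(h)=0$, i.e. $g+h \in \calM_{K_0}$. This is exactly the argument in \S 3.1 — apply $B^{g+h}$ to get $P^{g+h}B^g(h)=0$, and invoke injectivity of $P^{g+h}=\frac12(\Delta_1^{g+h}-2K^{g+h})$. Conversely every constant-curvature metric near $g$ is gauge-equivalent to one satisfying the Bianchi condition, so after pulling back by a small diffeomorphism it lies in this zero set; combined with properness/freeness of the diffeomorphism action (or a direct local slice argument) this identifies a neighborhood of $g$ in $\calM_{K_0}$ with a Banach manifold — the total space being the finite-dimensional zero set of $N^g$ times the diffeomorphism directions, which is again Banach.

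The main obstacle is the injectivity of $P^{g+h}$ when $K_0 > 0$. From \eqref{eq:Weitz}, $P^g = \frac12(\Delta_1^g - 2K_0)$, and on the round $S^2$ the Hodge Laplacian on $1$-forms does have $2K_0$ in its spectrum (the eigenforms dual to the conformal vector fields), so $P^g$ is \emph{not} injective in the spherical case. This means the argument that $N^g(h)=0 \Rightarrow B^g(h)=0$ needs care precisely when $\gamma=0$: one must either quotient out the conformal Killing fields first, or argue that the relevant component of $B^g(h)$ vanishes for other reasons (e.g. by tracking the trace part separately via \eqref{eq:geindec} and \eqref{intertwine2}). For $K_0 \le 0$ there is no such issue: $\Delta_1^g \ge 0$ and $-2K_0 \ge 0$, so $P^g$ is injective, and indeed for $K_0<0$ one gets injectivity with a strict lower bound, which is why the $\gamma>1$ case is the cleanest. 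I would handle the genus $0$ and $1$ cases by explicitly invoking Propositions \ref{pr:kerLt} and \ref{pr:kerLtf} to pin down $\ker L^g$, and by noting that on $S^2$ the extra directions in $\ker P^g$ correspond exactly to the Möbius group, which acts by diffeomorphisms and is therefore already accounted for — so the statement of the proposition, which only asserts that $\calM_{K_0}$ is a \emph{Banach submanifold}, still holds once one absorbs these finitely many exceptional directions into the diffeomorphism factor.
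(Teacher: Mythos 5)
Your overall strategy is genuinely different from the paper's, and as written it has gaps that the paper's (much shorter) argument is specifically designed to avoid. The paper proves the proposition by looking only at the scalar curvature map $g'\mapsto K^{g'}$ and its restriction to the pure-trace (conformal) directions, where by \eqref{eq:DK} the linearization is $\tfrac12(\Delta^g-2K_0)$. For $K_0<0$ this is already surjective and the implicit function theorem finishes immediately; for $K_0=0$ and $K_0>0$ the finite-dimensional cokernels (constants, resp.\ the span of $x_1,x_2,x_3$) are removed by composing with a projection, and one then checks a posteriori that the projected equation $\Pi(K^{g'})=K_0$ forces $K^{g'}=K_0$ -- by Gauss--Bonnet in the flat case and by the Kazdan--Warner identity in the spherical case. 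No gauge fixing, no diffeomorphisms, no $N^g$ enter at all.

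The concrete gaps in your route are these. First, your claim that the implicit function theorem makes $\{N^g(h)=0\}$ a smooth manifold of dimension $\dim\ker L^g$ is not correct as stated: $L^g$ is self-adjoint elliptic of index zero with \emph{nontrivial} kernel (e.g.\ $\Stt\neq 0$ for $\gamma>1$, and the pure-trace kernel of Proposition \ref{pr:kerLt} for $K_0\geq 0$), hence nontrivial cokernel, so the IFT only produces a Kuranishi model: the zero set is cut out of a finite-dimensional slice by an obstruction map into $\operatorname{coker}L^g$, and you must prove that obstruction vanishes. Establishing unobstructedness is exactly what the paper's Gauss--Bonnet/Kazdan--Warner argument accomplishes, so it cannot be bypassed. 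Second, even granting a smooth finite-dimensional gauged zero set, the proposition asserts that the \emph{infinite-dimensional} space $\calM_{K_0}$ is a Banach submanifold; your final step identifying a neighbourhood of $g$ in $\calM_{K_0}$ with (diffeomorphisms)\,$\times$\,(zero set) is the flowbox theorem, which the paper proves \emph{after} and \emph{using} this proposition (it needs $T_g\calM_{K_0}=\ker DK^g$ and the decomposition lemma for $S^2_0$), so invoking it here is at best a large amount of deferred work and at worst circular. Third, you correctly flag that $P^{g+h}$ fails to be injective when $K_0>0$ (its kernel on the round sphere is the six-dimensional space dual to the conformal Killing fields), but the proposed remedies are left unexecuted, and this is precisely the case where your chain $N^g(h)=0\Rightarrow B^g(h)=0\Rightarrow E^g(h)=0$ breaks down.
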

\begin{proof} Following \cite{Tr}, let $\calU$ be a neighbourhood of $g$ in the space of $\calC^{2,\alpha}$ metrics, 
and observe that $\calU \ni g' \mapsto K^{g'} \in \calC^{0,\alpha}(\Si)$ is a smooth mapping. Next, by \eqref{eq:DK},
\begin{equation}
DK^g(f \cdot g) = \frac12 (\Delta^g - 2K_0)f \cdot g.
\label{restDK}
\end{equation}
If $K_0 < 0$, then $DK^g$ is surjective even when restricted to the subspace of pure trace tensors, so the implicit function theorem shows
that $\{g+h \in \calC^{2,\alpha}: |h|_g < \e, K^{g+h} = K_0\}$ is a Banach submanifold if $\e> 0$ is small enough.

If $K_0 = 0$, then the restriction of $DK^g$ to pure trace tensors is the scalar Laplacian, which has both kernel and cokernel 
identified with the space of constant functions. Furthermore, by the Gauss-Bonnet theorem, $\int_{\Si} K^{g'}\, dA_{g'} = 0$ 
for any metric $g'$. To compensate for the fact that the image satisfies a linear constraint which is determined by
the varying metric $g'$, let $\Pi_0^g$ denote the $L^2(dA_g)$ orthogonal projection from $\calC^{0,\alpha}$ onto the subspace 
$V_0^\perp$ of functions $f$ such that $\int_\Si f\, dA_g = 0$. We consider the restricted map $g' \longmapsto \Pi^g_0 (K^{g'}) $ from 
the space of all $\calC^{2,\alpha}$ metrics $g'$ with $\int_{\Si} dA_{g'} = \int_{\Si} dA_g$ to the subspace $V_0^\perp$. The intersection 
of the tangent space of the domain with the space of pure trace tensors equals $\{fg: \int_\Si f\, dA_g = 0\}$, 
and $DK^g$ is an isomorphism from this space onto $V_0^\perp$. The implicit function theorem may now be used exactly as before
to show that the set of nearby metrics $g'$ which satisfy $\Pi^g_0(K^{g'}) = 0$ is a Banach submanifold. However, clearly this equation implies that $K^{g'} = 0$ as well.

For the final case, recall that if $(\Si,g)$ is the round sphere with curvature $K_0$, then $2K_0$ is the first nonzero eigenvalue of 
$\Delta^g$, with corresponding eigenspace consisting of the restrictions of the linear coordinate functions on $\RR^3$.
The cokernel of $\Delta^g - 2K_0$ is three-dimensional and spanned by the space $V_1 := \{\iota^* dx_1, \iota^* dx_2, \iota^* dx_3\}$. 
Let $\Pi_1^g$ denote the $L^2$ orthogonal projection onto $V_1^\perp$; for simplicity we denote the range of this mapping
on $\calC^{0,\delta}$ also by $V_1^\perp$.  Very similarly to the case above, consider 
\[
\calM^{2,\delta} \ni g' \mapsto \Pi_1^g ( K^{g'}) \in V_1^\perp.
\]
The restriction of the linearization of this map to the space of pure-trace tensors at $g$ is 
$\Pi_1^g \circ \tfrac{1}{2}(\Delta^g - 2K_0)$, the image of which equals $V_1^\perp$. By the implicit function theorem,
\[
\calU \cap \calM^{2,\delta}_\cc  := \{g' = g+ h \in \calM^{2,\delta}(\Si): ||h||_{g; 2,\delta} < \epsilon,\  \Pi_1^g(K^{g'}) = K_0 \}
\]
is a Banach submanifold. The elements $g'$ in this set are precisely the metrics with $K^{g'} = K_0 + \sum_{j=1}^3 a_j x_j$. However, 
the well-known Kazdan-Warner condition states that if the function $K$ is the scalar curvature
of any metric on $S^2$, then  
\[
\int_{S^2} \nabla K \cdot \nabla x_i\, dA_g = 0, \quad i = 1, 2, 3.
\]
Hence if $K^{g'}$ has this form, then the coefficients $a_i$ all vanish, and so $K^{g'} = K_0$. 
We conclude that the space of metrics  $g'$ near to $g$ with $K^{g'} = K_0$ is a smooth submanifold
of $\calM^{2,\delta}(\Si)$. 
\end{proof}

We have now proved that if $K^g = K_0$, then 
\begin{equation}
\begin{split}
T_g \calM_{K_0} & = \{h: DK^g(h) = 0\} = \\ 
&  \{h = h^0 + f \cdot g:  \tr^g h^0 = 0, \, (\Delta^g - 2K_0)f + \delta^g \delta^g h^0  = 0\}.
\end{split}
\label{eq:tanmet}
\end{equation}
It follows from this that if $K_0 < 0$, then the trace coefficient $f$ is determined uniquely by 
the trace-free part $h^0$. If $K_0 \geq 0$, then the projection of $f$ onto the orthogonal complement
of the nullspace of $\Delta^g - 2K_0$ is determined by $h^0$. 

Equation \eqref{eq:tanmet} can be sharpened using the following decomposition.
\begin{lemma}
If $\Si$ is a smooth compact surface, and $g$ is any $\calC^{2,\alpha}$ metric on it, then 
\[
\calC^{2,\alpha}(\Si; S^2_0 T^* \Si) = \operatorname{ran} \calD^g \oplus \operatorname{ker} \delta^g.
\]
\end{lemma}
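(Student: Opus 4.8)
The plan is to prove this as an $L^2$-orthogonal decomposition that persists at the level of Hölder spaces by elliptic theory. The natural inner product is the $L^2(dA_g)$ pairing on symmetric trace-free $2$-tensors, and the key point is that $\calD^g$ and $\delta^g$ (restricted to trace-free tensors) are formal adjoints of one another, up to the conventions already fixed in \S\ref{ss:lin_curv}. Indeed $\calD^g$ is the conformal Killing operator $\Omega^1 \to \calC^\infty(\Si; S^2_0)$ and $\delta^g$ here means the divergence composed with projection to trace-free tensors; integrating by parts gives $\langle \calD^g \om, \kappa\rangle = \langle \om, \delta^g \kappa\rangle$ for $\om \in \Omega^1$, $\kappa \in S^2_0$. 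Hence, at least formally, $\ker \delta^g$ is the orthogonal complement of $\ran \calD^g$, and one has $\calC^\infty(\Si; S^2_0) = \ran \calD^g \oplus \ker\delta^g$ once the relevant operators are known to have closed range.

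The key steps, in order, are as follows. First I would note that the composition $\delta^g \calD^g : \Omega^1 \to \Omega^1$ is a second-order, self-adjoint, non-negative elliptic operator: indeed $\langle \delta^g \calD^g \om, \om\rangle = \|\calD^g \om\|^2 \geq 0$, and its symbol is (a multiple of) that of the Hodge Laplacian since $\calD^g$ is overdetermined elliptic with injective symbol. Its kernel is the space of conformal Killing fields, which is finite-dimensional. Second, standard elliptic theory (e.g. the Fredholm alternative for $\delta^g\calD^g$ acting $\calC^{2,\al} \to \calC^{0,\al}$, or rather between the appropriate pair of Hölder spaces) gives the $L^2$-orthogonal Hodge-type decomposition $\Omega^1 = \ran(\delta^g\calD^g) \oplus \ker(\delta^g\calD^g) = \ran(\delta^g\calD^g) \oplus \ker \calD^g$, with all summands consisting of $\calC^{2,\al}$ forms. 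Third, I would push this up to symmetric $2$-tensors: given $\kappa \in \calC^{2,\al}(\Si; S^2_0)$, we have $\delta^g \kappa \in \calC^{1,\al}(\Si;\Omega^1)$, and I want to solve $\delta^g \calD^g \om = \delta^g \kappa$. This is solvable precisely when $\delta^g \kappa \perp \ker\calD^g$; but for $\om_0 \in \ker\calD^g$ we have $\langle \delta^g \kappa, \om_0\rangle = \langle \kappa, \calD^g \om_0\rangle = 0$, so the compatibility condition holds automatically. Solving it produces $\om$, and then $\kappa = \calD^g \om + (\kappa - \calD^g\om)$ with $\delta^g(\kappa - \calD^g\om) = 0$; this is the desired splitting, and the summands are $\calC^{2,\al}$ by elliptic regularity (one gains back the two derivatives since $\calD^g\om$ solves an elliptic equation with $\calC^{0,\al}$ right-hand side). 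Fourth, I would check directness: if $\calD^g\om = \kappa_0 \in \ker\delta^g$, then $\delta^g\calD^g\om = 0$, so $\om \in \ker\calD^g$ and $\kappa_0 = 0$.

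The main obstacle, and the only place requiring genuine care, is the low-genus cases $\gamma = 0, 1$ where $\ker\calD^g$ is nontrivial (dimension $6$ for $S^2$, dimension $2$ for the torus), so $\calD^g$ is not injective and $\delta^g\calD^g$ is not invertible. The decomposition still holds exactly as stated — $\ran\calD^g$ is what it is, regardless of whether $\calD^g$ is injective — but the argument must route through the Fredholm theory of $\delta^g\calD^g$ rather than its invertibility, using the automatic orthogonality $\ran(\delta^g\calD^g) \perp \ker\calD^g$ noted above to verify solvability. The one technical subtlety is matching the Hölder exponents and orders through the two applications of elliptic regularity (first for $\delta^g\calD^g\om = \delta^g\kappa$, gaining one derivative; then observing $\calD^g\om \in \calC^{1,\al}$ a priori but actually $\calC^{2,\al}$ because $\delta^g(\calD^g\om - \kappa) = 0$ and $\tr^g(\calD^g\om) = 0$ force it to be an honest symmetric tensor in the right space) — this is routine but should be stated so the reader sees both summands genuinely lie in $\calC^{2,\al}(\Si; S^2_0 T^*\Si)$.
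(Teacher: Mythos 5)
Your argument is correct and follows essentially the same route as the paper: both reduce the splitting to solving $\delta^g\calD^g\omega = \delta^g\kappa$ for the elliptic, self-adjoint operator $P^g = \delta^g\calD^g$, with solvability guaranteed because $\delta^g\kappa$ is automatically orthogonal to $\ker P^g = \ker\calD^g$ by the adjoint relation. You spell out the compatibility condition and the directness check a bit more explicitly than the paper does, but the substance is identical.
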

\begin{proof}
The operator $\delta \calD = P$ is elliptic and has a generalized inverse $G$ which is a pseudodifferential operator
of order $-2$ and satisfies $GP = PG = \mathrm{Id} - \Pi$, where $\Pi$ is the projector onto the finite dimensional 
nullspace of $P$ (this nullspace is trivial when $K_0 < 0$).  Then $G' = G \delta \in \Psi^{-1}$ is a left inverse for 
$\calD$, up to a finite rank error. Since $G': \calC^{1,\alpha}(S^2_0 T^* \Si) \to \calC^{2,\alpha}(T^* \Si)$ is bounded, we 
conclude that $\calD: \calC^{2,\alpha} \to \calC^{1,\alpha}$ has closed range.  

Now suppose that $h \in \calC^{2,\alpha}(S^2_0 T^* \Si)$. We seek $\omega \in \calC^{3,\alpha}(T^* \Si)$ and 
$\kappa \in \Stt$ such that $h = \calD \omega + \kappa$.  To find $\omega$, we solve
$\delta (h - \calD \omega) = 0$, or equivalently, $P \omega = \delta h$. Fortunately this is always possible
since $\delta h \in \mathrm{ran}\, P$.   Thus $\kappa := h - \calD \omega$ is both trace-free and divergence-free,
as required. 
\end{proof}

Let us apply this to the trace-free part $h^0$ of an element $h \in T_g \calM_{K_0}$. Writing
$h^0 = \calD \omega + \kappa$, then $\kappa \in \Stt$ and $(\Delta - 2K_0) f + \delta \delta \calD \omega = 0$;
using \eqref{intertwine2}, this last condition is equivalent to
\[
(\Delta - 2K_0) f + \delta P \omega = 0  \Longleftrightarrow  (\Delta - 2K_0)( f  + \frac12 \delta \omega) = 0.
\]
Thus in the case that $\Delta - 2K_0$ is invertible, then elements of $T_g \calM_{K_0}$ are determined by the
choice of any pair of elements $\omega \in \calC^{3,\alpha}(\Si; T^* \Si)$ and $\kappa \in \Stt$. When $K_0 = 0$,
then an extra constant is needed, while if $K_0 > 0$ then one must also include an element of $V_1$. 

The gauged deformation theory is now an easy consequence. 
\begin{proposition}
The intersection
\[
\calS_{g,\e} := \calM_{K_0} \cap \{ g + h: ||h||_g < \e, \ B^g h = 0 \}
\]
is transverse at $g$. It is contained in the space of all solutions $h$ to $N^g(h) = 0$ with $||h||_g < \e$ (it is equal
to it when $K_0 < 0$). In particular, metrics in $\calS_{g,\e}$ are smooth. Furthermore, $T_g \calS_{g,\e} = \Stt$ when $K_0 < 0$, $\Stt \oplus \RR$ when $K_0 = 0$, and
$\mbox{span}\,\{x_1, x_2, x_3\}$ when $K_0 > 0$. 
\end{proposition}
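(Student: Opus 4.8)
The plan is to analyze the linearized system $(DE^g, B^g)$ on the slice and then upgrade to the nonlinear statement via the Bianchi-gauge trick, mirroring the smooth-manifold bootstrap that has just been set up. First I would compute the tangent space of the intersection at $g$. An element $h$ tangent to $\calS_{g,\e}$ must satisfy $B^g h = 0$ and $DK^g(h) = 0$; writing $h = h^0 + f\cdot g$, the first condition is $\delta^g h^0 + df = 0$ (using \eqref{eq:bianchi-trace}, the trace part of $B^g$ is pure gradient), and the second is $(\Delta^g - 2K_0)f + \delta^g\delta^g h^0 = 0$. Applying $\delta^g$ to $B^g h = 0$ gives $\delta^g\delta^g h^0 + \Delta^g f = 0$, so subtracting yields $2K_0 f = 0$. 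Hence when $K_0 < 0$ we get $f \equiv 0$, so $h = h^0$ with $\delta^g h^0 = 0$ and $\tr^g h^0 = 0$, i.e.\ $h \in \Stt$; conversely every $\kappa \in \Stt$ satisfies $B^g\kappa = 0$ and, by Proposition~\ref{pr:kerLtf}, $L^g\kappa = 0$, and it lies in $T_g\calM_{K_0}$ by \eqref{eq:DK}. When $K_0 = 0$ the relation $2K_0 f = 0$ is vacuous, but $\delta^g h^0 = -df$ forces (pairing against $f$) $\|df\|^2 = -\langle \delta^g h^0, df\rangle$... more simply, $(\Delta^g - 2K_0)f + \delta^g\delta^g h^0 = 0$ with $K_0 = 0$ and $\delta^g h^0 = -df$ gives $\Delta^g f - \Delta^g f = 0$, so $f$ is an arbitrary harmonic, hence constant, function, and $\delta^g h^0 = -df = 0$ forces $h^0 \in \Stt$; this gives $T_g\calS_{g,\e} = \Stt \oplus \RR$. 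When $K_0 > 0$ and $\Si = S^2$, $2K_0 f = 0$ forces $f = 0$ again, so $\delta^g h^0 = 0$, $\tr^g h^0 = 0$, but $\Stt = \{0\}$ since $\gamma = 0$; the extra $\mathrm{span}\{x_1,x_2,x_3\}$ must therefore come not from the naive linearized system but from the degeneracy in the definition of $\calM_{K_0}$ via the projection $\Pi_1^g$, exactly as in the proof of Proposition~7 above — the genuine tangent directions to $\calM_{K_0}$ at the round sphere are the pure-trace tensors $x_i \cdot g$, and one checks $B^g(x_i g) = 0$, so these lie in the slice.

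Next I would establish transversality. The claim is that the map sending $h$ near $0$ in the $B^g$-gauge space to $\bigl(\Pi(K^{g+h}), B^g h\bigr)$ (with $\Pi$ the appropriate projection from the proof of Proposition~7, i.e.\ $\Pi = \mathrm{Id}$ when $K_0 < 0$, $\Pi^g_0$ when $K_0 = 0$, $\Pi^g_1$ when $K_0 > 0$) has surjective differential at $h = 0$ onto the relevant closed subspace. Transversality reduces to showing that $DK^g$ restricted to the $B^g$-gauge slice still hits the cokernel complement; but for pure-trace variations $f\cdot g$ with $B^g(fg) = 0$ automatically (by \eqref{eq:bianchi-trace}), and $DK^g(fg) = \tfrac12(\Delta^g - 2K_0)f$ still surjects onto the appropriate complement of its cokernel by the same spectral facts used in Proposition~7. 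So the intersection is cut out transversally and is a smooth submanifold of the stated dimension.

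Then I would identify $\calS_{g,\e}$ with the nullspace of $N^g$ near $g$ and deduce smoothness. If $h \in \calS_{g,\e}$, then $B^g h = 0$ and $K^{g+h} = K_0$ (the projected equation plus the Kazdan–Warner / Gauss–Bonnet obstruction argument forces $K^{g+h} = K_0$ exactly, as in Proposition~7), hence $E^g(h) = 0$ and $N^g(h) = E^g(h) + (\delta^{g+h})^* B^g(h) = 0$. Conversely, any solution of $N^g(h) = 0$ with $\|h\|_g$ small satisfies $P^{g+h} B^g h = 0$ (apply $B^{g+h}$ as recorded in \S 3.1); when $K_0 < 0$, $P^{g+h}$ is invertible for $h$ small since $P^g = \tfrac12((\nabla^g)^*\nabla^g - \Ric^g)$ is positive (Ricci negative), so $B^g h = 0$ and hence $h \in \calS_{g,\e}$ — this is the equality claim. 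For $K_0 \geq 0$, $P^{g+h}$ may have kernel, so one only gets the inclusion $\calS_{g,\e} \subseteq \{N^g(h) = 0\}$. Since $L^g = DN^g|_0$ is elliptic (it is $\tfrac12(\nabla^*\nabla - 2\circR^g)$, a Laplace-type operator), elliptic regularity applied to $N^g(h) = 0$ bootstraps any $\calC^{2,\alpha}$ solution to be smooth, giving smoothness of all metrics in $\calS_{g,\e}$.

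The main obstacle is the $K_0 > 0$ case: there the kernel of $L^g$ on pure-trace tensors is the three-dimensional eigenspace $\{x_i g\}$ of Proposition~\ref{pr:kerLt}, so the "slice" is not transverse to the diffeomorphism action in the naive sense, and $N^g = 0$ has a three-parameter family of extra solutions not killing $K^{g+h} - K_0$. One must handle this exactly as in Proposition~7: work with the projected curvature map, invoke the Kazdan–Warner identity to show the projected solutions actually have $K^{g+h} = K_0$, and check by direct computation that $B^g(x_i g) = 0$ so that these genuine deformations of $\calM_{K_0}$ already satisfy the Bianchi gauge. Verifying that the three would-be obstruction directions are all simultaneously realized — i.e.\ that $\calM_{K_0}$ near the round sphere really is three-dimensional and sits inside the gauge slice — is the delicate point; everything else is a routine transcription of the smooth-surface arguments already given.
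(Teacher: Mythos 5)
There is a genuine error at the heart of your tangent-space computation. You assert that for $h = h^0 + f\cdot g$ the gauge condition reads $\delta^g h^0 + df = 0$, ``the trace part of $B^g$ being pure gradient.'' This misreads \eqref{eq:bianchi-trace}: that identity says precisely that $B^g(fg) = \delta^g(fg) + df = -df + df = 0$, i.e.\ the Bianchi operator \emph{annihilates} pure-trace tensors, so $B^g(h^0 + fg) = \delta^g h^0$ and the gauge condition is simply $\delta^g h^0 = 0$. Your subsequent manipulation ``apply $\delta^g$ and subtract to get $2K_0 f = 0$'' is therefore spurious. The correct system is $\delta^g h^0 = 0$ (so $h^0 \in \Stt$) together with $DK^g(h)=0$, which by \eqref{eq:DK} collapses to $(\Delta^g - 2K_0)f = 0$; this single equation yields all three cases uniformly: $f=0$ for $K_0<0$, $f$ constant for $K_0=0$, and $f \in \operatorname{span}\{x_1,x_2,x_3\}$ for the round sphere. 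Your derivation happens to land on the right answer for $K_0 \leq 0$, but for $K_0>0$ it forces $f=0$ and hence $T_g\calS_{g,\e} = \{0\}$, which contradicts the statement; the patch you then offer (that the $x_i\cdot g$ directions arise from ``the degeneracy in the definition of $\calM_{K_0}$ via $\Pi_1^g$'' rather than from the linearized system) is not correct --- these directions satisfy the naive linearized system exactly, since $DK^g(x_i g) = \tfrac12(\Delta^g - 2K_0)x_i = 0$ and $B^g(x_i g)=0$. Note also that your closing observation ``one checks $B^g(x_i g) = 0$'' contradicts your own formula $B^g(fg)=df$; the two cannot both hold.

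Once the formula for $B^g$ on the trace part is corrected, the rest of your architecture is sound and matches the paper's: transversality follows because pure-trace tensors lie in $\ker B^g$ and $(\Delta^g - 2K_0)$ surjects onto the appropriate complement of its cokernel (with the $K_0>0$ case using that $\delta^g\delta^g k^0$ is $L^2$-orthogonal to the $x_j$, since $(\delta^g)^* dx_j$ is pure trace); the inclusion into $\{N^g(h)=0\}$ and the equality for $K_0<0$ via invertibility of $P^{g+h}$, and smoothness by elliptic regularity for the quasilinear elliptic operator $N^g$, are all as in the paper.
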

\begin{proof}
To check transversality, we must show that if $k \in \calC^{2,\alpha}(\Si; S^2T^* \Si)$, then 
$k = k_1 + k_2$ where $k_1 \in T_g \calM_{K_0}$ and $ B^g (k_2) = 0$.  However, recall that the kernel of $B^g$
contains $\phi g$ for every $\phi \in \calC^{2,\delta}$, so writing $k = f g + k^0$ where $k^0$ is trace-free, 
it suffices to choose $\phi$ so that $(\Delta^g - 2K_0)( f - \phi) + \delta^g \delta^g k^0 = 0$.
This is certainly possible when $K_0 < 0$, and since $\int_\Si \delta^g \delta^g k^0 = 0$, it is also
possible when $K_0 = 0$. When $K_0 > 0$, this equation can be solved provided $\delta^g \delta^g k^0$
is orthogonal to the span of the restrictions of the linear functions $\{x_1, x_2, x_3\}$.  However, this is true since
\[
\langle \delta^g \delta^g k^0 , x_j \rangle = \langle k^0,  (\delta^g)^* d x_j \rangle = 0;
\]
this last equality holds because $dx_j$ is (dual to) a conformal Killing field, so $(\delta^g)^* dx_j = -x_j g$ 
is pure trace. 

Next, if $h = f g + h^0 \in T_g \calS_{g,\e}$, then $B^g h = 0$, which means that $\delta^g h^0 = 0$, i.e., 
$h^0 \in \Stt$. In addition $(\Delta^g - 2K_0) f=0$, which leads to the three cases in the statement of the result. 

Finally, if $g + h\in \calS_{g,\e}$, then $g+h$ has constant curvature $K_0$ by definition, and clearly $N^g(h) = 0$. 
Conversely, if $||h|| < \e$ and $N^g(h) = 0$, then by the argument at the very end of \S 3.1, if $K_0 < 0$ then 
$B^g(h) = 0$ and $g+h$ has constant curvature $K_0$. Finally, since $N^g$ is an elliptic, quasilinear operator, $N^g(h)=0$ implies that $h$ is smooth.
\end{proof} 

To complete this picture, we show that $\calS_{g,\e}$ is a slice for the diffeomorphism action.
\begin{proposition}
If $\e$ is sufficiently small and if $\gamma > 1$, then in some neighbourhood $\calU$ of $\mathrm{Id} \in 
\operatorname{Diff}^{3,\alpha}(\Si)$, the map
\begin{equation}
\calU \times \calS_{g,\e} \ni (F, h) \longrightarrow F^*(g + h)
\label{eq:flowboxdef}
\end{equation}
is a local diffeomorphism onto a neighbourhood of $g$ in $\calM_{K_0}$. 
\end{proposition}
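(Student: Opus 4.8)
The plan is to run the standard slice/flowbox argument via the inverse function theorem, checking that the differential of the map \eqref{eq:flowboxdef} at $(\mathrm{Id}, 0)$ is an isomorphism onto $T_g\calM_{K_0}$, and then noting that $\calM_{K_0}$ is a Banach manifold (Proposition 7) so that a local diffeomorphism of Banach manifolds is obtained. First I would compute the differential. The derivative in the $\calS_{g,\e}$-direction is just the inclusion of $T_g\calS_{g,\e}$; the derivative in the $\Diff$-direction at $\mathrm{Id}$ is $X \mapsto \calL_X g = 2(\delta^g)^*\omega$, where $\omega$ is the $1$-form dual to $X$ (here $X \in \calC^{3,\alpha}$ so $\omega \in \calC^{3,\alpha}$, and $\calL_X g \in \calC^{2,\alpha}$). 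So the differential is
\[
(\omega, k) \longmapsto 2(\delta^g)^*\omega + k, \qquad \omega \in \calC^{3,\alpha}(T^*\Si),\ k \in T_g\calS_{g,\e}.
\]
I must show this is a bijection onto $T_g\calM_{K_0}$, which by \eqref{eq:tanmet} consists of all $h = h^0 + f\cdot g$ with $(\Delta^g - 2K_0)f + \delta^g\delta^g h^0 = 0$; and I have $T_g\calS_{g,\e} = \Stt$ since $\gamma > 1$ and $K_0 < 0$ (for $\gamma > 1$ the constant curvature metric is hyperbolic, so $K_0 < 0$).

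For surjectivity: given $h = h^0 + f\cdot g \in T_g\calM_{K_0}$, apply the Hodge-type decomposition of Lemma 2 to write $h^0 = \calD^g\omega + \kappa$ with $\kappa \in \Stt$, $\omega \in \calC^{3,\alpha}$. Then $2(\delta^g)^*\omega = 2\calD^g\omega + \tfrac{?}{}$; more precisely $(\delta^g)^*\omega = \calD^g\omega - \tfrac12(\delta^g\omega)g$ (the trace-free part plus a pure-trace correction), so $2(\delta^g)^*\omega = 2\calD^g\omega - (\delta^g\omega)g$. Adjusting $\omega$ by a factor and comparing trace parts, I need the trace coefficient to match: using the intertwining identity \eqref{intertwine2} exactly as in the text's discussion following Lemma 2, the constraint $(\Delta^g - 2K_0)f + \delta^g\delta^g h^0 = 0$ becomes, after substituting $h^0 = \calD^g\omega + \kappa$, the identity $(\Delta^g - 2K_0)(f + \tfrac12\delta^g\omega) = 0$, which forces $f = -\tfrac12\delta^g\omega$ when $K_0 < 0$ (invertibility of $\Delta^g - 2K_0$). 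Hence $h = (\delta^g)^*\omega + \kappa$ up to rescaling $\omega$ by $2$ — i.e. $h$ is in the image with $k = \kappa$. For injectivity: if $2(\delta^g)^*\omega + \kappa = 0$ with $\kappa \in \Stt$, take the trace-free part to get $2\calD^g\omega + \kappa = 0$ and the trace part to get $\delta^g\omega = 0$, hence $\calD^g\omega = -\tfrac12\kappa \in \Stt$, so $\calD^g\omega$ is both in $\ran\calD^g$ and in $\ker\delta^g$; by Lemma 2 this intersection is trivial, so $\calD^g\omega = 0$ and $\kappa = 0$; then $(\delta^g)^*\omega = 0$ forces $\omega$ to be dual to a Killing field, but a hyperbolic surface of genus $> 1$ has no Killing fields, so $\omega = 0$.

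The main obstacle is bookkeeping the regularity and the pure-trace corrections cleanly — i.e. keeping track that $(\delta^g)^*$ versus $\calD^g$ differ by a pure-trace term and that this is exactly absorbed by the freedom in the trace coefficient $f$, together with confirming that the image of the differential is precisely $T_g\calM_{K_0}$ and not something larger (this is where one uses that $g+h$ stays constant curvature, encoded in \eqref{eq:tanmet}). Once the differential is shown to be a Banach space isomorphism onto $T_g\calM_{K_0}$, the inverse function theorem for the smooth map $\calU \times \calS_{g,\e} \to \calM_{K_0}$ (smoothness of $(F,h) \mapsto F^*(g+h)$ being standard, as $F \in \Diff^{3,\alpha}$ acts boundedly on $\calC^{2,\alpha}$ tensors) yields that \eqref{eq:flowboxdef} is a local diffeomorphism onto a neighbourhood of $g$ in $\calM_{K_0}$.
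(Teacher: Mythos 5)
Your argument follows the paper's proof essentially verbatim in its main step: the paper likewise reduces the statement to showing that the linearization $(\om,\kappa)\mapsto(\delta^g)^*\om+\kappa$ is an isomorphism onto $T_g\calM_{K_0}$, and invokes the decomposition $h=(-\tfrac12\delta(\om)g+\calD\om)+\kappa$ established in the discussion after Lemma~2 --- which is exactly the surjectivity/injectivity computation you spell out (the overall factor of $2$ from $\calL_Xg=2(\delta^g)^*\om$ is harmless). Your injectivity argument via $\ran\calD\cap\ker\delta^g=\{0\}$ (equivalently, injectivity of $P^g$ for $K_0<0$) and the absence of conformal Killing fields in genus $>1$ is the intended content of the word ``uniquely'' in the paper's one-line citation.

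The one place where your justification is wrong, although the conclusion survives, is the smoothness of $(F,h)\mapsto F^*(g+h)$. This is \emph{not} standard on the grounds that $F\in\Diff^{3,\alpha}$ acts boundedly on $\calC^{2,\alpha}$ tensors: differentiating in the $F$-direction produces Lie derivatives $\calL_X(g+h)$, which cost a derivative of the metric, so on $\Diff^{3,\alpha}\times\calM^{2,\alpha}$ the pullback map is merely continuous, not even $\calC^1$, and the inverse function theorem could not be applied. The paper's proof opens precisely by observing that every element of $\calS_{g,\e}$ is $\calC^\infty$ (elliptic regularity for the quasilinear equation $N^g(h)=0$, from the preceding proposition), and it is this smoothness of the slice elements that makes \eqref{eq:flowboxdef} a $\calC^\infty$ map. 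You should replace your ``acts boundedly'' remark with this observation; everything else in your proposal is correct and coincides with the paper's route.
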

\begin{proof}
Every element of $\calS_{g,\e}$ is $\calC^\infty$, which implies that \eqref{eq:flowboxdef} is a $\calC^\infty$ 
mapping.  Thus it suffices to check that its linearization 
\[
\calC^{3,\alpha}(\Si,T^*\Si) \times T_g \calS_{g,\e} \ni (\om,\kappa) \longmapsto (\delta^g)^* \om + \kappa 
\in T_g \calM_{K_0}
\]
is an isomorphism. However, we have already shown how any $h = fg + h^0 \in T_g \calM_{K_0}$ 
uniquely decomposes as $ (-\frac12 \delta (\omega) g + \calD \omega) + \kappa$, as required.
\end{proof}
The action of the entire diffeomorphism group is proper and the action of the identity component of the diffeomorphism group 
is also free. This is proved in \cite[Section 2.3]{Tro}, but is discussed further and generalized at the end of \S 6 below.

\section{Elliptic operators and conic singularities}\label{s:elliptic_theory}
To generalize the results in the last section to surfaces with conic singularities, we take a brief detour to describe 
linear elliptic theory of conic operators. This is a well developed subject, and much of what is needed here could be carried 
out `by hand', but we rely on a more systematic approach using the calculus of $b$-pseudodifferential operators,
and quote the results we need from \cite{Ma}. 

Since we need to apply these results to several Laplace-type operators, we state results from the general theory
for a `generic' operator of the form
\[
A = \nabla^* \nabla  + B,
\]
acting on sections of a Hermitian vector bundle $E$ over a conic surface $(\Sip,g)$. Here $\nabla$ is a 
Hermitian connection and $B$ is a Hermitian bundle map.  As usual, the model conic metric is 
$g_\be = dr^2 + (1+\be)^2 r^2 dy^2$, and we assume that $\lim_{r \to 0} r^2 B = \tilde{B}$.  Under natural
geometric hypotheses, $\nabla$ is a conic operator, i.e., there exists a smooth basis of section $\sigma_j$ 
for $E$ such that $\nabla_{r\del_r}\sigma_j$ and $\nabla_{\del_y}\sigma_j$ are smooth (as functions of $(r,y)$) 
linear combinations of the $\sigma_i$. In the cases we consider, $E$ is a subbundle of some tensor bundle 
and $\nabla$ is the Levi-Civita connection, and this condition is easily checked to be satisfied for sections 
which are linear combinations of tensor products of the basic vector fields and $1$-forms
\[
\del_r, \frac{1}{r}\del_y, \ \ \mbox{and}\ \  dr,\, r\,dy.
\]
In a local trivialization of this type 
\begin{equation}
A = -\del_r^2 - \frac{1}{r}\del_r + \frac{1}{r^2}\tilde{A} + Q 
\label{typicalop}
\end{equation}
where $\tilde{A}$ is the $r$-independent `tangential operator', acting on sections of $\tilde{E}$,
the restriction of $E$ to the $S^1$ cross-section, and $Q$ is a lower order error in the sense that 
it vanishes in any limit of homothetic rescalings $(r,y) \mapsto (\lambda r, y)$, $\lambda \to \infty$. 
We denote by $A_0$ this operator with the remainder term $Q$ omitted, so that $A_0$ is exactly dilational
invariant. 
 
\subsection{Function spaces}
We now introduce the weighted  $b$-Sobolev and $b$-H\"older spaces, which are based on differentiations with respect
to the $b$ vector fields $r\del_r$,  $\del_y$.  Thus for $m \in {\mathbb N}$, define 
\[ 
H^m_b(\Sip,dA_g) = \{u: (r\del_r)^j\del_y^\ell u \in L^2(\Sip,dA_g),\ j+\ell \leq m \}, 
\]
and for any $\mu \in \RR$, 
\[
r^\mu H^m_b(\Sip,dA_g) = \{u = r^\mu \tilde{u}: \tilde{u} \in H^m_b(\Sip,dA_g)\}.
\]  
The fact that the normal derivative is with respect to $r\del_r$ rather than $\del_r$ means, for example, that if $\gamma > -1$ 
then $r^\gamma \in H^m_b(\Sip,dA_g)$ for every $m \geq 0$, whereas $r^\gamma$ lies in the ordinary Sobolev space
$H^m(\Sip,dA_g)$ only if $\gamma -m > -1$.

Next define $\calC^{0,\al}_b(\Sip)$ to equal the space of functions on $\Sip$ in $\calC^{0,\al}(\Sip)$ such that near each $p_j$,
\[ 
\sup_{0 < R < R_0} \sup_{R \leq r,r' \leq 2R} 
\frac{|u(r,y) - u(r',y')|R^\al}{|(r,y)-(r',y')|^\al} \leq C; 
\] 
$\calC^{m,\al}_b(\Sip)$ consists of all functions $u$ such that (near each $p_j$),  $(r\del_r)^j \del_y^\ell u \in
\calC^{0,\al}_b$. Finally, $r^\mu \calC^{m,\al}_b(\Sip) = \{u = r^\mu v: v \in \calC^{m,\al}_b(\Sip)\}$. 

All of these definitions continue to make sense for operators acting between sections of bundles. For notational
convenience, we describe the general results below only for scalar operators, but everything works directly for systems as well.

\subsection{Polyhomogeneity and indicial roots}
The most natural replacement for smooth functions in this setting involves the notion of polyhomogeneity. 
A function $u$ is polyhomogeneous with index set $I = \{\gamma_i,N_i\} \subset \CC \times \NN$ if $u \in
\calC^\infty(\Sip) $ and near each $p_j$, 
\[ 
u \sim \sum_{i} \sum_{\ell = 0}^{N_i} u_{i,\ell}(y) r^{\gamma_i} (\log r)^\ell.
\] 
An index set is a countable collection of pairs $(\gamma_i, N_i)$ such that $\mathrm{Re}\, \gamma_i \to \infty$ 
(or equivalently, there are only finitely many $\gamma_i$ in each left half-plane $\mbox{Re}\, \gamma < C$). 
This is convenient notation for keeping track of which exponents appear in this expansion. 

To see how such functions arise naturally in the present setting, consider the simple ``monomial'' solutions of 
the model operator $A_0$. 
\begin{definition} A number $\gamma \in \CC$ is called an indicial root for $A$ at $p_j$
of multiplicity $N$ if there exists some $\phi \in \calC^\infty(S^1;\tilde{E})$ such that 
$A(r^\gamma (\log r)^N \phi(y)) = \calO(r^{\gamma - 1}(\log r)^N)$ (as opposed to the expected 
singularity $\calO(r^{\gamma-2}(\log r)^N)$), but that the same is not true if $N$ is replaced by $N+1$.

Denote by $\Gamma(A)$ the set of all indicial roots of $A$ (over all $p_j$) and by 
$\tilde{\Gamma}(A)$ the set $\{(\gamma,N): \gamma \in \Gamma(A),\ N = 
\mbox{logarithmic multiplicity at}\ \gamma\}$.
\end{definition} 
It follows directly from this definition that if $A$ is as in \eqref{typicalop}, then $\gamma \in \Gamma(A)$ if and only 
if there exists a section $\phi$ of $\tilde{E}$ such that 
\[
(\tilde{A} - \gamma^2)\phi = 0,
\]
or in other words, $\gamma^2 \in \mbox{spec}(\tilde{A})$ and $\phi$ is a 
corresponding eigensection. In particular, $\gamma \in \Gamma(A)$ if and only
if $-\gamma \in \Gamma(A)$. (This symmetry is due, of course, to the symmetry of
$A$ on $L^2$.) 

To simplify notation, we shall assume that all indicial roots are real; this 
is the case for the specific operators considered below. 

\subsection{Mapping properties}
We assume that our operators have polyhomogeneous coefficients.  For any $\mu \in \RR$, $m \in \NN$, it is straightforward that 
\begin{equation}
\begin{array}{rcl}
A: r^{\mu}H^{m+2}_b(\Sip;E) &\longrightarrow & r^{\mu-2}H^{m}_b(\Sip;E); \\
A: r^{\mu}\calC^{m+2,\al}_b(\Sip;E) &\longrightarrow &  r^{\mu-2}\calC^{m,\al}_b(\Sip;E)
\end{array} 
\label{eq:mash}
\end{equation}
are bounded mappings.  
\begin{proposition}[\cite{Ma} Theorem 4.4] The mappings \eqref{eq:mash} are Fredholm if and only if 
$\mu \notin \Gamma(A)$. 
\label{pr:fredsh} 
\end{proposition}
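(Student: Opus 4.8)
The plan is to reduce the statement to the corresponding result for the dilation-invariant model operator $A_0$ and then to apply the $b$-calculus machinery of \cite{Ma}. First I would pass to the Mellin transform in the radial variable: writing $r = e^{-t}$, the operator $r^2 A_0$ becomes, up to a reflection, an operator polynomial in $\del_t$ with coefficients the tangential operator $\tilde A$, namely (after conjugating by the weight $r^\mu$) the family $\zeta \mapsto \tilde A - (\zeta + i\mu')^2$ acting on $L^2(S^1;\tilde E)$, where $\mu'$ is the real shift corresponding to $\mu$. This operator family is invertible on the critical line precisely when $\zeta + i\mu'$ avoids the set of $\gamma$ with $\gamma^2 \in \spec(\tilde A)$, i.e.\ precisely when $\mu \notin \Gamma(A)$. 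Since $\tilde A$ is an elliptic self-adjoint operator on the compact manifold $S^1$ with discrete spectrum, this indicial family is a holomorphic family of Fredholm operators, invertible off a discrete set of horizontal lines; this is the standard elliptic b-theory input and is exactly \cite[\S 4]{Ma}.

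Next I would use this to construct a parametrix. When $\mu \notin \Gamma(A)$ one inverts the indicial family along the line $\mathrm{Im}\,\zeta = \mu'$, applies the inverse Mellin transform, and obtains a b-pseudodifferential operator $G_0$ of order $-2$ with $A_0 G_0 = \id$ and $G_0 A_0 = \id$ as maps between the appropriate weighted b-Sobolev (resp.\ b-H\"older) spaces, using the mapping properties \eqref{eq:mash} and the boundedness of b-$\Psi$DOs of order $-2$ between $r^\mu H^{m+2}_b$ and $r^{\mu-2} H^m_b$. For the full operator $A = A_0 + Q$, where by construction $Q$ is lower order in the homothetic rescaling sense, the composition $A G_0 = \id + Q G_0$ differs from the identity by $Q G_0$, which is a b-$\Psi$DO that gains an extra power of $r$ (equivalently, is a properly supported smoothing-type error on the cone face) and hence is compact on the relevant weighted spaces; similarly on the other side. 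This yields a two-sided parametrix modulo compact errors, so the maps \eqref{eq:mash} are Fredholm.

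For the converse — that $\mu \in \Gamma(A)$ forces the map to fail to be Fredholm — I would argue that when $\mu$ is an indicial root there is a section $\phi$ of $\tilde E$ with $(\tilde A - \mu'^2)\phi = 0$, and the functions $\chi(r)\, r^{\mu'}(\log r)^j \phi(y)$ (with $\chi$ a cutoff near a conic point and $0 \le j \le N$) lie, after the weight normalization, on the boundary between the kernel and the cokernel: perturbing the weight $\mu$ slightly up or down changes the index by the (nonzero) number of such model solutions, so at $\mu$ itself the range cannot be closed. This is the half of the statement that requires the most care, since one must rule out closed range rather than merely finite-dimensionality of kernel and cokernel; the cleanest route is to invoke the relative index theorem of the b-calculus, which computes the jump in the index across an indicial root in terms of the multiplicity of that root in the indicial family, and in particular shows the map cannot be Fredholm when the weight sits exactly on such a root. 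I expect this converse direction — specifically, producing the explicit obstruction to closed range from the model solutions — to be the main obstacle, though it is entirely standard in this framework and is covered by \cite[Theorem 4.4]{Ma}.
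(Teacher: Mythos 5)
The paper offers no proof of this proposition: it is quoted directly from \cite{Ma}, Theorem 4.4, as part of the background on elliptic conic operators. Your sketch is a correct and faithful outline of the $b$-calculus argument in that reference --- Mellin transform to the indicial family $\tilde A - \zeta^2$, inversion of that family off the discrete set $\Gamma(A)$ to build a parametrix modulo compact errors (with the perturbation $Q$ absorbed because it gains a power of $r$), and, for the converse, the failure of Fredholmness at an indicial weight deduced from the jump of the index across that weight together with the norm-continuity of the conjugated family $r^{-\mu}Ar^{\mu}$ --- so there is nothing to compare beyond noting that you have reconstructed the cited proof rather than found a new one.
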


Using the symmetry of $A$ with respect to the area form of $g$, it is straightforward to
check that if $\mu \notin \Gamma(A)$, then for any $m,m' \in \NN$, the cokernel of the mapping
\[
A: r^{\mu+1}H^{m+2}_b(\Sip;E) \rightarrow r^{\mu-1}H^{m}_b(\Sip;E)
\]
is naturally identified with the kernel of
\[
A: r^{-\mu+1}H^{m'+2}_b(\Sip;E) \rightarrow r^{-\mu-1}H^{m'}_b(\Sip;E).
\]
Even though we cannot directly argue using duality as with $b$-Sobolev spaces, a similar
statement is true in the H\"older setting. 
\begin{proposition} Suppose that $\mu$ is not an indicial root of $A$. Let $K_{-\mu}$ denote
the nullspace of $A$ on $r^{-\mu}\calC^{2,\al}_b(\Sip)$. If $f \in r^{\mu-2}\calC^{0,\alpha}_b$, 
then there exists an element $h \in K_{-\mu}$ such that $Au = f - h$ for some $u \in r^{\mu}\calC^{2,\al}_b$. 
In particular, if $K_{-\mu} = \{0\}$, then 
\[
 A: r^{\mu}\calC^{2,\al}_b(\Sip) \rightarrow r^{\mu-2}\calC^{0,\al}_b(\Sip).
\]
is surjective.
\label{pr:duality}
\end{proposition}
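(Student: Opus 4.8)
The plan is to deduce the Hölder statement from the already-available Sobolev Fredholm theory (Proposition \ref{pr:fredsh}) together with elliptic regularity, rather than trying to argue duality directly in Hölder spaces. First I would observe that the two scales are comparable in the sense that $r^\mu\calC^{2,\al}_b \hookrightarrow r^{\mu'}H^{2}_b$ for $\mu' < \mu + 1 - \dim/2$ (here $\dim = 2$, so $\mu' < \mu$), and conversely $r^{\mu'}H^{m}_b \hookrightarrow r^{\mu}\calC^{0,\al}_b$ once $m$ is large and $\mu' > \mu + 1 + \alpha$ (again with $\dim=2$). So given $f \in r^{\mu-2}\calC^{0,\al}_b$, in particular $f \in r^{\mu' - 2}H^{m}_b$ for suitable $m$ and for $\mu'$ slightly less than $\mu$ but still on the same side of the (discrete) indicial set, i.e. $\mu' \notin \Gamma(A)$ and $(\mu',\mu]\cap\Gamma(A)=\emptyset$.

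Next I would apply Proposition \ref{pr:fredsh} on the Sobolev scale at weight $\mu'$: the map $A\colon r^{\mu'}H^{m+2}_b \to r^{\mu'-2}H^{m}_b$ is Fredholm, with cokernel identified (via the pairing with the area form, as noted in the excerpt) with the finite-dimensional kernel $K_{-\mu'}$ of $A$ on $r^{-\mu'}H^{m'+2}_b$. Because there are no indicial roots in the interval $(\mu',\mu]$, a standard argument shows $K_{-\mu'} = K_{-\mu}$: any element of the kernel on the larger space $r^{-\mu'}\calC^{2,\al}_b \supset r^{-\mu}\calC^{2,\al}_b$ has a partial polyhomogeneous expansion whose leading exponents are indicial roots, and the absence of roots in that window forces it to already decay at the rate $r^{-\mu}$. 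Thus I can split off a finite-dimensional piece $h\in K_{-\mu}$ so that $f - h$ lies in the range of $A$ on the Sobolev scale: $Au = f - h$ with $u\in r^{\mu'}H^{m+2}_b$.

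It then remains to upgrade $u$ to $r^{\mu}\calC^{2,\al}_b$. This is pure local elliptic regularity for the $b$-operator $A$: since $f - h \in r^{\mu-2}\calC^{0,\al}_b$ (note $h$, being a sum of solutions, is polyhomogeneous and decays at rate $r^{-\mu}$, which with $\mu>0$... one should be slightly careful here — actually $h \in K_{-\mu}$ decays like $r^{-\mu}$, not $r^{\mu}$, but the equation $Au = f-h$ still has right-hand side in $r^{\mu-2}\calC^{0,\al}_b$ precisely because that is where $f$ lives and we are choosing the decomposition so that $f - h$ lands there), the $b$-elliptic Schauder estimate $\|u\|_{r^{\mu}\calC^{2,\al}_b} \le C(\|Au\|_{r^{\mu-2}\calC^{0,\al}_b} + \|u\|_{r^{\mu-\e}H^{0}_b})$, valid because $\mu \notin \Gamma(A)$, promotes the Sobolev solution to a Hölder solution at the correct weight. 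Here one invokes the interior Schauder theory away from $\frakp$ and the model-operator scaling estimate near each $p_j$, exactly as in \cite{Ma}. The finiteness of $K_{-\mu}$ and the matching of cokernels across the two function-space scales is the point that needs the most care; the regularity bootstrap itself is routine. The final assertion — surjectivity when $K_{-\mu}=\{0\}$ — is then immediate, since in that case $h$ must be zero.
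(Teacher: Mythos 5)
Your overall plan --- solve on a weighted $b$-Sobolev space via Proposition \ref{pr:fredsh} and then upgrade to the weighted H\"older space using scaled Schauder estimates together with Proposition \ref{pr:impreg} --- is a legitimate alternative to what the paper actually does, which is to show that the generalized inverse $G$, constructed as a $b$-pseudodifferential operator on a weighted $b$-Sobolev space, is itself bounded from $r^{\mu-2}\calC^{0,\al}_b$ to $r^{\mu}\calC^{2,\al}_b$. However, as written your argument has a genuine gap caused by a mismatch between the Sobolev and H\"older weight conventions. In the paper's normalization $H^m_b=H^m_b(\Sip,dA_g)$ with $dA_g\sim r\,dr\,dy$, a function decaying like $r^\gamma$ lies in $r^{\sigma}H^0_b$ exactly when $\gamma>\sigma-1$, so the H\"older weight $\mu$ corresponds to the Sobolev weight $\mu+1$; this is precisely why the paper's duality statement pairs the Sobolev weight $\mu+1$ with $-\mu+1$ (the two weights sum to $2$), not $\mu$ with $-\mu$. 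Your identification of the cokernel of $A$ on $r^{\mu'}H^{m+2}_b$ with the kernel on $r^{-\mu'}H^{m'+2}_b$ contradicts the very statement you cite; the correct dual Sobolev weight is $2-\mu'$.

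The same shift breaks the final step. With $\mu'$ just below $\mu$, a solution $u\in r^{\mu'}H^{m+2}_b$ only satisfies $|u|\lesssim r^{\mu'-1}$, so Proposition \ref{pr:impreg} applied to $Au=f-h\in r^{\mu-2}\calC^{0,\al}_b$ yields a partial expansion over all indicial roots in the unit-length window $(\mu'-1,\mu)$, and those terms (for instance the constant and $\log r$ terms of $\Delta$ when $\mu\in(0,1)$) genuinely obstruct $u\in r^{\mu}\calC^{2,\al}_b$; your hypothesis that $(\mu',\mu]$ is root-free does not exclude them, and for the same reason your displayed Schauder estimate with the weak norm $\|u\|_{r^{\mu-\varepsilon}H^0_b}$ on the right is false whenever $(\mu-1-\varepsilon,\mu)$ meets $\Gamma(A)$. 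The repair is to take the auxiliary Sobolev weight $\sigma$ just below $\mu+1$: this is allowed since $f\in r^{\mu-2}\calC^{0,\al}_b\subset r^{\sigma-2}H^0_b$ for every $\sigma<\mu+1$ (only with $m=0$, as $f$ has no higher $b$-derivatives --- your ``suitable $m$'' cannot be large); then the Sobolev solution already decays essentially like $r^\mu$, only an $\varepsilon$-window of weights must avoid $\Gamma(A)$, and the dual weight $2-\sigma$ sits just above $1-\mu$, which does identify the cokernel with $K_{-\mu}$ after invoking the symmetry of $\Gamma(A)$. Finally, the point you raise parenthetically and leave hanging does need an argument: for $f-h$ to lie in $r^{\mu-2}\calC^{0,\al}_b$ one must check that elements of $K_{-\mu}$, which decay only like $r^{\gamma}$ for the smallest indicial root $\gamma>-\mu$, actually belong to that space.
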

The statement of this result assumes that the dimension is $2$, but note that this result has analogues in all dimensions.
The proof proceeds by showing that the generalized inverse $G$ of $A$ acting on a particular weighted $b$-Sobolev space 
is bounded on a related $b$-H\"older space, 
see \cite{Ma} for details. 

\begin{proposition}[\cite{Ma} Corollary 4.19]
If $Au = f$ and if both the conic metric $g$ and the function $f$ are polyhomogeneous, then $u$ is polyhomogeneous. 
\label{pr:reg}
\end{proposition}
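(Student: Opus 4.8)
The plan is to localize near a conic point and recover the asymptotic expansion of $u$ by inverting the model operator term by term. By interior elliptic regularity $u\in\calC^\infty(\Sip)$, so everything is a question of the behaviour as $r\to 0$ in a fixed chart $\calU_j$; write $A=A_0+Q$ as in \eqref{typicalop}, so that $r^2A_0=-(r\del_r)^2+\tilde A$ with $\tilde A$ the elliptic tangential operator on the $S^1$ cross-section, while $r^2Q$ is polyhomogeneous and vanishes at $r=0$, hence has index set with real parts bounded below by some $\e_1>0$. The model $A_0$ is invertible ``off the indicial roots'' by the Mellin transform in $r$: given a polyhomogeneous right side $\sum_k c_k(y)r^{\delta_k}(\log r)^{p_k}$ one produces a polyhomogeneous $v$ with $A_0v$ equal to it, whose exponents are those of the right side shifted by $+2$ and with an extra logarithm inserted exactly when $\delta_k+2\in\Gamma(A)$; moreover for $\nu\notin\Gamma(A)$ the map $A_0:r^\nu\calC^{2,\al}_b\to r^{\nu-2}\calC^{0,\al}_b$ has a bounded right inverse, and its nullspace on $r^\nu\calC^{2,\al}_b$ is the finite span of the $r^\gamma\phi_\gamma(y)$ with $\gamma\in\Gamma(A)$, $\gamma>\nu$, which are themselves polyhomogeneous (in two dimensions $\tilde A$ is self-adjoint, hence diagonalizable, so no logarithms arise from this nullspace).

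Before iterating one must upgrade the hypothesis that $u$ lies in some weighted $b$-space to the statement that $u$ is \emph{conormal}: $u$ and all its $b$-derivatives $(r\del_r)^j\del_y^\ell u$ lie in $r^{\mu_0}\calC^{0,\al}_b$ for a fixed weight $\mu_0\notin\Gamma(A)$. This step is where the $b$-pseudodifferential parametrix of Proposition~\ref{pr:fredsh} is genuinely needed --- one commutes $b$-vector fields through $A$ and feeds the result back into the parametrix --- and I expect it to be the main obstacle for a self-contained argument; in \cite{Ma} it is subsumed in the general elliptic $b$-theory.

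Then run the bootstrap. Assume inductively $u=u_{\phg}+\tilde u$ with $u_{\phg}$ a partial polyhomogeneous sum and $\tilde u$ conormal of weight $\mu$. Applying $A_0$ gives $A_0\tilde u=f-A_0u_{\phg}-Qu$; the first two terms are polyhomogeneous, while $Q=r^{-2}(r^2Q)$ sends $u_{\phg}$ to a polyhomogeneous function and sends $\tilde u$ into $r^{\mu-2+\e_1}\calC^{0,\al}_b$. Hence $A_0\tilde u$ is polyhomogeneous up to a conormal error of weight $\mu-2+\e$ with $\e=\min(\e_1,\text{distance to the next indicial root})>0$. Inverting $A_0$ as above, subtract from $\tilde u$ the resulting new polyhomogeneous terms together with a correction reducing the conormal remainder to weight $\mu+\e$; the leftover lies in the nullspace of $A_0$ on $r^{\mu+\e}\calC^{2,\al}_b$, hence is polyhomogeneous as well. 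This improves the expansion by a fixed amount, and since $\Gamma(A)$ is discrete only finitely many indicial roots lie in any half-plane, so the gain cannot stall and iterating yields the full expansion $u\sim\sum_{i,\ell}u_{i,\ell}(y)r^{\gamma_i}(\log r)^\ell$. The coefficients $u_{i,\ell}$ are smooth in $y$ because at each stage they solve ordinary differential equations in $r$ whose data involve only the smooth operator $\tilde A$ and the already-constructed smooth $y$-dependence, and ellipticity of $\tilde A$ on $S^1$ forces its eigensections to be smooth. This is precisely polyhomogeneity of $u$, the index set being computed from $\spec(\tilde A)$ and the index set of $f$ by the shift-and-log rule. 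Alternatively, and more slickly, one may quote from \cite{Ma} that the full generalized inverse of $A$ is a polyhomogeneous conormal operator on the $b$-stretched product, with index sets at the boundary faces dictated by $\Gamma(A)$, and invoke the pushforward theorem for polyhomogeneous conormal distributions to see at once that it carries polyhomogeneous functions to polyhomogeneous functions.
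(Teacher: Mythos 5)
The paper offers no proof of this proposition: it is quoted verbatim from \cite{Ma}, Corollary 4.19, and your sketch is precisely the standard bootstrap that underlies that citation (conormal regularity via the $b$-pseudodifferential parametrix, then term-by-term inversion of the indicial operator with the shift-and-log bookkeeping), so the two ``approaches'' coincide, your closing alternative via the polyhomogeneous structure of the generalized inverse on the $b$-stretched product being in fact the route taken in \cite{Ma}. The argument is essentially sound, with the caveat you yourself flag: the upgrade from weighted H\"older membership to full conormality is the real analytic content and is black-boxed into the $b$-calculus, exactly as the paper black-boxes the whole statement. One side remark is off: whether logarithms appear in the nullspace of $A_0$ is governed not by diagonalizability of $\tilde A$ but by coincidence of the two indicial roots $\pm\sqrt{\lambda}$ attached to an eigenvalue $\lambda$ of $\tilde A$, so they do occur at $\lambda=0$ (the paper itself notes that $0$ is a double indicial root of the scalar Laplacian, with both $1$ and $\log r$ as solutions); since $r^{\gamma}(\log r)^{N}\phi(y)$ is still polyhomogeneous, this does not affect the conclusion.
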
 
Observe that once we {\it know} that $u$ is polyhomogeneous, then substituting an arbitrary expansion $u \sim 
\sum r^{\gamma_j} u_j(y)$ into the equation $Au = f$ and matching terms shows that each $\gamma_j$ is 
an exponent in the expansion of $f$ (shifted by $2$), or an indicial root of $A$, 
or else of the form $\zeta_j + e$ where $\zeta_j$ is an indicial root of $A$ and $e$ is an exponent 
arising in the expansion of one of the coefficients of $A$.

\begin{proposition} If $A u = f$, where $u \in r^{-\mu} \calC^{m+2,\delta}_b$ and $f \in r^{\mu-2} \calC^{m,\delta}_b$, then 
near each conic point, 
\[
u = \sum u_{j\ell}(y) r^{\gamma_j} (\log r)^\ell + \tilde{u}
\]
where the sum ranges over those elements $(\gamma_j,\ell) \in \tilde{\Gamma}(A)$ with $\gamma_j \in (-\mu, \mu)$ 
and where $\tilde{u} \in r^{\mu} \calC^{m+2,\alpha}_b$. 
\label{pr:impreg}
\end{proposition}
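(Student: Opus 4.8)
The plan is to bootstrap from the Fredholm/regularity machinery already in place, specifically Propositions~\ref{pr:fredsh} and~\ref{pr:reg}, together with the $b$-Sobolev mapping theory of \cite{Ma}. First I would observe that the hypotheses $u \in r^{-\mu}\calC^{m+2,\delta}_b$ and $f \in r^{\mu-2}\calC^{m,\delta}_b$ place us in the regime where $A$ acts between weighted spaces whose weights straddle the interval $(-\mu,\mu)$. The idea is to peel off, one indicial root at a time, the leading terms of $u$ that obstruct membership in the better space $r^\mu\calC^{m+2,\alpha}_b$, improving the weight of the remainder at each step. Concretely, list the indicial roots $\gamma_j \in (-\mu,\mu)$ in increasing order of real part (they are real by assumption), say $-\mu < \gamma_1 < \gamma_2 < \cdots < \gamma_L < \mu$, and proceed by induction: having shown $u = (\text{finite sum over }\gamma_1,\dots,\gamma_{i-1}) + u^{(i)}$ with $u^{(i)} \in r^{\gamma_{i-1}+\epsilon}\calC^{m+2,\alpha}_b$ for small $\epsilon>0$, one shows that the next indicial root contributes a term $u_{j\ell}(y)r^{\gamma_i}(\log r)^\ell$ after which the remainder lies in $r^{\gamma_i+\epsilon}\calC^{m+2,\alpha}_b$.

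The mechanism for each inductive step is standard in conic analysis: near a fixed conic point, consider the model operator $A_0$ (the dilation-invariant part of \eqref{typicalop}); its indicial roots are exactly the $\pm\gamma$ with $\gamma^2 \in \mathrm{spec}(\tilde A)$. If $u^{(i)}$ solves $A u^{(i)} = f^{(i)}$ with $f^{(i)}$ of weight strictly better than $\gamma_{i-1}-2$ and $u^{(i)}$ of weight $\gamma_{i-1}+\epsilon$, then because $\gamma_i$ is the first indicial root above $\gamma_{i-1}$, one can—using either the explicit Mellin transform in $r$ or the functional-analytic version in \cite{Ma}—write $u^{(i)} = w_i + u^{(i+1)}$ where $w_i$ is a finite $\CC$-linear combination of terms $r^{\gamma_i}(\log r)^\ell \phi_{i,\ell}(y)$ with $\phi_{i,\ell}$ eigensections of $\tilde A$ for the eigenvalue $\gamma_i^2$, the logarithmic power $\ell$ bounded by the logarithmic multiplicity recorded in $\tilde\Gamma(A)$, and $u^{(i+1)} \in r^{\gamma_i + \epsilon}\calC^{m+2,\alpha}_b$. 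The polyhomogeneity of $u$ from Proposition~\ref{pr:reg} is what guarantees that such an expansion exists in the first place, so strictly speaking this lemma only rearranges the conclusion of Proposition~\ref{pr:reg} into a weighted-space statement; alternatively one invokes \cite[Corollary 4.19]{Ma} directly.

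Once the induction exhausts all $\gamma_j \in (-\mu,\mu)$, the remainder $\tilde u := u^{(L+1)}$ lies in $r^{\gamma_L+\epsilon}\calC^{m+2,\alpha}_b$, and since there are no indicial roots in $[\gamma_L, \mu)$ one can push the weight all the way up: because $\mu \notin \Gamma(A)$ and the mapping $A: r^\mu\calC^{m+2,\alpha}_b \to r^{\mu-2}\calC^{m,\alpha}_b$ is Fredholm by Proposition~\ref{pr:fredsh}, and $A\tilde u = f - A(\sum w_i) \in r^{\mu-2}\calC^{m,\alpha}_b$ (each $w_i$ being annihilated by $A_0$ up to an error of better weight, which is absorbed), a final application of Proposition~\ref{pr:duality}-type reasoning (or the regularity part of \cite{Ma}) gives $\tilde u \in r^\mu\calC^{m+2,\alpha}_b$. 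Relabeling $(\gamma_i,\ell)$ as the pairs in $\tilde\Gamma(A)$ with $\gamma_i \in (-\mu,\mu)$ yields the stated form. The main obstacle is bookkeeping rather than conceptual: one must verify that the error produced when applying $A$ (as opposed to $A_0$) to each model term $r^{\gamma_i}(\log r)^\ell\phi(y)$ genuinely has weight strictly greater than $\gamma_i - 2$—this uses the polyhomogeneity of the coefficients of $A$, so the error exponents are of the form $\gamma_i - 2 + e$ with $e>0$ an exponent in a coefficient expansion—and that finitely many steps suffice, which holds because $(-\mu,\mu)$ is bounded and indicial roots have no accumulation point. I expect the cleanest writeup simply cites Proposition~\ref{pr:reg} for polyhomogeneity and then the weighted mapping theory of \cite{Ma} for the weight of the tail, reducing the proof to a short paragraph.
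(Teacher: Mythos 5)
The paper does not actually prove this proposition: like Propositions~\ref{pr:fredsh}, \ref{pr:duality} and \ref{pr:reg} it is quoted from the general theory of \cite{Ma}, and the text that follows it only records the intuition. Your sketch is, in outline, the standard argument behind the citation: split $A = A_0 + Q$ near the cone point, observe that $A_0 u = f - Qu$ has weight strictly better than $-\mu-2$, invert the dilation-invariant model operator (Mellin transform in $r$), and account for the difference between the inverses at the two weights by residues at the indicial roots crossed, iterating until the remainder reaches weight $\mu$. That mechanism is correct and needs no hypotheses beyond the ones stated.

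The genuine misstep is the sentence asserting that ``the polyhomogeneity of $u$ from Proposition~\ref{pr:reg} is what guarantees that such an expansion exists in the first place,'' so that the lemma ``only rearranges'' Proposition~\ref{pr:reg}. Proposition~\ref{pr:reg} requires $f$ to be polyhomogeneous; here $f$ lies only in $r^{\mu-2}\calC^{m,\delta}_b$, so $u$ is in general \emph{not} polyhomogeneous and has no full expansion to appeal to --- which is exactly why the conclusion is a finite partial expansion with a weighted-H\"older remainder. You must therefore run the Mellin/model-operator argument (or cite the corresponding result of \cite{Ma}) as the primary proof, not as an ``alternative.'' Three smaller points of bookkeeping. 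First, the statement tacitly assumes $\pm\mu \notin \Gamma(A)$ (otherwise a term $r^{\mu}(\log r)^{\ell}$ falls between the sum and the remainder); you use this implicitly at the last step. Second, your claim that $f - A(\sum_i w_i) \in r^{\mu-2}\calC^{m,\alpha}_b$ is not automatic: $Qw_i = \calO(r^{\gamma_i-2+e})$ may still be worse than $r^{\mu-2}$ when $e$ is small, and solving these errors away generates \emph{derived} exponents $\gamma_i + e, \gamma_i+2e,\dots$ below $\mu$ which are not indicial roots; the paper's statement quietly ignores these (they are harmless in the applications, where the coefficient expansions are in integer powers), but your induction should either absorb them into the iteration or note the caveat. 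Third, the final improvement of the tail from $r^{\gamma_L+\epsilon}$ to $r^{\mu}$ is again the no-indicial-roots-in-the-gap argument, not an appeal to Proposition~\ref{pr:duality}, which is a solvability statement rather than a regularity one.
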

The intuition is that since $f$ lies in a slightly better space than expected, then $u$ decays as fast as one
might predict from the decay of $f$ except for some finite dimensional component coming from the indicial roots of $A$. 
Note that the finite sum in this expression is not necessarily in the global nullspace of $A$, but is 
formally annihilated by $A$.

\section{Applications of conic elliptic theory}
We now apply the general theory reviewed in \S 3 to the present setting.  We first compute the indicial data for
the operators $\Delta^g$, $P^g$ and $L^g$ and then discuss some implications of the information we get this way.

We shall assume for most of this section that $g$ is a conic metric which is polyhomogeneous at each 
$p_j$. The issue of metrics with finite regularity is discussed at the end of the section. 

\subsection{Indicial roots}\label{ss:indicial_roots}
If $g \in \CM(\Sip, \vec\be)$ and $p_j$ is any one of the cone points, then by definition $g$ is asymptotic to the 
model metric $g_{\be_j}$ at that point. Dropping the subscript $j$, it is therefore the case that the indicial operator
for $\Delta^g$, $P^g$ and $L^g$ at that point agrees with corresponding operator for $g_\be$, and since this
metric is flat, these indicial operators are equal 
(up to a constant factor) to the rough Laplacians $\nabla^* \nabla$ acting on scalar functions, sections of $T^*\Sip$ 
and sections of $S^2T^*\Sip$, respectively.   If $E$ is any Hermitian vector bundle with metric compatible connection 
over $(C(S^1), g_\beta)$, then  
\[
\nabla^*\nabla = r^{-2} \left( -(\nabla_{r \partial_r})^2 - (1+\beta)^{-2}(\nabla_{\partial_{y}})^2\right) \,.
\]
The calculations and results below will be carried out using the eigenfunction basis $\psi_k(y) = e^{i k y}$, $k \in \ZZ$, for
$-\del_y^2$, with corresponding eigenvalues $k^2$. 

\subsubsection*{Indicial roots of $\Delta$}
Restricting the scalar Laplacian,
\[
\Delta = r^{-2}\left(-(r\del_r)^2 - (1+\beta)^{-2}\del_y^2\right)
\]
to the $k^{\mathrm{th}}$ eigenspace, we see that $r^\zeta \psi_{\pm k}(y) \in \ker(\Delta)$ precisely when 
$\zeta =  \zeta_k^0 = k/(1+\be)$. (We allow $k$ to be both positive and negative.) Hence the 
set of indicial roots for $\Delta$ equals
\[
\Gamma_0(\be) := \left\{\frac{k}{1+\beta}: k \in \ZZ \right\}.
\]
Note that $0$ is a double root, so both $r^0 = 1$ and $\log r$ are solutions. 

\subsubsection*{Indicial roots of $P$}
The Levi-Civita connection for the metric $g_\be$ on $T^*C(S^1)$ is determined by 
\[
\nabla_{\partial_r}dr = 0, \  \nabla_{\partial_{y}} dr = r (1+\beta)^2 dy, \ 
\nabla_{\partial_r}dy = -\frac{1}{r}dy, \ \nabla_{\partial_{y}}dy =  -\frac{1}{r}dr \,.
\]
The action of $P$ on the $1$-form 
\begin{equation}
\eta = \eta_1 dr + \eta_2 (1+\beta) rdy
\label{decomp1form}
\end{equation}
is thus
\[
P \begin{bmatrix} \eta_1 \\ \eta_2 \end{bmatrix}
= r^{-2} \left(-(r \del_r)^2+1 + \begin{bmatrix}
-\del_y^2/(1+\beta)^2 & 2\del_y/(1+\beta) \\
-2\del_y/(1+\beta) & -\del_y^2(1+\beta)
\end{bmatrix} \right) \begin{bmatrix} \eta_1 \\ \eta_2 \end{bmatrix}\,.
\]
The indicial roots are calculated as for the scalar case. The equation
\[
P \left(r^\zeta \psi_k(y) \begin{bmatrix} b_1 \\ b_2 \end{bmatrix}\right) = r^{\zeta-2}\psi_k
\left ( - \zeta^2 + 1 + \begin{bmatrix}  k_\beta^2 & 2ik_\beta \\ 
-2ik_\beta & k^2_\beta \end{bmatrix}  \right) \begin{bmatrix} b_1 \\ b_2 \end{bmatrix} = \begin{bmatrix} 0 \\ 0 \end{bmatrix},
\]
where $k_\beta:=k/(1+\beta)$, has nontrivial solutions if and only if 
\[
\zeta = \zeta_{\pm,k}^1 \in \Gamma_1(\be) := \left\{ \pm 1 + \frac{k}{1+\be},\  k \in \ZZ \right\}; 
\]
the solutions corresponding to $\zeta_{\pm, k}^1$ are given by
\[
\begin{bmatrix} \eta_1 \\ \eta_2 \end{bmatrix} = 
r^{\pm 1 + \frac{k}{1+\be}} \left( \alpha_1 \psi_{k}(y) \begin{bmatrix} 1 \\ \mp i \end{bmatrix}+
\alpha_2 \psi_{-k}(y)\begin{bmatrix} 1 \\ \pm i \end{bmatrix} \right),
\]
or as real vectors
\[
\begin{bmatrix} \eta_1 \\ \eta_2 \end{bmatrix} = 
r^{\pm 1 + \frac{k}{1+\be}} \left( \alpha_1' \begin{bmatrix} \cos (k y)  \\ 
\pm \sin ( k y) \end{bmatrix}+
\alpha_2' \begin{bmatrix} -\sin (k y ) \\ \ \, \pm\cos (k y ) 
\end{bmatrix}\right).
\]

\subsubsection*{Indicial roots of $L$}
Identify the symmetric two-tensor $h$ with the triplet $(\phi_1, \phi_2, \phi_3)$ by
\begin{equation}
h = \phi_1 g_\beta + \phi_2 (dr^2 - (1+\be)^2 r^2 dy^2) + \phi_3 (1+\beta)r(dr \otimes dy + dy \otimes dr). 
\label{decomph}
\end{equation}

The first component, which is the pure trace part, immediately decouples: 
\[
\nabla^*\nabla (\phi_1 g_\be) = (\Delta \phi_1) g_\beta,
\]
so the set of indicial roots for this part is once again $\Gamma_0(\be)$.

A further calculation produces the matrix expression
\[
\nabla^* \nabla \begin{bmatrix} \phi_2 \\ \phi_3 \end{bmatrix}
= r^{-2}\left( -(r\del_r)^2+4+
\begin{bmatrix}
  -\del_y^2/(1+\beta)^2 & 4 \del_y/(1+\beta) \\
 -4\del_y/(1+\beta) &  -\del_y^2/(1+\beta)^2 
\end{bmatrix} \right)\begin{bmatrix} \phi_2 \\ \phi_3 \end{bmatrix}, 
\]
so proceeding as before,
\[
\nabla^* \nabla \left(r^\zeta \psi_k(y) \begin{bmatrix} b_2 \\ b_3 \end{bmatrix}\right) = \left( -\zeta^2 +4 +
\begin{bmatrix}
 k_\beta^2  & 4ik_\beta \\
-4ik_\beta & + k_\beta^2
\end{bmatrix} \right)
\begin{bmatrix} b_2 \\ b_3 \end{bmatrix} =
\begin{bmatrix} 0 \\ 0 \end{bmatrix},
\]
where $k_\beta:=k/(1+\beta)$, has nontrivial solutions if and only if 
\[
\zeta = \zeta_{\pm,k}^2 \in \Gamma_2(\be) := \left\{ \pm 2 +\frac{k}{1+\be} : k \in \ZZ \right\};
\]
the solutions corresponding to $\zeta_{\pm,k}^2$ are
\[
\begin{bmatrix} \phi_2 \\ \phi_3 \end{bmatrix} = 
r^{\pm 2 + \frac{k}{1+\beta}} \left( \alpha_2' \begin{bmatrix} \cos (k y) \\
\pm \sin ( k y) \end{bmatrix}+
\alpha_3' \begin{bmatrix} -\sin ( k y)  \\ \ \, \pm \cos ( k y) \end{bmatrix}
\right).
\]
\subsubsection*{The relationship between the indicial roots of $L$ and $P$}
The adjoint of the intertwining formula \eqref{eq:intertwine} for $g\in \CM_\cc$ is 
\begin{equation}
L^g \mathcal{D}^g = \mathcal{D}^gP^g.
\label{intertwine3}
\end{equation}
This formula and \eqref{eq:intertwine} imply some useful relationships between the indicial data of $P^g$ and $L^g$. 
Let us say that $(\zeta, \Phi(y))$ is an indicial pair for a second order conic elliptic operator $A$ if $A_0(r^\zeta \Phi(y)) = 0$,
where $A_0$ is the indicial operator for $A$. Thus $\zeta$ is an indicial root for $A$ and $\Phi$ is the corresponding 
coefficient section. 

Now suppose that $D$ is a first order conic operator, with indicial operator
\[
D_0 = r^{-1} \left( r \del_r \, T_D + \tilde D \right),
\]
where $T_D$ is an endomorphism (typically induced by Clifford multiplication). Clearly 
\[
D_0 (r^\zeta \Phi(y)) =  r^{\zeta-1}(\zeta \, T_D \Phi+\tilde{D}\Phi)(y). 
\]

\begin{lemma}
The operators $B$ and $\calD$ intertwine the indicial pairs for $L$ and $P$. Namely, if $(\zeta,\Phi(y))$ is an indicial pair 
for $L$, then $(\zeta-1, (\zeta T_B \Phi+\tilde B\Phi)(y))$ is an indicial pair for $P$. Similarly, if $(\zeta,\Phi(y))$ is an indicial pair 
for $P$, then $(\zeta-1, (\zeta T_\calD \Phi+\tilde{\calD}\Phi)(y))$ is an indicial pair for $L$.  
\end{lemma}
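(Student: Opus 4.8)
The plan is to exploit the two intertwining identities $B^g L^g = P^g B^g$ and $L^g \calD^g = \calD^g P^g$ at the indicial-operator level. The key observation is that passing to indicial operators is a homomorphism: if $A$ and $A'$ are second order conic operators and $D$ is a first order conic operator with $A D = D A'$, then the corresponding indicial operators satisfy $A_0 D_0 = D_0 A_0'$, because the indicial operator is exactly the leading part of the operator under homothetic rescaling $(r,y)\mapsto(\lambda r,y)$, and this rescaling is multiplicative over compositions. Since the metric $g$ is asymptotic to the flat model $g_\be$ at each cone point, the indicial operators of $L^g$, $P^g$, $B^g$ and $\calD^g$ coincide with those of the model, so it suffices to verify everything for $g_\be$, where $B_0$ and $\calD_0$ have the stated form $D_0 = r^{-1}(r\del_r\, T_D + \tilde D)$.

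First I would record the formula, already displayed just above the statement, $D_0(r^\zeta \Phi(y)) = r^{\zeta - 1}(\zeta\, T_D\Phi + \tilde D\Phi)(y)$, applied to $D = B$ and $D = \calD$; this is the map sending the coefficient section $\Phi$ at exponent $\zeta$ to a coefficient section at exponent $\zeta - 1$. Then, given an indicial pair $(\zeta,\Phi(y))$ for $L$, so that $L_0(r^\zeta\Phi(y)) = 0$, I apply $B_0$ to both sides and use $B_0 L_0 = P_0 B_0$ to get $P_0\bigl(B_0(r^\zeta\Phi(y))\bigr) = 0$, i.e. $P_0\bigl(r^{\zeta-1}(\zeta T_B\Phi + \tilde B\Phi)(y)\bigr) = 0$. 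Provided the coefficient section $\Psi := (\zeta T_B\Phi + \tilde B\Phi)(y)$ is nonzero, this exhibits $(\zeta - 1,\Psi)$ as an indicial pair for $P$. The symmetric argument with $L_0\calD_0 = \calD_0 P_0$ handles the direction from $P$ to $L$: starting from $P_0(r^\zeta\Phi(y)) = 0$, apply $\calD_0$ to get $L_0\bigl(r^{\zeta-1}(\zeta T_\calD\Phi + \tilde\calD\Phi)(y)\bigr) = 0$.

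The main obstacle is the nonvanishing of the image coefficient section — the statement as phrased implicitly asserts that $B$ and $\calD$ genuinely \emph{map} indicial pairs to indicial pairs, not that they map them to zero. One must check that $\zeta T_B\Phi + \tilde B\Phi \not\equiv 0$ (and likewise for $\calD$) for each indicial pair. This can be done by inspection against the explicit lists $\Gamma_0(\be),\Gamma_1(\be),\Gamma_2(\be)$ computed in \S5.1: for $L$ acting on $S^2$, an indicial pair at $\zeta$ has pure-trace exponents in $\Gamma_0(\be)$ and trace-free exponents in $\Gamma_2(\be)$, and $B$ sends a pair at such a $\zeta$ to a pair at $\zeta - 1 \in \Gamma_1(\be)$, matching the indicial set of $P$ on $1$-forms; the arithmetic $\{\pm 2 + k/(1+\be)\} - 1 \subset \{\pm 1 + k'/(1+\be)\}$ (and $\Gamma_0(\be) - 1 = \{-1 + k/(1+\be)\}\subset\Gamma_1(\be)$) is immediate, and the corresponding eigensection computation — using the explicit expressions for $T_B$, $\tilde B$, $T_\calD$, $\tilde\calD$ read off from the Levi-Civita connection of $g_\be$ given in \S5.1 — shows the relevant combination is nonzero except in degenerate low-frequency situations, which if they occur should be noted explicitly. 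I would carry this verification out on each frequency component $\psi_k(y)$ separately, since both $L_0$ and $P_0$ are diagonal in the Fourier decomposition in $y$, reducing the whole argument to a $2\times 2$ or $1\times 1$ linear algebra check for each $k \in \ZZ$ and each sign of the shift.
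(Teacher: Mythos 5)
Your argument is essentially identical to the paper's: apply $B_0$ (resp.\ $\calD_0$) to $L_0(r^\zeta\Phi)=0$ (resp.\ $P_0(r^\zeta\Phi)=0$), use the intertwining identities $B L = P B$ and $L\calD = \calD P$, and invoke the displayed formula for a first order conic operator acting on $r^\zeta\Phi(y)$. The nonvanishing issue you raise is real but is not addressed in the paper's proof (indeed the image \emph{can} vanish, e.g.\ $B$ annihilates the pure-trace indicial solutions of $L$); the lemma is only ever used through the induced map $X_\be\to Y_\be$, whose injectivity is established separately by the explicit computation in Lemma~\ref{lem:iso}, so the paper tolerates the degenerate cases.
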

\begin{proof}
If $L(r^\zeta \Phi) = 0$, then $0=B L (r^\zeta \Phi)  = P B (r^\zeta \Phi) = P( r^{\zeta-1} (\zeta T_B \Phi+\tilde B\Phi))$. 
Similarly, if conversely $P(r^\zeta \Phi) = 0$, then $0= \calD \calP (r^\zeta \Phi)  = L\calD(r^{\zeta}\Phi) = 
L(r^{\zeta-1} (\zeta T_{\calD} \Phi+\tilde \calD\Phi))$. 
\end{proof}

One case is of particular interest.  Define
\[
X_\beta = \{\Phi:  P (r^{-1 + \frac{1}{1+\be}} \Phi) = 0\}, \qquad Y_\beta = \{ \Psi: L ( r^{-2 + \frac{1}{1+\be}}\Psi) = 0 \}.
\]
By the preceding lemma, $\calD$ maps $X_\beta$ into $Y_\beta$. 
\begin{lemma}\label{lem:iso}
The map $X_\beta \longrightarrow Y_\beta$ induced by $\calD$ as above is an isomorphism.
\end{lemma}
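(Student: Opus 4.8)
The plan is to show that the map $\calD\colon X_\beta \to Y_\beta$ is both injective and surjective by explicit comparison of the two indicial kernels, using the calculations of $\Gamma_1(\be)$ and $\Gamma_2(\be)$ together with the intertwining identities. First I would identify the exponent in question: $\zeta = -1 + \frac{1}{1+\be}$ corresponds to $k = 1$ in the family $\zeta^1_{-,k} = -1 + \frac{k}{1+\be}$, so $X_\beta$ is the space of indicial coefficient sections $\Phi(y)$ for $P$ at this root. From the explicit formula for the solutions associated to $\zeta^1_{\pm,k}$, the indicial kernel of $P$ at $\zeta^1_{-,1}$ is spanned by the two real vectors obtained by setting $k = 1$ (i.e.\ $\alpha_1', \alpha_2'$ arbitrary), so $\dim X_\beta = 2$ — \emph{unless} there is a coincidence $\zeta^1_{-,1} = \zeta^1_{+,k'}$ for some other $k'$, which would enlarge the space; I would note $-1 + \frac{1}{1+\be} = 1 + \frac{k'}{1+\be}$ forces $k' = -(2+2\be) = -2\be - 2$, which for $\be \in (-1,0)$ is not an integer except on a discrete set, and even there one must check log terms — so generically $\dim X_\beta = 2$. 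Similarly $\zeta = -2 + \frac{1}{1+\be}$ is the $k=1$ member of $\zeta^2_{-,k}$, and $\dim Y_\beta = 2$ by the same reasoning, modulo analogous exceptional resonances.

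Next I would verify injectivity. Suppose $\Phi \in X_\beta$ and $\calD(r^\zeta\Phi) = r^{\zeta - 1}(\zeta T_\calD \Phi + \tilde\calD\Phi) = 0$ as an indicial coefficient; I want $\Phi = 0$. The cleanest route is the other intertwining identity: apply $B$ (at the indicial level) to recover $\Phi$ from $\calD\Phi$, using that on $1$-forms the composition $B\calD$ equals $P$ (the Weitzenböck operator), restricted to trace-free tensors. More precisely, $\calD$ is the adjoint of $B|_{S^2_0}$, so $B\calD = P$ on $1$-forms; at the indicial level this says $B_0 \circ \calD_0 = P_0$, hence if $\calD_0(r^\zeta \Phi) = 0$ then $P_0(r^\zeta\Phi) = 0$, i.e.\ $\zeta \in \Gamma_1(\be)$ \emph{and} $\zeta^2 + (\text{shift}) \in \spec$; but $\zeta = -1 + \frac{1}{1+\be}$ lies in $\Gamma_1(\be)$ already as the $k=1$ root, and the point is that $\Phi$ being simultaneously in the kernel of $\calD_0$ and in $X_\beta$ forces it into the intersection of two eigenspaces, which I would check is trivial by a short computation with the $2\times 2$ matrices displayed for $P$ and for $\nabla^*\nabla$ on $S^2_0$. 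Alternatively — and this may be cleaner — injectivity follows if one knows the conformal Killing operator $\calD$ has trivial kernel on the relevant space of homogeneous $1$-forms of degree $\zeta$, which is the statement that there are no homogeneous conformal Killing fields of that weight on the flat cone; since conformal Killing fields on $(\CC, g_\be)$ of a given homogeneity are very restricted, this can be read off directly.

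Then I would get surjectivity by a dimension count: having shown $\dim X_\beta = \dim Y_\beta = 2$ and that $\calD$ is injective, surjectivity is automatic. Alternatively one can produce the inverse explicitly: by the adjoint intertwining $L\calD = \calD P$ one has, at the indicial level, that $\calD_0$ maps the $P$-indicial kernel at $\zeta$ into the $L$-indicial kernel at $\zeta - 1$; running the \emph{other} lemma backwards, the operator induced by $B$ maps $Y_\beta$ back toward $X_{\beta}$ at the appropriate shift, and one checks $B\calD$ acts as an invertible scalar (coming from $P$ being invertible on that eigenspace, i.e.\ $\zeta^2 + 1 - k_\be^2 \neq 0$ — which is exactly the condition that $\zeta = -1 + \frac{1}{1+\be}$ is a \emph{simple} indicial root, no log terms). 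I expect the main obstacle to be precisely the bookkeeping of potential resonances: ensuring that for $\be \in (-1,0)$ the roots $\zeta^1_{-,1}$ and $\zeta^2_{-,1}$ are simple (multiplicity with no logarithm), so that $\dim X_\beta$ and $\dim Y_\beta$ are exactly $2$ and the intertwining maps act as genuine isomorphisms rather than Jordan blocks. This amounts to checking a handful of inequalities of the form $\pm 1 + \frac{k}{1+\be} \neq -1 + \frac{1}{1+\be}$ for $k \neq 1$, i.e.\ that $(2+2\be)$ and similar quantities avoid $\ZZ$; I would either restrict to generic $\be$ or handle the exceptional angles by noting the log terms still intertwine compatibly and the isomorphism persists.
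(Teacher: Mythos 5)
Your overall strategy --- compute $\dim X_\be = \dim Y_\be = 2$, prove injectivity separately, and get surjectivity for free by dimension count --- is a workable alternative to the paper's argument, which is instead a single explicit computation: it inserts the two-parameter family of $k=1$ indicial solutions spanning $X_\be$ into the displayed matrix for $\delta^*$ on the model cone, observes that the trace component of the output vanishes (so that $\delta^*\eta = \calD\eta$ there), and that the remaining two components form a nonzero multiple of the two-parameter family spanning $Y_\be$. That one computation delivers injectivity and surjectivity simultaneously, uniformly in $\be \in (-1,0)$. Your route ultimately requires a comparable computation anyway, because your cleanest injectivity argument is circular: applying $B\calD = P$ to $r^\zeta\Phi$ with $\Phi \in X_\be$ just returns the hypothesis $P_0(r^\zeta\Phi) = 0$ and gives no information. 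The fallback you name (no homogeneous conformal Killing $1$-forms of weight $-1+\frac{1}{1+\be}$ and angular mode $e^{\pm iy}$ on the flat cone) is correct and can be verified against the monomial fields $z^m\del_z$, but it is the crux of the lemma and cannot be left as ``read off directly.''

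The more serious gap is your treatment of the resonant angles, for which your proposed fix is wrong. At $\be = -\tfrac12$ one has $-1+\frac{1}{1+\be} = 1 = \zeta^1_{+,0}$, so $X_\be$ as literally defined also contains the $k=0$ solutions $r\,dr$ and $r^2\,dy$, dual to the dilation and rotation fields; $\calD$ annihilates both, so on that enlarged space the map is \emph{not} injective. There are no logarithms here to ``intertwine compatibly'': the degeneracy is a coincidence of indicial roots coming from \emph{different} Fourier modes in $y$, not a Jordan block at a fixed mode. Likewise $Y_\be$, taken literally, jumps in dimension at $\be \in \{-\tfrac14,-\tfrac12,-\tfrac34\}$ (where $-2+\frac{1}{1+\be}$ collides with a root $\zeta^2_{+,k'}$ or with $\Gamma_0(\be)$), which destroys the dimension count for surjectivity --- and $\be=-\tfrac12,-\tfrac14$ lie in the range where the lemma is actually invoked in the regularization of $\Stt^\sing$. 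The correct repair, implicit in the paper's proof, is that $P_0$, $L_0$ and $\calD_0$ all commute with the Fourier decomposition in $y$, so $X_\be$ and $Y_\be$ should be taken (as the paper's computation effectively does) to consist of the $k=\pm1$ components only; these are exactly two-dimensional for every $\be\in(-1,0)$, and the explicit computation of $\delta^*$ then applies with no exceptional cases.
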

\begin{proof}
Dropping the subscript $0$, we first compute $\delta$ and $\delta^*$ on the model space $(C(S^1), g_\be)$. 

In terms of the bases used in the decompositions \eqref{decomp1form} and  \eqref{decomph}, we have
\[
\delta h  = \frac{1}{r} \begin{bmatrix} -r\del_r &  -r\del_r - 2  & -\del_y/(1+\be)  \\
-\del_y/(1+\be) & \del_y/(1+\be) & -r\del_r - 2 \end{bmatrix}  \begin{bmatrix} \phi_1 \\ \phi_2 \\ \phi_3 \end{bmatrix},
\]
and similarly
\[
\delta^* \eta = \frac{1}{r} \begin{bmatrix} r\del_r + 1 & \del_y/(1+\be) \\ r\del_r - 1 & -\del_y/(1+\be) \\
\del_y/(1+\be) & r\del_r - 1 \end{bmatrix}  \begin{bmatrix}  \eta_1 \\ \eta_2 \end{bmatrix}.
\]
Hence inserting 
\[
\begin{bmatrix} \eta_1 \\ \eta_2 \end{bmatrix} = r^{-1  + \frac{1}{1+\beta}}\left( a_1 \begin{bmatrix} \cos y \\ - \sin y  \end{bmatrix} 
+ a_2 \begin{bmatrix} -  \sin y \\ - \cos y \end{bmatrix}\right), 
\]
we get
\[
\delta^* \begin{bmatrix} \eta_1 \\ \eta_2 \end{bmatrix} = -2 r^{-2+\tfrac{1}{1+\beta}}
\left( a_1 \begin{bmatrix} 0 \\    \cos y  \\  -\sin y \end{bmatrix}  
+ a_2 \begin{bmatrix}0 \\   -\sin y  \\  -\cos y  \end{bmatrix} \right). 
\]
The top row, which corresponds to the coefficient of $g_\be$, vanishes, which means that $\delta^*\eta$ is traceless,
i.e., $\delta^*\eta = \calD \eta$, for any $\eta \in X_\be$.   The projection of this to the second and third rows is 
clearly invertible, as claimed. 
\end{proof}

\subsection{Geometric realization}
It is important that the indicial roots of these operators close to $0$ have geometric interpretations. 
This discussion is local, and it suffices to work with the flat model metric $g_\be$ in the unit disk. 
Set $|z| = \rho$ and $r = \rho^{1+\beta}/(1 + \beta)$.  
\begin{itemize}
\item[i)] The value $0$ is a double indicial root for the scalar operator $\Delta$,  and the solutions corresponding to it are
$r^0 = 1$ and $\log r$. Since $L ( f g_\be) = \frac12 (\Delta f )g_\be$, the functions $1$ and $\log r$ arise 
from infinitesimal `pure trace' deformations. Indeed, setting $g(a,\eta) = a r^{2\eta} g_\be$, then 
\[
\del_a g|_{a=1, \eta = 0} = g_\be, \quad \del_\eta g|_{a=1, \eta = 0} = 2 \log r \, g_\be,
\]
i.e., these two solutions correspond to scaling the metric and changing the cone angle, respectively.
\item[ii)] 
The operator $P$ has a two-dimensional family of solutions at the indicial root $1$, spanned by $r dr$ and $r^2 dy$, cf.~the 
calculations in \S \ref{ss:indicial_roots}. The vector fields dual to these $1$-forms are 
$$
X_1 = r \partial_r \quad\text{and}\quad X_2 = \partial_y \, ,
$$
which generate conformal dilations and rotations.  Note that $\rho \partial_{\rho} = (\beta+1) r \partial_r = (\beta+1) X_1$,
so these are in fact a local basis for the space of vector fields smooth across the origin which vanish at the puncture. 
\item[iii)] A symmetric $2$-tensor $h$ which is smooth across $z=0$ has norm
\[
|h| \leq \rho^{-2\be} = c r^{-\frac{2\be}{1+\be}} = c r^{-2 + \frac{2}{1+\be}}.
\]
The space of trace-free solutions corresponding to this indicial root is spanned by $r^{-2+\frac{2}{1+\be}}(dr^2 - (1+\be)^2 r^2 dy^2)$ 
and $r^{-2 + \frac{2}{1+\be}} rdrdy$. 
\item[iv)] Finally, if $\tr h = 0$ and the coefficients of $h$ blow up no faster than $|z|^{-1}$, then
\[
|h| \approx \rho^{-2\be-1} = c r^{-\frac{2\be+1}{1+\be}} = c r^{-2 + \frac{1}{1+\be}}.
\]
This matches the growth rate for $h \in \Stt^\sing \setminus \Stt$. 
\end{itemize}

\subsection{Regularization of $\Stt^\sing$} 
Since singular TT tensors satisfy $|\kappa| \sim r^{-2 + \frac{1}{1+\be}}$, a {\it regularization} map is needed to incorporate
them into the analysis below. Thus we define 
\begin{equation}
R: \Stt^\sing \longrightarrow r^\nu \calC^{m,\delta}_b(\Sip; S^2_0),
\end{equation}
for some $0 < \nu \ll 1$. When $-1 < \be < -\frac 12$, we have $-2 + 1/(1+\be) > 0$, and hence
we can choose any $\nu \in (0, -2 + 1/(1+\be))$ and define $R(\kappa) = \kappa$.  

On the other hand, suppose $-1/2 \leq \be < 0$; Lemma \ref{lem:iso} asserts that for each $j$ 
there is a unique $\omega_0^{(j)} r^{-1+\frac{1}{1+\be}}\in X_\be$ so that near $p_j$, 
\[
\kappa - \calD^g (\chi_j \omega_0^{(j)} r^{-1+\frac{1}{1+\be}}) \in r^\nu \calC^{\ell,\delta}_b.
\]
This leads us to define
\begin{equation}
R(\kappa) = \kappa - \calD^g (\sum_j \chi_j \omega_{0}^{(j)} r^{-1+\frac{1}{1+\be}}).
\label{regmap}
\end{equation}
(the sum is only over those cone points where $\be_j \geq -1/2$). 
\begin{proposition}
The map $R$ is injective. 
\end{proposition}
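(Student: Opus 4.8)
The plan is to prove $\ker R = \{0\}$ by a pairing/integration‑by‑parts argument. If every $\be_j < -\tfrac12$ then $R$ is the identity and there is nothing to show, so I would assume some $\be_j \ge -\tfrac12$ and take $\kappa \in \Stt^\sing$ with $R(\kappa) = 0$. By the very definition \eqref{regmap} this means
\[
\kappa = \calD^g\omega, \qquad \omega := \sum_{j:\, \be_j \ge -1/2} \chi_j\,\omega_0^{(j)}\, r^{-1+\frac{1}{1+\be_j}},
\]
with the $\omega_0^{(j)}$ the coefficient sections from Lemma~\ref{lem:iso}. Note that $\omega$ is smooth on $\Sip$, supported near $\frakp$, and equals $\omega_0^{(j)} r^{-1+\frac{1}{1+\be_j}}$ near each relevant $p_j$; in particular $|\omega|_g = \calO(r^{-1+\frac{1}{1+\be_j}})$ there, while $\omega \equiv 0$ near the cone points with $\be_j < -\tfrac12$.

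The key identity is $\|\kappa\|^2_{L^2(g)} = \langle \calD^g\omega,\kappa\rangle$. Since $\kappa$ is trace‑free, the trace part of $(\delta^g)^*\omega$ drops out of this pairing, so $\langle\calD^g\omega,\kappa\rangle = \langle(\delta^g)^*\omega,\kappa\rangle$, and $\delta^g$ is the formal adjoint of $(\delta^g)^*$. I would therefore integrate by parts over the truncated surface $\Sip \setminus \bigcup_j B_\e(p_j)$, obtaining
\[
\int_{\Sip\setminus\cup_j B_\e(p_j)} |\kappa|^2_g\,dA_g = \int_{\Sip\setminus\cup_j B_\e(p_j)} \langle \omega, \delta^g\kappa\rangle\,dA_g + \sum_j \int_{\del B_\e(p_j)} (\cdots).
\]
The interior term on the right vanishes identically because $\delta^g\kappa = 0$ by definition of $\Stt^\sing$, so everything comes down to (i) $\|\kappa\|^2_{L^2(g)} < \infty$, so the left‑hand side converges as $\e \to 0$, and (ii) the boundary terms over the circles $\{r=\e\}$ tending to $0$.

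Both are pure decay bookkeeping near a cone point. An element of $\Stt^\sing$ has $|\kappa|_g = \calO(r^{-2+\frac{1}{1+\be_j}})$ (this is item (iv) of the geometric realization discussion above), and since $\be_j < 0$ the integral $\int r^{-4+\frac{2}{1+\be_j}}\,r\,dr$ converges, giving (i). For (ii), the boundary integrand is pointwise $\lesssim |\omega|_g\,|\kappa|_g$, the $g$‑length of $\{r=\e\}$ is $\sim\e$, and near a cone point with $\be_j \ge -\tfrac12$ this yields $\lesssim \e\cdot\e^{-1+\frac{1}{1+\be_j}}\cdot\e^{-2+\frac{1}{1+\be_j}} = \e^{-2+\frac{2}{1+\be_j}} \to 0$, the exponent again being positive because $\be_j < 0$; near a cone point with $\be_j < -\tfrac12$ the term is zero since $\omega$ vanishes there. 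Letting $\e\to 0$ forces $\|\kappa\|^2_{L^2(g)} = 0$, i.e.\ $\kappa = 0$, so $R$ is injective.

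The argument is largely routine; the one point that genuinely needs care is that $\omega$ is singular (its rate $r^{-1+\frac{1}{1+\be_j}}$ need not decay), so the pairing must be read as a limit over the truncated surfaces and the boundary contributions actually checked. This is exactly where the standing hypothesis $\be_j \in (-1,0)$ (all cone angles below $2\pi$) enters, and the same inequality is what puts $\calD^g\omega$ in $L^2$.
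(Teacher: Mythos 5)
Your proof is correct, but it takes a genuinely different route from the paper's. The paper applies $B^{g}$ to the identity $\kappa = \calD^{g}\omega$ to obtain $P^{g}\omega = 0$, and then invokes Corollary~\ref{cor:invP} (injectivity of the Friedrichs extension of $P^{g}$, which in the positive-curvature case rests on the first-eigenvalue estimate of Proposition~\ref{eigenbound} and excludes the spherical-suspension case), concluding $\omega = 0$ and hence $\kappa=0$. You instead pair $\kappa=\calD^{g}\omega$ against $\kappa$ itself, drop the trace part because $\kappa$ is trace-free, and integrate by parts on the truncated surface, so that the interior term dies by $\delta^{g}\kappa=0$ and everything reduces to the decay bookkeeping $|\kappa|_g = \calO(r^{-2+\frac{1}{1+\be}})$, $|\omega|_g = \calO(r^{-1+\frac{1}{1+\be}})$; your exponent computations are correct and both the $L^2$ convergence and the vanishing of the boundary terms do come down to $\be_j<0$. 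What your argument buys is that it is entirely elementary and needs no constant-curvature hypothesis and no spectral input (in particular it does not have to sidestep the $K_0$-suspension exception built into Corollary~\ref{cor:invP}); what the paper's argument buys is brevity, since the relevant injectivity statement has already been established. One small inaccuracy in your closing remark: since $-1+\frac{1}{1+\be}>0$ for every $\be\in(-1,0)$, the $1$-form $\omega$ actually does decay at the cone points; this only makes your boundary estimates easier, so nothing is lost.
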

\begin{proof} This is obvious when $\be < -1/2$. In the other case, if $R(\kappa)=0$, then 
$\kappa = \calD^g \omega$ for some $\omega \in r^{-1+\frac{1}{1+\beta}}\calC^{m,\delta}_b$. Applying $B^g$ shows that
$P^g \omega=0$.  However, by Corollary~\ref{cor:invP}, since $-1 + \frac{1}{1+\be} > 0$, $P^g$ is injective 
on $r^{-1+\frac{1}{1+\be}}\calC^{2,\delta}_b$, so $\omega= \kappa = 0$. 
\end{proof}

\subsection{The Friedrichs extension and an eigenvalue estimate}\label{ss:first_eigenvalue}
We now return to the scalar Laplacian $\Delta^g$ and consider it as an unbounded operator acting on $L^2(\Sip)$. 
The maximal domain $\calD_{\max}(\Delta^g)$ of this mapping is by definition the set of all $u \in L^2$ such 
that $\Delta^g u \in L^2$. It is well-known that 
\begin{equation}
\Delta^g:  \calD_{\max}(\Delta^g) \longrightarrow L^2(\Sip)
\label{maxop}
\end{equation}
is a closed operator. By Proposition~\ref{pr:impreg}, there is a precise characterization of elements
in this maximal domain: 
\[
\calD_{\max}(\Delta^g) = \{ u = a_0 + \tilde{a}_0 \log r + \tilde{u}, \quad a_0, \tilde{a}_0 \in \RR,\ \tilde{u} \in r^2 H^2_b(\Sip) \}.
\]
To see how this follows from that result, note that $0$ is the only indicial root $\gamma$ in the range $(-1,1]$; this coincides
with the set of values where $r^\gamma \in L^2$ but $r^{\gamma-2} \notin L^2$. 

Since the mapping \eqref{maxop} is not self-adjoint, we cannot talk about its spectrum until we specify and restrict to
a domain of self-adjointness. We use the canonical choice of the Friedrichs domain $\calD_{\mathrm{Fr}}(\Delta^g)$, 
which consists of all functions $u \in L^2$ such that both $\nabla u$ and $\Delta u$ also lie in $L^2$.  This contains
those functions $u \in \calD_{\max}(\Delta^g)$ such that the coefficient $\tilde{a}_0$ in the expansion above 
vanishes. Note that a function $u$ lies in $\calD_{\mathrm{Fr}}(\Delta^g)$ if
$\Delta^g u = f \in L^2$ and if $u$ is the unique bounded solution of $\Delta^g u = f \in L^2$. 

It will be convenient for us to define a H\"older space analogue of this Friedrichs domain:
\begin{definition}
$\calD^{\ell,\delta}_{\mathrm{Fr}} = \{u \in \calC^{\ell,\delta}_b(\Sip):  \Delta^g u \in \calC^{\ell,\delta}_b(\Sip) \}.$
\label{HF}
\end{definition}
By Proposition \ref{pr:impreg}, we have that $u \in \calD^{\ell,\delta}_{\mathrm{Fr}}$ if and only if
\[
u = a_0 + a_1 r + a_2 r^{\frac{1}{1+\be}} + \tilde{u}, \qquad \tilde{u} \in r^2 \calC^{\ell+2,\delta}_b.
\]
Note the inclusion of the extra term $r^{\frac{1}{1+\be}}$ (which is unnecessary if $\be \leq -1/2$).

Since $(\Delta^g, \calD_{\mathrm{Fr}})$ is self-adjoint, we may talk about its spectrum.
Our final result in this section is an estimate (and rigidity statement) for its first nonzero eigenvalue.

\begin{proposition} Suppose that $(\Sip, g)$ is conic with all cone angles in $(0,2\pi)$ and has constant Gauss curvature $K_0>0$. 
Let $\lambda_1$ be the first nonzero eigenvalue of the Friedrichs extension of $\Delta^g$. Then $\lambda_1 \geq 2K_0$, 
with equality if and only if $\Sigma$ is either the round $2$-sphere with constant Gauss curvature $K_0$ or else 
there are precisely two conic points with cone angles $\theta_1=\theta_2$ and $\Sip$ is the $K_0$-suspension of a 
circle of length $2\pi (1+\be)$. 
\label{eigenbound}
\end{proposition}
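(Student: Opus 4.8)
The plan is to combine the Lichnerowicz--Obata-type lower bound for $\lambda_1$ on Alexandrov spaces with curvature $\ge K_0 > 0$ with the diameter rigidity statement of Theorem~\ref{th:diam}, and then to identify the two equality cases geometrically using the remarks already made in \S\ref{metric}. Recall from that section that a conic surface $(\Sip,g)$ with constant Gauss curvature $K_0>0$ and all cone angles in $(0,2\pi)$ is an Alexandrov space with curvature $\ge K_0$. The bound $\lambda_1 \ge 2K_0$ is then exactly the (one-dimensional, i.e.\ surface) case of the Lichnerowicz eigenvalue estimate in the synthetic setting; the subtle point is that the relevant Laplacian is the \emph{Friedrichs} extension, which is precisely the self-adjoint operator associated to the Dirichlet form $u \mapsto \int_{\Sip}|\nabla u|^2\,dA_g$ on the closure of $\calC^\infty_c(\Sip)$ in the energy norm, so the variational characterization of $\lambda_1$ used in any comparison argument applies without modification.

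First I would establish the inequality $\lambda_1 \ge 2K_0$. The quickest route in this smooth-away-from-cone-points setting is a Bochner argument: let $u$ be a Friedrichs eigenfunction with $\Delta^g u = \lambda_1 u$; by Proposition~\ref{pr:impreg} applied to $\Delta^g$ (with the indicial roots $\Gamma_0(\be)$ computed in \S\ref{ss:indicial_roots}) one has the expansion $u = a_0 + a_1 r + a_2 r^{1/(1+\be)} + \tilde u$ near each cone point, with $\tilde u \in r^2\calC^{\ell+2,\delta}_b$, and in particular $u$, $\nabla u$, $\nabla^2 u$ all lie in $L^2$ with the boundary terms in Green's identities vanishing as $\e\to 0$ on $\Sip\setminus\bigcup_j B_\e(p_j)$. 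Integrating the Bochner formula $\tfrac12\Delta|\nabla u|^2 = -|\nabla^2 u|^2 - \langle \nabla u,\nabla\Delta u\rangle + K_0|\nabla u|^2$ over $\Sip$, using $\Delta^g u = \lambda_1 u$ and $|\nabla^2 u|^2 \ge \tfrac12(\Delta u)^2$ in two dimensions, yields $0 \ge (\lambda_1/2 - K_0)\|\nabla u\|^2$, i.e.\ $\lambda_1 \ge 2K_0$ since $\|\nabla u\|\neq 0$. Alternatively one may quote the Alexandrov-space Lichnerowicz inequality directly. The main obstacle I anticipate is exactly the justification that the conic points contribute no boundary terms in these integrations by parts: here the decay of $\tilde u$ from Proposition~\ref{pr:impreg} together with $\be\in(-1,0)$ (so $1/(1+\be) > 1$ and $2 - 2\cdot\frac{\be}{1+\be}>0$) is what makes the flux of $|\nabla u|^2$ and of $u\,\partial_\nu u$ across $\partial B_\e(p_j)$ tend to $0$; I would check these rates carefully once, as they also control the use of $|\nabla^2 u|^2\ge\tfrac12(\Delta u)^2$ near the cone points.

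Next I would treat the equality case $\lambda_1 = 2K_0$. Equality in the Bochner argument forces $\nabla^2 u = \tfrac{\Delta u}{2}g = -K_0 u\, g$, i.e.\ $u$ is a nonconstant solution of the Obata equation $\mathrm{Hess}\,u = -K_0 u\, g$ on $(\Sip, g)$ (away from the cone points, hence everywhere by the expansion, which in the equality case improves since the $a_1 r$ and $a_2 r^{1/(1+\be)}$ terms are incompatible with $\mathrm{Hess}$ being pure trace and bounded --- in fact one finds $u = a_0\cos(\sqrt{K_0}\,d(p,\cdot))$ along geodesics from a maximum point). Following the classical Obata rigidity, the flow of $\nabla u / |\nabla u|$ realizes $(\Sip,g)$ as a warped-product metric $dt^2 + \sqrt{K_0}^{-2}\sin^2(\sqrt{K_0}\,t)\,g_{N}$ over the level set $N = \{u = 0\}$, and the maximum and minimum of $u$ are attained at single points at distance $\pi/\sqrt{K_0}$, so $\operatorname{diam}(\Sip,g) = \pi/\sqrt{K_0}$. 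At this stage I invoke Theorem~\ref{th:diam} and the discussion immediately following it in the excerpt: a conic surface with constant curvature $K_0 > 0$, all cone angles $< 2\pi$, and diameter $\pi/\sqrt{K_0}$ is necessarily the $K_0$-suspension of a circle of length $\le 2\pi$; the underlying space is $S^2$, with exactly two conic points of equal angle $\theta_1 = \theta_2 = 2\pi(1+\be)$ for some $\be\in(-1,0)$ (the circle having length $2\pi(1+\be)$), unless the cone angles are $2\pi$ in which case $\Sip$ is the round sphere of curvature $K_0$. This gives precisely the two alternatives in the statement. Conversely, on the round sphere the first nonzero eigenvalue of $\Delta$ is $2K_0$ (the linear coordinate functions), and on the $K_0$-suspension of a circle of length $2\pi(1+\be)$ the rotationally invariant model $g_{\be,K_0}$ from \eqref{models1} admits the eigenfunction $u = \cos(\sqrt{K_0}\,r)$ with $\Delta^g u = 2K_0 u$, which one checks lies in the Friedrichs domain; hence equality holds in both cases, completing the proof.
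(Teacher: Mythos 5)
Your proposal is correct and follows essentially the same route as the paper: the Bochner/Weitzenb\"ock identity together with the two-dimensional inequality $|\nabla^2 u|^2 \ge \tfrac12(\Delta^g u)^2$ (justified at the cone points by the expansion from Proposition~\ref{pr:impreg}) for the bound $\lambda_1 \ge 2K_0$, and in the equality case the Obata equation $\nabla du = -K_0 u\, g$ integrated along a minimizing geodesic from $p_{\max}$ to $p_{\min}$, combined with the diameter rigidity of Theorem~\ref{th:diam}, to identify the two model surfaces. The only slip is a sign: with your (positive) Laplacian convention the integrated Bochner identity yields $0 \le (\lambda_1/2 - K_0)\|\nabla u\|^2$, which is what gives $\lambda_1 \ge 2K_0$.
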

\begin{proof} If $u \in \calD_{\mathrm{Fr}}(\Delta^g)$ and $\Delta^g u = \lambda_1 u$, then near each $p_j$, 
\[
u \sim a_0 + r^{\frac{1}{1+\be_j}}(a_1^+ \cos y + a_1^- \sin y) + \ldots.
\]
Consequently $|du|_g = \calO(r^{-1+ \frac{1}{1+\be_j} })$, which allows one to justify the integration by parts in
\[
\int_{\Si}  \langle \Delta_1 du, du \rangle = \int_\Si \left(|\nabla du|^2 + K_0|du|^2\right),
\]
where $\Delta_1 = \nabla^*\nabla + \Ric^g = \nabla^*  \nabla + K_0$ is the Hodge Laplacian on $1$-forms. 

Noting that $\Delta_1 du = d \Delta_0 u = \lambda_1 du$, and applying the Cauchy-Schwarz inequality,
$|\nabla du|^2 \geq \frac{1}{2} (\Delta^g u)^2$, we see that
\begin{multline*}
\lambda_1 \|du\|^2 = \int_{\Si} \langle \Delta_1 du,du \rangle  \\
\geq \int_{\Si} \frac{1}{2} (\Delta^g u)^2 + K_0 |du|^2 =  \int_{\Si}\frac{1}{2} \langle \Delta_1 du,du) +K_0 |du|^2.
\end{multline*}
The integration by parts $\langle \delta du, \delta du \rangle = \langle d\delta du, du \rangle$ is justified as before.
Rearranging this yields $\lambda_1 \geq 2K_0$, as desired.

Now consider the rigidity statement. If $\lambda_1=2K_0$, then we have equality
$|\nabla du|^2 = \frac{1}{2} (\Delta^g u)^2$, so the Hessian of $u$ is pure trace:
\[
\nabla du = -\frac{1}{2}(\Delta^g u) g.
\]
We can solve $\nabla du = - K_0u \cdot g$ along a geodesic $\gamma(t)$:
\begin{equation}
\frac{d^2}{dt^2} (u \circ \gamma)(t) = \nabla du (\gamma'(t),\gamma'(t)) = - K_0(u \circ \gamma)(t)
\label{eq:ODE}
\end{equation}
Since $u$ is continuous across the conic points, it attains its maximum and minimum at points that we denote
$p_{\max}$ and $p_{\min}$. We have shown that $du$ vanishes at the conic points, so $du(p_{\max}) = du(p_{\min}) = 0$, 
regardless of whether $p_{\max/\min}$ is equal to one of the $p_j$ or not.
Since $u$ is nontrivial, $u(p_{\max}) > 0$ and $u(p_{\min}) < 0$. 

Now connect $p_{\max}$ to $p_{\min}$ by a minimizing geodesic $\gamma:[0,\ell] \rightarrow \Si$. 
Since $\Si$ is an Alexandrov space, $\gamma$ avoids the singular points  except possibly at
its endpoints. Multiply $u$ by a positive constant so that $u(p_{\max}) = 1$. Then the solution of \eqref{eq:ODE} is
\[
(u \circ \gamma)(t) = \cos (\sqrt{K_0}\,t).
\]
Since $du(p_{\min})=0$, we must have $\ell \geq\pi/\sqrt{K_0}$. Combining this with the diameter estimate in
Theorem \ref{th:diam}, we conclude that $\ell =\pi/\sqrt{K_0}$.

We have now obtained two points $p_{\max}, p_{\min} \in \Si$ with $\mbox{dist}\,(p_{\max}, p_{\min} ) 
= \pi/\sqrt{K_0}$. The rigidity part of Theorem \ref{th:diam} now gives that $\Si$ is either a 
round sphere with constant curvature $K_0$ (so $p_{\max/\min}$ are smooth points) or else the
$K_0$-suspension of a circle of length $\theta<2\pi$ (in which case $p_{\max/\min}$ are the two
conic points). 
\end{proof}

An immediate corollary of this result is as follows. 
\begin{corollary}
If $(\Sip, g)$ is conic and has constant curvature $K_0$, then the Friedrichs extension of $P^g$ on $L^2$ 
is invertible except when $k=2$ and $(\Sip,g)$ is the spherical suspension as in the theorem above. 
\label{cor:invP}
\end{corollary}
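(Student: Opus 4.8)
The plan is to reduce the statement to the spectral theory of the scalar Laplacian via the Weitzenb\"ock identity \eqref{eq:Weitz}. Since $K^g\equiv K_0$ is constant, that identity reads $P^g=\tfrac12(\Delta_1^g-2K_0)$, where $\Delta_1^g$ is the Hodge Laplacian on $1$-forms; the Friedrichs extensions of $P^g$ and of $\Delta_1^g$ have the same form domain, so the Friedrichs extension of $P^g$ is invertible if and only if $2K_0\notin\spec(\Delta_1^g)$. The whole question thus becomes: when does $2K_0$ lie in the (Friedrichs) spectrum of $\Delta_1^g$?

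The non-positive cases are immediate. If $K_0<0$ then $\Delta_1^g\ge 0$, so $2K_0<0$ is not in its spectrum and $P^g$ is invertible. If $K_0=0$ one needs only that there are no $L^2$ harmonic $1$-forms; but by \eqref{eq:GB2} the vanishing of $\chi(\Sip,\vec\be)$ forces $\chi(\Si)=-\sum_j\be_j>0$, hence $\Si=S^2$ and $H^1(\Si)=0$. It therefore remains to handle $K_0>0$, where \eqref{eq:GB2} again gives $\Si=S^2$. Here I would invoke the Hodge decomposition of $L^2$ $1$-forms on $(\Sip,g)$: since $H^1(\Si)=0$ there are no $L^2$ harmonic $1$-forms, and the operators $d$, $\delta$ and the Hodge star identify $\spec(\Delta_1^g)=\spec(\Delta_0^g)\setminus\{0\}$ (each positive scalar eigenvalue appears among $1$-forms, and conversely). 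Consequently $2K_0\in\spec(\Delta_1^g)$ precisely when $2K_0$ is a nonzero eigenvalue of the Friedrichs extension of $\Delta_0^g$.

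Now Proposition~\ref{eigenbound} does the work. It gives $\lambda_1(\Delta_0^g)\ge 2K_0$, with equality only when $\Si$ is the round $2$-sphere of curvature $K_0$ or when $k=2$ and $(\Sip,g)$ is the $K_0$-suspension of a circle of length $2\pi(1+\be)$. The round sphere has no cone points and so is excluded, so $2K_0$ is an eigenvalue of $\Delta_0^g$ exactly in the $k=2$ spherical suspension case; in all other cases $2K_0$ falls in the spectral gap $(0,\lambda_1(\Delta_0^g))$, so $P^g$ is invertible. In the exceptional case, a first eigenfunction $f$ of $\Delta_0^g$ yields $\omega=df$ in the Friedrichs domain with $P^g\omega=0$, so $P^g$ is genuinely non-invertible there, matching the statement.

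The only delicate point — and the one I expect to require actual care — is the identity $\spec(\Delta_1^g)=\spec(\Delta_0^g)\setminus\{0\}$, or more precisely the verification that $d$, $\delta$ and $\star$ carry Friedrichs domains to Friedrichs domains and that the space of $L^2$ harmonic $1$-forms is $H^1(\Si)$ rather than $H^1(\Sip)$. This is the standard Hodge theory of conic surfaces with cone angles less than $2\pi$, consistent with the indicial-root computations of \S\ref{ss:indicial_roots}; I would either cite \cite{Ma} or use the elementary substitute that for an eigenform $\omega$ of $\Delta_1^g$ with eigenvalue $2K_0>0$ the scalar functions $\delta^g\omega$ and $\star\,d\omega$ are $\Delta_0^g$-eigenfunctions with eigenvalue $2K_0$, not both zero when $\omega\ne 0$.
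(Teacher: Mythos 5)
Your argument is correct, and for the main case $K_0>0$ it is essentially identical to the paper's: write $2P^g=\Delta_1^g-2K_0$ via \eqref{eq:Weitz}, identify the nonzero Friedrichs spectra of $\Delta_1^g$ and $\Delta_0^g$, and invoke Proposition~\ref{eigenbound} together with its rigidity statement. The point you single out as delicate --- that $d$, $\delta^g$ and $\star$ intertwine the Friedrichs extensions --- is exactly the step the paper also asserts without proof, and your ``elementary substitute'' (sending a $\Delta_1^g$-eigenform $\omega$ with eigenvalue $2K_0>0$ to the scalar eigenfunctions $\delta^g\omega$ and $\star\,d\omega$, not both zero) is the cleaner way to get the one inclusion of spectra actually needed for the lower bound; one only has to check, via the indicial roots of \S\ref{ss:indicial_roots}, that these scalar functions lie in the Friedrichs domain of $\Delta_0^g$.

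The one place you genuinely diverge is the flat case $K_0=0$. You reduce to the absence of $L^2$ harmonic $1$-forms and then argue topologically: $\chi(\Si)=-\sum_j\be_j\in(0,k)$ forces $\Si=S^2$, hence $H^1(\Si)=0$. This works, but it imports the identification of $\ker\Delta_1^{g,\mathrm{Fr}}$ with $H^1(\Si)$ (rather than $H^1(\Sip)$), which in degree one on a two-dimensional cone is precisely the middle-degree $L^2$-cohomology statement requiring the most care. The paper's route is lighter and self-contained: for $K_0=0$ one has $2P^g=\nabla^*\nabla$, so integration by parts on the Friedrichs domain gives $\nabla\omega=0$; a parallel $1$-form has constant norm, while the indicial roots $\pm1+k/(1+\be)$ of $P^g$ never equal $0$ for $\be\in(-1,0)$, so any solution either vanishes or blows up at each cone point --- contradiction. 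You may want to adopt that argument, or at least note that your Hodge-theoretic input can be replaced by it; either way the corollary stands.
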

\begin{proof}
If $K_0 \leq 0$, then $2P^g = \nabla^* \nabla - K_0 \geq 0$. If $K_0 < 0$, this inequality is strict and $P^g$ is invertible.
If $K_0 = 0$ and $\omega$ is in the nullspace, then integrating by parts gives that $\nabla \omega = 0$. However, by the 
indicial root computations above, any nontrivial solution of $P^g \omega=0$ must either vanish or blow up at each 
$p_j$, hence cannot be parallel. 

Now suppose that $K_0 > 0$. If $\Delta^g_1  = \nabla^* \nabla + K_0$ is the Hodge Laplacian on $1$-forms, 
then $2P^g = \Delta^g_1 - 2K_0$,  so our assertion is equivalent to the statement that the first nonzero eigenvalue 
of $\Delta^g_1$ is strictly larger than $2K_0$, except in the one special case.  But now recall that in two dimensions, the nonzero spectra
of $\Delta^g_0$ and $\Delta^g_1$ are the same because on the Friedrichs domain, $\delta^g$, $d$ and $\star$ commute 
with the Hodge Laplacian (on forms of degrees $0$, $1$ and $2$ collectively), so the result follows from the previous Proposition. 
\end{proof}

\section{The slice and deformation theorems}\label{conic_deformation_theory}
We now extend the results of \S 3.4 to surfaces with conic singularities.  As before we first study the local structure 
of the ungauged moduli space of conic constant curvature metrics and then construct a slice for the diffeomorphism action. 

\subsection{Spaces of conic constant curvature metrics and diffeomorphisms}
Fix a reference conic metric $g_0$ with smooth coefficients relative to the basis $dr^2$, $r dr dy$, $r^2 dy^2$. 

We begin by defining the space $\CM^{\ell,\delta, \nu}(\Sip)$ of conic metrics $g$ with coefficients in the $b$-H\"older 
space $\calC^{\ell,\delta}_b$, and such that near each cone point $p_j$, $|g - a_j g_0|_{g_0} \leq C r^\nu$ for some $a_j > 0$.
Fix mutually disjoint neighborhoods $\calU_j \ni p_j$ and cutoff functions $\chi_j \in \calC^\infty_0(\calU_j)$ with 
$\chi_j \equiv 1$ near $p_j$.  Set 
\[
g_0(a,\eta) :=\Bigl( 1 + \sum_j \chi_j  (a_j r^{2\eta_j}-1) \Bigr) g_0,  \quad a_j \in \RR^+,  \  \eta_j \in (-1-\beta_j, -\beta_j),
\]
which is a $2k$-dimensional family of conic metrics, and then define
\[
\CM^{\ell,\delta,\nu}(\Sip) = \{ g = g_0(a,\eta) + h :  h \in r^\nu \calC^{\ell,\delta}_b \}.
\]
Writing $h = f g_0 + h^0$ where $\tr^{g_0} h^0 = 0$, then 
\[
g = \Bigl(1 + \sum_j \chi_j (a_j r^{2\eta_j}-1) + f \Bigr)g_0 + h^0.
\]

Next consider the space of conic metrics with curvature $K_0 \in \RR$, 
\[
\CM_{K_0}^{\ell,\delta,\nu}(\Sip):=\{ g \in \CM^{\ell,\delta,\nu}(\Sip) : K^g = K_0 \},
\]
and their union, the space of conic metrics with (any) constant curvature 
\[
\CM_{\cc}^{\ell,\delta,\nu}(\Sip):=  \{ g \in \CM^{\ell,\delta,\nu}(\Sip) : K^g = \mathrm{const.} \} = \bigcup_{K_0 \in \RR}\CM_{K_0}^{\ell,\delta,\nu}(\Sip).
\]
There is an $\RR^+$ action on this space via metric scaling, but we typically normalize by demanding that the metrics have unit area, 
so that the genus and the cone angles determine $K_0$. 

A primary consideration below is the action of the relevant diffeomorphism group on this space of metrics. 
To this end, we consider the Banach Lie group $\Diff^{\ell+1,\delta, \nu+1}(\Sip)$.   It is only necessary to specify
the topology on a neighborbood $\calW$ of the identity in this group, and for this we define
\begin{multline*}
\Diff^{\ell+1,\delta, \nu+1}_b(\Sip) \supset \calW :=\bigl\{\exp(X) : X= \sum_j \chi_j (a_j r \del_r + b_j \del_y) + \tilde X :\\
\vec a,\vec b \in \RR^k, \tilde X \in r^{\nu+1}\calC^{\ell+1,\delta}_b(\Sip; T\Sip), \quad |\vec a| + |\vec b| + 
||\tilde X||_{b; \ell+1, \delta, \nu+1} < \epsilon \bigr\}.
\end{multline*}
Noting that $r\del_r$ and $\del_y$ are the dilation and rotation vector fields near each $p_j$, we see that locally near any $p_j$, any such 
diffeomorphism $F$ is the composition of a dilation and rotation and a diffeomorphism of a punctured disk which equals the identity
to order $\nu+1$. 

We shall also want to consider conic metrics in conformal form, and it is necessary to show that the various ways of representing these
metrics are equivalent. Recall that the classical existence of isothermal coordinates asserts that if $g$ is a  $\calC^{\ell,\delta}$ 
metric in some small ball, then there exists a local $\calC^{\ell+1,\delta}$ diffeomorphism $F$ and a function 
$\phi \in \calC^{\ell,\delta}$ such that $F^* g = e^{2\phi} |dz|^2$. The analogous theorem is true here.
\begin{proposition}
Suppose that $g \in \CM^{\ell,\delta,\nu}(\calU)$, where $\calU = \{w: |w| < 1\}$. Then there exist $F \in \Diff^{\ell+1,\delta,\nu+1}_b(\calU')$
and $\phi \in \RR \oplus r^\nu \calC^{\ell,\delta}_b$, both defined in some possibly smaller ball $\calU'$, so that
\[
F^* g = e^{2\phi} g_\be = e^{2\phi} (dr^2 + (1+\be)^2 r^2 dy^2).
\]
\label{confparam}
\end{proposition}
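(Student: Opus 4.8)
The plan is to reduce the statement to the classical existence of isothermal coordinates on the punctured disk together with the conic elliptic theory of \S\ref{s:elliptic_theory}. Write $g \in \CM^{\ell,\delta,\nu}(\calU)$ in the form $g = g_0(a,\eta) + h$ with $h \in r^\nu\calC^{\ell,\delta}_b$; near the single cone point in $\calU$ the background $g_0$ is flat, so after an initial constant rescaling (absorbed into the additive constant of $\phi$) and a change of the radial variable $r = \rho^{1+\be}/(1+\be)$ as in \S 2.1, we may assume $g = e^{2\psi_0}|dz|^2 |z|^{2\be} + (\text{lower order})$ on a punctured disk, i.e.\ $g$ restricted to $\Sip \cap \calU = \calU \setminus \{0\}$ is a smooth (indeed $\calC^{\ell,\delta}$) Riemannian metric there. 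First I would invoke the classical isothermal coordinate theorem on a small punctured annulus $\{0 < |w| < 1\}$ — or better, on a full disk centered at a nonsingular point and then propagate — to produce a local $\calC^{\ell+1,\delta}$ diffeomorphism $\tilde F$ and a conformal factor so that $\tilde F^* g = e^{2\tilde\phi}|dz|^2$ away from the puncture. The real content is that this local uniformizing map extends across the puncture with the required $b$-regularity and that $\tilde\phi$ has the claimed form $\RR \oplus r^\nu\calC^{\ell,\delta}_b$.

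To control the behavior of $\tilde F$ and $\tilde\phi$ near the cone point, I would set up the problem as a conic PDE. Seeking $F = \exp(X)$ and writing $F^*g = e^{2\phi}g_\be$, the linearization at the identity of $h \mapsto (X,\phi)$ is governed by the conformal Killing operator $\calD^g$ and the trace map: a symmetric $2$-tensor $h$ can be written $h = \calD^g\omega + \kappa + f\cdot g_\be$ where $\kappa$ is the transverse-traceless part. Here $\kappa \in \Stt^\sing$ a priori, since its coefficients could grow like $r^{-2 + 1/(1+\be)}$; but a transverse-traceless tensor on the disk that extends holomorphically as a quadratic differential across the puncture (which is forced since $g$ and $g_\be$ both define holomorphic structures extending over $p_j$, by our conventions in \S 1) is actually regular, so $\kappa \in r^\nu\calC^{\ell,\delta}_b$ after possibly enlarging by the dilation/rotation modes. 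The equation $\delta^g(h - \calD^g\omega) = 0$ is $P^g\omega = \delta^g h$ with $\delta^g h \in r^{\nu - 1}\calC^{\ell-1,\delta}_b$; by Corollary~\ref{cor:invP} and Propositions~\ref{pr:duality} and \ref{pr:impreg}, this has a solution $\omega \in \RR\cdot(r\del_r,\del_y) \oplus r^{\nu+1}\calC^{\ell+1,\delta}_b$, which is exactly the decay built into $\calW \subset \Diff^{\ell+1,\delta,\nu+1}_b$. The residual trace term $f$ is then read off, and the indicial root computation in \S\ref{ss:indicial_roots} — where $0$ is a double root of $\Delta$ with solutions $1$ and $\log r$, but $\log r$ is excluded by $L^2$/boundedness of a genuine metric — forces $\phi \in \RR \oplus r^\nu\calC^{\ell,\delta}_b$, with the constant accounting for the overall scale $a_j$.

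The cleanest way to organize this is probably a fixed-point / implicit-function argument rather than a direct linear solve: the map $(X,\phi) \mapsto \exp(X)^*g - e^{2\phi}g_\be$ has surjective, split linearization at $(0,\phi_0)$ on the relevant weighted $b$-H\"older spaces — surjectivity being precisely the decomposition lemma (Lemma on $\calC^{2,\alpha}$-splitting, conic version) combined with invertibility of $P^g$ from Corollary~\ref{cor:invP} — so the inverse function theorem in Banach spaces produces $(X,\phi)$ with the stated regularity, provided $h$ is small, i.e.\ after shrinking $\calU$ to $\calU'$. I expect the main obstacle to be the bookkeeping at the borderline weight: when $-1/2 \le \be < 0$ the indicial root $-2 + 1/(1+\be)$ (for $L$) and $-1 + 1/(1+\be)$ (for $P$) lie in the range where the regularization map $R$ of \S 5.3 is nontrivial, so one must carefully peel off the finite-dimensional space $X_\be$ of \S 5.1 before the remainder lands in $r^\nu\calC^{\ell,\delta}_b$; the isomorphism in Lemma~\ref{lem:iso} is exactly what guarantees this peeling is unobstructed, and identifying the peeled-off modes with the dilation/rotation part of $\exp(X)$ is the delicate point. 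Everything else is a routine, if lengthy, verification that the classical isothermal construction interacts correctly with the $b$-calculus, which we may cite from \cite{Ma}.
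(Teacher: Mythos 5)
Your proposal has a genuine gap: the linear machinery you invoke does not apply to this local problem. Corollary~\ref{cor:invP} concerns the Friedrichs extension of $P^g$ on $L^2$ of a \emph{closed} conic surface of \emph{constant} curvature; here $g$ is an arbitrary element of $\CM^{\ell,\delta,\nu}(\calU)$ on a disk, with no curvature condition and with a boundary, so there is no invertibility statement to quote — you would have to formulate and solve a boundary value problem for $P^g$ on $\calU'$, which you never set up. The same objection applies to the decomposition $h = \calD^g\omega + \kappa + f\cdot g_\be$: the splitting into $\operatorname{ran}\calD \oplus \ker\delta$ is proved in the paper via global Fredholm theory on a closed surface, and on a disk the space of divergence-free trace-free tensors is infinite dimensional, so the assertion that $\kappa$ is ``forced'' to be a holomorphic quadratic differential regular at the puncture (because the holomorphic structures extend) is unjustified — the TT part of a generic $h\in r^\nu\calC^{\ell,\delta}_b$ with respect to a non-flat $g$ is not holomorphic, and nothing in your argument controls it. Finally, the scheme presupposes the existence of the decomposition whose existence is the content of the proposition: the implicit function theorem based at $(0,\phi_0)$ requires identifying the base point and verifying smallness of $g - e^{2\phi_0}g_\be$ in the weighted norm (which can be arranged only after reducing the weight $\nu$ and shrinking the ball — a point you gesture at but do not carry out), and your appeal to classical isothermal coordinates ``away from the puncture'' leaves open exactly the issue at stake, namely whether a single uniformizing map exists on the punctured disk with the stated behavior (including the absence of monodromy) at the cone point.

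For contrast, the paper's proof sidesteps all of this with an elementary reduction: set $\hat g = |w|^{-2\be}g$, which removes the cone angle and places $\hat g$ in $\CM^{\ell,\delta,\nu}$ relative to the smooth flat metric; then solve the Beltrami equation $\del_{\bar z}F = \mu\,\del_zF$, where $\mu$ is an explicit algebraic expression in the coefficients of $\hat g$ and hence lies in $r^\nu\calC^{\ell,\delta}_b$ (so $\|\mu\|_{L^\infty}<1$ after shrinking the ball), with first-order conic elliptic regularity giving $F\in\Diff^{\ell+1,\delta,\nu+1}_b(\calU')$; the conformal factor is then read off algebraically, and the factor $|w|^{2\be}$ is restored at the end using $F^*|w| = e^{2\psi}|z|$. (An equivalent route via harmonic conjugates, $\Delta_{\hat g}u=0$, $dv=\star du$, $F=u+iv$, is also given.) No decomposition of symmetric $2$-tensors, no operator $P^g$, and no nonlinear iteration on the metric are needed. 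If you want to salvage your approach, the minimal repair is to replace Corollary~\ref{cor:invP} by a local Dirichlet solvability statement for $P^g$ on $\calU'$ in the weighted $b$-H\"older scale and to prove a local splitting lemma for trace-free tensors on the disk; but at that point the Beltrami-equation argument is both shorter and sharper.
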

\begin{proof}
Let us first reduce to the case where the cone angle is $2\pi$ by setting $\hat{g} = |w|^{-2\beta}g$. This lies in $\CM^{\ell,\delta,\nu}(\calU)$
relative to $g_0 = dr^2 + r^2 dy^2$.  We then invoke the standard result about isothermal coordinates, but with the regularity
statement adapted to the conic setting.  There are two ways to do this.  The first is to choose the diffeomorphism $F(z) = w$ 
as a solution to the Beltrami equation
\[
\del_{\bar{z}} F =  \mu \del_{z} F,
\]
where $\mu$ is given by an explicit algebraic expression involving the coefficients of $\hat{g}$. That expression shows directly that
$\mu \in r^\nu \calC^{\ell,\delta}_b$.  This equation is solvable if $||\mu||_{L^\infty} < 1$, and we can ensure this by restricting
to a smaller ball $\calU'$ and dilating. It then follows by  conic elliptic theory (we do not spell out the details for this first order case) 
that $F \in \Diff^{\ell+1,\delta,\nu+1}_b(\calU')$, as claimed. Writing $F^* \hat{g} = e^{2\hat{\phi}} g_0$, 
then the conformal factor here is also given by an explicit algebraic expression in the coefficients of $\hat{g}$, from which it follows that
$\phi - \lambda \in r^\nu\calC^{\ell, \delta}_b$ for some $\lambda \in \RR$.

An alternate way to obtain $F$ is as follows: solve $\Delta_{\hat{g}}u = 0$ with $u(0) = 0$ and and $du|_0 \neq 0$, then choose $v$ 
so that $v(0) = 0$ and $dv = * du$, and finally set $F = u + i v$.  We must restrict to a smaller ball to ensure that $F$ is a diffeomorphism. 
The first order terms in the Laplacian are in $r^{\nu-1}\calC^{\ell-1, \delta}_b$, so the regularity theory from \S 4.3 gives that 
$F \in \Diff^{\ell+1,\delta,\nu+1}_b(\calU')$.

We have now proved that $F^* ( |w|^{-2\beta} g) = e^{2\hat{\phi}} |dz|^2$. Since $F^* |w| = e^{2\psi} |z|$ where
$\psi  - \lambda' \in r^\nu \calC^{\ell,\delta}_b$ for some $\lambda' \in \RR$, we can raise this to the power $-2\beta$ 
and transfer to the other side to conclude the proof. 
\end{proof}

We next prove a sharp regularity result. 
\begin{proposition}
Fix $g, \hat{g} \in \CM^{\ell,\delta,0}$ such that $e^{2\phi}g = \hat{g}$.  If $K^g, K^{\hat{g}} \in \calC^{\ell-2,\delta}_b(\Sip)$ and 
$g$ and $\hat{g}$ have the same cone angles, then $\phi \in \calD^{\ell,\delta}_{\mathrm{Fr}}(\Sip)$.
If $g$ and $\hat{g}$ are, in addition, both polyhomogeneous, then so is $\phi$. 
\label{sharpregres}
\end{proposition}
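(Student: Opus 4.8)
The plan is to reduce Proposition~\ref{sharpregres} to the scalar conformal curvature equation for $\phi$ and then feed it into the conic elliptic package of \S\ref{s:elliptic_theory}. In two dimensions the transformation law for the Gauss curvature under $\hat g=e^{2\phi}g$ reads
\[
\Delta^g\phi=e^{2\phi}K^{\hat g}-K^g,
\]
with our sign convention that $\Delta^g$ has nonnegative spectrum; this is the only place the hypothesis that $g$ and $\hat g$ are conformal enters, and the whole statement becomes a regularity assertion for solutions of this single semilinear scalar equation.

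The first task is to pin down the a priori regularity of $\phi$. Since $g$ and $\hat g$ are conformal they induce the same conic conformal structure, and by hypothesis they have the same cone angles $\vec\beta$; hence, working near each $p_j$ in a common holomorphic coordinate (via Proposition~\ref{confparam}, or directly since $e^{2\phi}$ is the pointwise quotient of the metric coefficients of $\hat g$ and $g$ in the $(r,y)$ chart, a quotient of $\calC^{\ell,\delta}_b$ functions with nonvanishing denominator), one has $g=e^{2u}g_{\beta_j}$ and $\hat g=e^{2\hat u}g_{\beta_j}$ with $u,\hat u\in\RR\oplus r^0\calC^{\ell,\delta}_b$. Equality of cone angles is exactly what forbids a $\log$-type term on $\phi=\hat u-u$, so $\phi\in\calC^{\ell,\delta}_b$; in particular $\phi$ is bounded, $e^{2\phi}\in\calC^{\ell,\delta}_b$, and together with $K^g,K^{\hat g}\in\calC^{\ell-2,\delta}_b$ the right side $f:=e^{2\phi}K^{\hat g}-K^g$ lies in $\calC^{\ell-2,\delta}_b$, one full order of $r$-decay better than a generic member of $\CM^{\ell,\delta,0}$ would produce. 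This step, and in particular the absence of $\log r$, is where I expect the only real subtlety: a difference of cone angles would force exactly such a term, arising from $\partial_\eta\bigl(r^{2\eta}g_\beta\bigr)\big|_{\eta=0}=2\log r\,g_\beta$.

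Granting this, I would apply the sharp conic elliptic regularity Proposition~\ref{pr:impreg} to $\Delta^g\phi=f$. The indicial roots of $\Delta^g$ at $p_j$ were computed in \S\ref{ss:indicial_roots} to be $\Gamma_0(\beta_j)=\{k/(1+\beta_j):k\in\ZZ\}$, of which only $0$ (a double root) and, when $\beta_j>-1/2$, $\pm 1/(1+\beta_j)$ lie in $(-2,2)$. Boundedness of $\phi$ removes the $\log r$ solution at $0$ and the negative-exponent solution, so Proposition~\ref{pr:impreg}, iterated finitely many times to push the remainder down to weight $r^2$, yields the expansion $\phi=a_0+a_1 r+a_2 r^{1/(1+\beta_j)}+\tilde\phi$ with $\tilde\phi\in r^2\calC^{\ell+2,\delta}_b$ near each cone point; by the description of $\calD^{\ell,\delta}_{\mathrm{Fr}}$ following Definition~\ref{HF} this is precisely $\phi\in\calD^{\ell,\delta}_{\mathrm{Fr}}(\Sip)$.

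For the polyhomogeneity claim, suppose in addition $g$ and $\hat g$ are polyhomogeneous. Away from $\frakp$, $\phi=\tfrac12\log(\hat g/g)$ is smooth, so it suffices to work near a cone point, where now $K^g$ and $K^{\hat g}$ are polyhomogeneous (the Gauss curvature of a polyhomogeneous conic metric is a universal polynomial in the coefficients, their $b$-derivatives and $1/\det g$, hence polyhomogeneous). Writing $a_0=\phi(p_j)$, the equation rearranges to
\[
\bigl(\Delta^g-2e^{2a_0}K^{\hat g}\bigr)(\phi-a_0)=e^{2a_0}K^{\hat g}-K^g+e^{2a_0}K^{\hat g}\bigl(e^{2(\phi-a_0)}-1-2(\phi-a_0)\bigr),
\]
whose right side is a polyhomogeneous function plus a term quadratically small in $\phi-a_0$. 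Since $2e^{2a_0}K^{\hat g}$ is a polyhomogeneous potential of order $0$, the operator on the left has the same indicial roots as $\Delta^g$, and iterating the linear polyhomogeneity statement Proposition~\ref{pr:reg} — improving the polyhomogeneous approximation of $\phi-a_0$ at each stage so that the quadratic error acquires strictly higher order — shows that $\phi$ is polyhomogeneous.
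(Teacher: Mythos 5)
Your proof is correct and follows essentially the same route as the paper: reduce to the scalar Liouville equation $\Delta^g\phi = e^{2\phi}K^{\hat g}-K^g$, observe that the right-hand side lies in $\calC^{\ell-2,\delta}_b$ so that $\phi$ belongs to the H\"older--Friedrichs domain, and then bootstrap the partial expansion (via Propositions~\ref{pr:impreg} and~\ref{pr:reg}) to get polyhomogeneity. The only place you go beyond the paper's very terse argument is in justifying the a priori bound $\phi\in\calC^{\ell,\delta}_b$ --- equality of cone angles ruling out a $\log r$ term in the ratio of conformal factors --- a step the paper simply asserts; that justification is sound and consistent with the geometric interpretation of the indicial root $0$ given in \S 5.2.
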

\begin{proof} The function $\phi \in \calC^{\ell,\delta}_b$ satisfies
\[
\Delta^{g} \phi -K^{g} + K^{\hat{g}} e^{2\phi} = 0,
\]
so $\Delta^{g} \phi \in \calC^{\ell-2,\gamma}_b$. This means that $\phi$ is in the H\"older-Friedrichs domain. 
If $g, \hat{g}$ are both polyhomogeneous, then $\phi \in \calD^{\ell,\delta}_{\mathrm{Fr}}$ 
for every $\ell \geq 0$. By Proposition~\ref{pr:impreg}, $\phi$ has a partial expansion up to order $r^2$ with a remainder
in $r^2 \calC^{\ell,\delta}_b$ for every $\ell$. Inserting this into the equation for $\phi$ shows that $\phi$ has
an expansion to order $4$. Continuing in this way, we see that $\phi$ has a complete expansion. 
\end{proof}

\begin{lemma}\label{lem:conseq_eigenbound}
Let $g_0 \in \CM_{K_0}^{\ell,\delta,\nu}(\Sip)$ for some $\nu > 0$ and if $K_0>0$, assume that $\Sip$ is not a 
$K_0$-suspension. Then for $m \leq \ell$, 
\[
\Delta^{g_0}-2K_0 : r^\nu \calC^{m,\delta}_b \to  r^{\nu-2} \calC^{m-2,\delta}_b
\]
is injective. Furthermore, if $f \in r^{\nu-2} \calC^{m-2,\delta}_b$, then $(\Delta^g-2K_0)u = f$ has a solution 
\[
u =  \sum_j \chi_j(a_j r + b_j \log r) + \tilde u
\]
with $a,b \in \RR^k$ and $\tilde u \in r^\nu \calC^{\ell,\delta}_b$.
\end{lemma}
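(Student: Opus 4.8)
The plan is to prove the two assertions in turn. Injectivity will come from the first-eigenvalue estimate of Proposition~\ref{eigenbound}, and solvability from the conic Fredholm theory of Section~\ref{s:elliptic_theory} together with the indicial root computation of Section~\ref{ss:indicial_roots}.

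For the injectivity statement I would take $u\in r^\nu\calC^{m,\delta}_b$ (with $m\ge 2$) in the kernel, so that $\Delta^{g_0}u=2K_0u$. Since $\nu>0$, a direct estimate in conic coordinates shows $u\in L^2(\Sip)$, $|du|_{g_0}=\calO(r^{\nu-1})\in L^2$, and $\Delta^{g_0}u=2K_0u\in L^2$; thus $u$ lies in the Friedrichs domain of $\Delta^{g_0}$ and is an eigenfunction with eigenvalue $2K_0$ (or is zero). If $K_0<0$ this is impossible, the Friedrichs extension being nonnegative; if $K_0=0$ then $u$ is constant, hence zero because it decays; and if $K_0>0$, Proposition~\ref{eigenbound} gives $\lambda_1\ge 2K_0$, with equality only for the round sphere or a $K_0$-suspension, neither of which occurs here — the first since our metric has a cone point of angle in $(0,2\pi)$, the second by hypothesis — so $0<2K_0<\lambda_1$ is not in the spectrum and $u=0$. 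Note this argument in fact shows $\Delta^{g_0}-2K_0$ has trivial kernel on $r^\mu\calC^{2,\delta}_b$ for every $\mu>0$; I will reuse this below.

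For solvability, write $A:=\Delta^{g_0}-2K_0$ and observe that its indicial operator at each $p_j$ coincides with that of $\Delta^{g_{\be_j}}$ (the term $-2K_0$ being lower order), so by Section~\ref{ss:indicial_roots} the indicial roots of $A$ are $\bigcup_j\{k/(1+\be_j):k\in\ZZ\}$, with $0$ a double root and the nearest nonzero ones $\pm 1/(1+\be_j)$ of absolute value exceeding $1$. Choosing $\nu$ noncritical (and, as we may, below $\min_j 1/(1+\be_j)$), Proposition~\ref{pr:fredsh} makes $A\colon r^\nu\calC^{\ell,\delta}_b\to r^{\nu-2}\calC^{\ell-2,\delta}_b$ Fredholm, with trivial kernel by the first part. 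Its cokernel I would identify, using the self-adjointness of $A$ with respect to $dA_{g_0}$ (the kernel/cokernel duality at reflected weights, Proposition~\ref{pr:duality} and the discussion preceding it), with the nullspace of $A$ at the reflected weight $2-\nu>0$; that nullspace is trivial by the remark ending the injectivity step, so $A$ is onto. A solution $u$ is then put into the stated normal form by Proposition~\ref{pr:impreg}: the only exponents of $A$ between $0$ and $\nu$ are those carried by the double indicial root at the origin, contributing exactly the finitely many terms $\chi_j r$ and $\chi_j\log r$ (which are in any case absorbed into $r^\nu\calC^{\ell,\delta}_b$ when $\nu\le 1$), leaving $\tilde u\in r^\nu\calC^{\ell,\delta}_b$.

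The crux, and the step most in need of care, is the cokernel bookkeeping in the solvability part: identifying $\operatorname{coker}(A|_{r^\nu\calC_b})$ with a nullspace at the reflected weight so that the eigenvalue estimate annihilates it, and then reading off from Proposition~\ref{pr:impreg} exactly which indicial data survive — which is precisely where $\nu$ being below the first nonzero indicial root enters. Everything else is routine once Proposition~\ref{eigenbound} is available. A small technical point is that the coefficients of $A$ are only finitely regular rather than polyhomogeneous, so the regularity results of Section~\ref{s:elliptic_theory} are used in their finite-order form; this still yields the partial expansion above with a remainder in the asserted $b$-Hölder space.
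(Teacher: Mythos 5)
Your injectivity argument is correct and is essentially the paper's: a solution of $(\Delta^{g_0}-2K_0)u=0$ decaying like $r^\nu$ lies in the Friedrichs domain, and the three cases are disposed of by nonnegativity ($K_0<0$), by the vanishing of decaying constants ($K_0=0$), and by Proposition~\ref{eigenbound} ($K_0>0$, the suspension and round-sphere cases being excluded by hypothesis and by the presence of a cone angle in $(0,2\pi)$).

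The existence half, however, has a genuine error located exactly at the step you identify as the crux. Proposition~\ref{pr:duality} identifies the cokernel of $A\colon r^{\mu}\calC^{2,\alpha}_b\to r^{\mu-2}\calC^{0,\alpha}_b$ with the nullspace $K_{-\mu}$ of $A$ on $r^{-\mu}\calC^{2,\alpha}_b$; with $\mu=\nu$ this is the nullspace at weight $-\nu<0$, a space of solutions permitted to behave like $a+b\log r$ at the cone points --- not the nullspace at weight $2-\nu>0$. The weight $2-\nu$ comes from the $L^2(dA_g)$-based Sobolev duality stated just before Proposition~\ref{pr:duality}; its normalization differs from the H\"older one by precisely the shift of $2$ you introduced, because a weight-$\sigma$ $b$-H\"older space sits inside the weight-$(\sigma+1-\epsilon)$ Sobolev space relative to $dA_g$. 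With the wrong weight the cokernel appears to consist of decaying solutions and hence to vanish; with the correct weight it need not, and for $K_0=0$ it does not: integration by parts gives $\int_{\Sip}\Delta^{g_0}u\,dA_{g_0}=0$ for every $u\in r^\nu\calC^{2,\delta}_b$ (the boundary terms are $O(\epsilon^{\nu})$), while $r^{\nu-2}\calC^{0,\delta}_b$ contains functions with nonzero integral, so $\Delta^{g_0}$ is \emph{not} surjective from $r^\nu\calC^{2,\delta}_b$. The missing cokernel element is the constant function, which lies in $r^{-\nu}\calC^{2,\delta}_b$ but not in $r^{2-\nu}\calC^{2,\delta}_b$. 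This is exactly why the lemma builds the terms $\chi_j(a_jr+b_j\log r)$ into the solution: $\int_{\Sip}\Delta^{g_0}(\chi_j\log r)\,dA_{g_0}$ is a nonzero multiple of $(1+\beta_j)$, so these terms restore surjectivity; they are not ``absorbed into $r^\nu\calC^{\ell,\delta}_b$'' ($\log r$ and the constants never are; only $r$ is, when $\nu\le 1$). To repair the argument you must show that $K_{-\nu}$ is either trivial or compensated by the span of the $\chi_j$ and $\chi_j\log r$; an energy/boundary-term computation shows that elements of $K_{-\nu}$ cannot carry a $\log r$ term and must then be Friedrichs eigenfunctions, giving $K_{-\nu}=0$ for $K_0\neq 0$ and $K_{-\nu}=\RR$ for $K_0=0$. (The paper itself only sketches this, invoking ``standard arguments'' plus Proposition~\ref{pr:impreg} for the partial expansion.) A last small point: $1$ is not an indicial root of $\Delta^{g_\beta}$, so the double root at $0$ contributes the terms $1$ and $\log r$, not $r$ and $\log r$; the $a_jr$ term in the statement arises from lower-order corrections to the indicial operator, not from the indicial equation itself.
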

\begin{proof}
Injectivity of $\Delta^{g_0} - K_0$ on functions which decay as $r \to 0$ is trivial when $K_0 \leq 0$. For $K_0>0$, if 
$(\Delta^g - 2K_0)u = 0$ and $u \in r^\nu \calC^{m,\delta}_b$, then $u \in \calD_{\mathrm{Fr}}$. Thus so long as $\Sip$ is not a 
$K_0$-suspension, injectivity follows by Proposition~\ref{eigenbound}.  Existence of a solution $u$ which
blows up at most logarithmically can be deduced by standard arguments (or extending the parametrix
discussion in \S 4 to include operators with finite regularity coefficients). The partial expansion of this solution is
then implied by Proposition~\ref{pr:impreg}.
\end{proof}

We can now prove the first theorem, about the structure of the entire space of constant curvature conic metrics.
\begin{theorem} If $\ell \geq 2$, then $\CM^{\ell,\delta,\nu}_\cc(\Sip)$ is a Banach submanifold of $\CM^{\ell,\delta,\nu}(\Sip)$. 
\end{theorem}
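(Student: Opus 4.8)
The plan is to follow the template used for smooth surfaces in \S 3.4 (the Proposition there that $\calM_{K_0}$ is a Banach submanifold), substituting the conic elliptic theory of \S\ref{s:elliptic_theory} --- and above all Lemma~\ref{lem:conseq_eigenbound} --- for the closed-surface elliptic theory. Fix $g\in\CM^{\ell,\delta,\nu}_\cc(\Sip)$ with constant curvature $K_0$. If $K_0 > 0$ and $(\Sip,g)$ is the two-point spherical suspension, this case must be treated separately from the explicit description in Theorem~\ref{fundexist} (then $\Si = S^2$, $k = 2$, $\beta_1 = \beta_2$, and the constant curvature metrics form an explicit finite-dimensional family), so assume from now on that we are not in this situation. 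The main device is the map
\[
\Phi(g') := K^{g'} - c(g'), \qquad c(g') := \frac{2\pi\,\chi(\Sip,\vec\beta(g'))}{\mathrm{Area}(g')},
\]
defined for $g'$ in a neighbourhood of $g$ in $\CM^{\ell,\delta,\nu}(\Sip)$; here $\vec\beta(g')$ is the cone angle vector of $g'$, which depends smoothly on $g'$, and $\chi(\Sip,\vec\beta)$ is as in \eqref{cec}. Since $c(g')$ is exactly the value that Gauss--Bonnet \eqref{eq:GB2} forces on a constant curvature, $\Phi(g') = 0$ if and only if $K^{g'}$ is constant; thus $\Phi^{-1}(0) = \CM^{\ell,\delta,\nu}_\cc(\Sip)$ near $g$ (this handles the hyperbolic, Euclidean and spherical cases at once --- alternatively one can prove $\CM^{\ell,\delta,\nu}_{K_0}$ is a submanifold and recover $\CM^{\ell,\delta,\nu}_\cc$ from the $\RR^+$-action by metric scaling).

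First I would verify that $\Phi$ is smooth into the target $r^{\nu-2}\calC^{\ell-2,\delta}_b(\Sip)$. This rests on the facts that near each cone $p_j$ an element of $\CM^{\ell,\delta,\nu}$ equals a flat model cone plus a term of size $O(r^\nu)$; that $K^{g} = K_0$; that $DK^{g}$ carries $T_{g}\CM^{\ell,\delta,\nu}(\Sip)$ into $r^{\nu-2}\calC^{\ell-2,\delta}_b$ (because the scalar Laplacian of $g$ is a conic operator and so maps $r^\nu\calC^{\ell,\delta}_b \to r^{\nu-2}\calC^{\ell-2,\delta}_b$, while the $2k$ local scaling and cone-angle deformation parameters affect the curvature near the cones only through bounded or logarithmically growing terms); and that the nonlinear remainder is no worse. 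The second step is to linearize at $g$. By Lichnerowicz' formula \eqref{eq:DK},
\[
DK^{g}(fg + h^0) = \tfrac12\bigl((\Delta^{g} - 2K_0)f + \delta^{g}\delta^{g}h^0\bigr),
\]
while $Dc_g$ is a finite-rank operator depending only on the area and the cone-angle components of the variation. To show $D\Phi_g$ is surjective it suffices to surject already on the pure-trace directions $fg$, where $f$ is allowed a leading part $\sum_j \chi_j(a_j + b_j\log r)$ coming from the $2k$-dimensional family of local scalings and cone-angle deformations; on these, $D\Phi_g$ is --- modulo the finite-rank correction $Dc_g$ --- the operator $\tfrac12(\Delta^{g} - 2K_0)$ acting on $\mathrm{span}_j\{\chi_j,\chi_j\log r\} \oplus r^\nu\calC^{\ell,\delta}_b$. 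Since $0 < \nu \ll 1$, Lemma~\ref{lem:conseq_eigenbound} --- whose positive-curvature case uses Proposition~\ref{eigenbound} and Corollary~\ref{cor:invP} and requires precisely that $(\Sip,g)$ not be the spherical suspension --- says that this map already sends $\mathrm{span}_j\{\chi_j\log r\} \oplus r^\nu\calC^{\ell,\delta}_b$ onto $r^{\nu-2}\calC^{\ell-2,\delta}_b$. Hence $D\Phi_g$ is surjective.

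The remaining point is that $\ker D\Phi_g$ is closed and topologically complemented in $T_g\CM^{\ell,\delta,\nu}(\Sip)$; this follows from the Fredholm mapping properties of conic operators (Proposition~\ref{pr:fredsh}), since $\Delta^{g} - 2K_0$ has closed range and finite-dimensional defect on the weighted $b$-H\"older spaces in play, so that $\ker D\Phi_g$ differs by only finitely many dimensions (coming from the cone-angle and area variables) from the nullspace of $\Delta^{g} - 2K_0$ on $r^\nu\calC^{\ell,\delta}_b$ together with the kernel of the gauged linearized curvature operator $L^{g}$ of \eqref{eq:lingein} (compare Proposition~\ref{pr:kerLtf}). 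The implicit function theorem for Banach manifolds then gives that $\Phi^{-1}(0) = \CM^{\ell,\delta,\nu}_\cc(\Sip)$ is a Banach submanifold near $g$, and since $g$ was arbitrary, everywhere.

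I expect the main obstacle to be the weighted-space bookkeeping in the first step --- showing that $\Phi$ genuinely lands in $r^{\nu-2}\calC^{\ell-2,\delta}_b$ for \emph{all} nearby metrics, and in particular that the interplay between the $2k$-dimensional scaling/cone-angle family and the $r^\nu$-decaying part never produces curvature worse than $r^{\nu-2}$ near the cones --- together with the separate treatment of the exceptional spherical-suspension configuration. Once the framework of \S\ref{s:elliptic_theory} is granted, the rest is essentially a transcription of the smooth argument.
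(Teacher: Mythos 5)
Your overall strategy---linearize the curvature map, use Lemma~\ref{lem:conseq_eigenbound} to get surjectivity on the pure-trace directions enlarged by the $2k$-dimensional scaling/cone-angle family, and invoke the implicit function theorem---is exactly the paper's argument. But the specific normalization you chose, $\Phi(g') = K^{g'} - c(g')$ with $c(g') = 2\pi\chi(\Sip,\vec\beta(g'))/\mathrm{Area}(g')$, breaks the surjectivity step. Differentiating the Gauss--Bonnet identity \eqref{eq:GB1}, $\int K^{g'}\,dA^{g'} = 2\pi\chi(\Sip,\vec\beta(g'))$, at $g$ gives
\[
\int_{\Sip} DK^g(h)\,dA^g \;=\; 2\pi\,D\chi(h) - K_0\,D\mathrm{Area}(h),
\]
and a direct computation shows that $Dc_g(h)$ equals exactly $\mathrm{Area}(g)^{-1}\int_{\Sip} DK^g(h)\,dA^g$, i.e.\ the $dA^g$-average of $DK^g(h)$. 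Hence $D\Phi_g = (I-\pi^g)\circ DK^g$, where $\pi^g$ denotes averaging against $dA^g$, and its range is precisely the subspace of mean-zero functions, a codimension-one subspace of $r^{\nu-2}\calC^{\ell-2,\delta}_b(\Sip)$. So the assertion ``Hence $D\Phi_g$ is surjective'' is false, and the implicit function theorem cannot be applied to $\Phi$ with the target you specified. This is the very Gauss--Bonnet obstruction that forces the paper, in the smooth $K_0=0$ and $K_0>0$ cases of \S 3.4, to project the target before applying the IFT; your subtraction of $c(g')$ reintroduces it.

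The gap is repairable, and your own parenthetical remark contains the repair: Lemma~\ref{lem:conseq_eigenbound} makes $DK^g$ itself surjective onto \emph{all} of $r^{\nu-2}\calC^{\ell-2,\delta}_b$ (the $\dot\eta_j\log r$ directions vary $\chi(\Sip,\vec\beta)$, so, unlike in the closed smooth case, there is no integral constraint on the range of $DK^g$). Thus $K$ is a submersion near $g$ and $\CM^{\ell,\delta,\nu}_\cc(\Sip) = K^{-1}(\RR\cdot 1)$ is the preimage of a one-dimensional submanifold under a submersion, hence a Banach submanifold; this is what the paper does. Alternatively, replace $\Phi$ by $(I-\pi^g)\circ\Phi$ with target the $dA^g$-mean-zero subspace. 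Two smaller points: your description of $\ker D\Phi_g$ as finite-dimensionally close to $\ker(\Delta^g-2K_0)$ together with $\ker L^g$ is not accurate---the kernel of $DK^g$ is infinite-dimensional, containing the full space \eqref{eq:tanmet}; complementedness follows instead because a closed complement of the kernel can already be found inside the pure-trace directions, on which $DK^g$ is surjective with finite-dimensional kernel. Finally, your insistence on excluding the spherical suspension is well taken: the paper's proof invokes Lemma~\ref{lem:conseq_eigenbound} without comment, and its positive-curvature case carries exactly that hypothesis, so the exceptional configuration does need the separate explicit treatment you indicate.
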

\begin{proof}
The argument is the same as in the smooth case.  Assuming that $g_0 \in \CM^{\ell,\delta,\nu}_{K_0}$, we wish to show that 
$\CM^{\ell,\delta,\nu}_\cc(\Sip)$ is a smooth manifold in a neighborhood of $g_0$. To this end, consider the map 
\[
K:  \CM^{\ell,\delta,\nu}(\Sip)  \longrightarrow r^{\nu-2} \calC^{\ell-2,\delta}_b(\Sip); \quad g \mapsto K^g. 
\]

The linearization at $g_0$ is
\begin{multline*}
\left. DK\right|_{a=1, \eta=0, h=0}(\dot{a}, \dot{\eta}, \dot{f}, \dot{h}^0) =  \\
\delta^{g_0}\delta^{g_0} \dot{h}^0 + \frac12 \left(\Delta^{g_0} - 2K_0\right)\Bigl(\sum_j \chi_j(\dot{a}_j + 2 \dot{\eta}_j \log r) + \dot{f}\Bigr).
\end{multline*}
By Lemma~\ref{lem:conseq_eigenbound}, the restriction of this to pure trace tensors
\[
DK^{g_0} :  \RR^k_{\dot{a}} \times \RR^k_{\dot{\eta}} \times r^\nu \calC^{\ell,\delta}_b 
\longrightarrow r^{\nu-2} \calC^{\ell-2, \delta}_b
\]
is surjective. By the implicit function theorem, some neighbourhood of $g_0$ in $\CM^{m,\delta,\nu}_\cc(\Sip)$ and some neighbourhood 
in $\CM^{m,\delta,\nu}_{K_0'}$ for $K_0'$ near $K_0$ is a smooth Banach submanifold.  
\end{proof}
\begin{remark}
An alternate proof (which could also have been used in the smooth case) proceeds by choosing an explicit smooth
submanifold of smooth conic metrics $g(w)$ which represent the space of all conformal classes near to $g_0 = g(0)$. 
The constant curvature metrics conformal to each $g(w)$ are obtained by solving the constant curvature equation,
and one then shows that the solutions $\phi(w)$ depend smoothly on $w$. 
\end{remark}

\subsection{The gauged moduli space} 
Now fix any polyhomogeneous element $g_0 \in \CM^{\ell,\delta,\nu}_\cc(\Sip)$,  and define
\[
S_{g_0} = \{ g_0(a,\eta) + f g_0 + R(\kappa):  a,\eta \in \RR^{k}, f \in r^{\nu}\calC^{\ell,\delta}_b, \kappa \in \Stt^\sing\},
\]
where $R$ is the regularizing map \eqref{regmap}. This replaces the subspace $\ker B^{g_0}$ in the smooth case;
notice that only the term $R(\kappa)$ here need not be in the nullspace of $B^{g_0}$ since
\begin{multline*}
B^{g_0} R(\kappa) = B^{g_0} \Bigl( \kappa - \calD^{g_0} (\sum_j \chi_j r^{-1 + \frac{1}{1+\be_j}} \omega_0^{(j)} )\Bigr) =  \\
- P^{g_0} \Bigl( \sum_j \chi_j r^{-1 + \frac{1}{1+\be_j}} \omega_0^{(j)} \Bigr).
\end{multline*}
Near $p_j$, this is  $\calO( r^{-2 + \frac{1}{1+\be_j}})$ rather than the expected $\calO( r^{-3 + \frac{1}{1+\be_j}})$ since 
$(\omega_0^{(j)}, r^{-1 + \frac{1}{1+\be_j}})$ is an indicial pair for $P^{g_0}$.  We can still define the slice
\[
\calS_{g_0,\e} = S_{g_0} \cap \CM^{\ell,\delta,\nu}_\cc(\Sip) \cap \calV,
\]
where $\calV$ is an $\varepsilon$-ball around $g_0$ in $\CM^{\ell,\delta,\nu}(\Sip)$. Clearly, 
\begin{multline*}
T_{g_0} S_{g_0} = \bigl \{ \Bigl(\sum_j \chi_j(\dot{a}_j + 2 \dot{\eta}_j \log r) +  f\Bigr)g_0 + R(\kappa) : \\
\dot a, \dot \eta \in \RR^{k},  f \in r^{\nu}\calC^{\ell,\delta}_b, \kappa \in \Stt^\sing\bigr\}.
\end{multline*}

\begin{proposition}
If $\calV$ is chosen small enough, then the intersection defining $\calS_{g_0, \e}$ is transverse, and hence
$\calS_{g_0,\e}$ is a smooth submanifold of $\CM^{\ell,\delta,\nu}$ with tangent space
\begin{multline*}
T_{g_0} \calS_{g_0,\e} = \Bigl \{ \Bigl( \sum_j \chi_j(\dot a_j + 2 \dot\eta_j \log r) + f\Bigr)g_0 + R(\kappa) : \\ 
\delta^{g_0}\delta^{g_0} R(\kappa) + \frac12 (\Delta^{g_0} - 2K_0)\Bigl(\sum_j \chi_j(\dot{a} + 2 \dot{\eta} \log r) + \dot{f}\Bigr) = 
\lambda \in \RR\Bigr\}. 
\end{multline*}
Furthermore, the map $T_{g_0} \calS_{g_0,\e} \longrightarrow \mathbb{R}^k \times \Stt^\sing \times \RR$, 
\begin{equation}
\Bigl(\sum_j \chi_j(\dot a_j + 2 \dot\eta_j \log r )+ \dot f\Bigr)g_0 + R(\kappa) \longmapsto (\dot \eta, \kappa,\lambda),
\label{projhS}
\end{equation}
is an isomorphism. Thus $\dim \calS_{g_0,\e} = 6\gamma-6 + 3k +1$. Finally, every element in $\calS_{g_0,\e}$ is polyhomogeneous. 
\end{proposition}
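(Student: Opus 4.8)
The plan is to establish, in order: (i) transversality of the intersection defining $\calS_{g_0,\e}$, which upgrades it to a smooth submanifold with the stated tangent space; (ii) the isomorphism property of the projection \eqref{projhS}, which pins down the dimension; and (iii) the polyhomogeneity of every metric in the slice. Throughout I would work in a neighbourhood $\calV$ of the fixed polyhomogeneous $g_0 \in \CM^{\ell,\delta,\nu}_\cc$ with constant curvature $K_0$, and I would handle the exceptional $K_0$-suspension case separately or exclude it, since Corollary~\ref{cor:invP} and Proposition~\ref{eigenbound} fail there.

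For transversality I would argue exactly as in Proposition~11 (the smooth case): given $k = \bar f g_0 + k^0 \in r^{\nu-2}\calC^{\ell-2,\delta}_b$ (or rather a tangent vector to $\CM^{\ell,\delta,\nu}_\cc$), I must split it as $k_1 + k_2$ with $k_1 \in T_{g_0}\CM^{\ell,\delta,\nu}_\cc$ and $k_2 \in T_{g_0}S_{g_0}$. Since pure-trace perturbations $\dot\phi g_0$ always lie in $\ker B^{g_0}$ and the slice contains all of $r^\nu\calC^{\ell,\delta}_b \cdot g_0$ in its trace part, the problem reduces to adjusting the trace coefficient: one needs to solve $(\Delta^{g_0}-2K_0)(\dot f - \text{correction}) = -\delta^{g_0}\delta^{g_0}k^0 + \lambda$ modulo the one-dimensional space $\RR$, which is exactly the mapping property furnished by Lemma~\ref{lem:conseq_eigenbound} together with the orthogonality $\langle \delta^{g_0}\delta^{g_0}k^0, \cdot\rangle$ observations used before; the extra $\lambda \in \RR$ compensates for the (at most one-dimensional) cokernel. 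Once transversality is established, $T_{g_0}\calS_{g_0,\e}$ is cut out inside $T_{g_0}S_{g_0}$ by the single scalar equation $DK^{g_0} = 0$ modulo $\RR$, giving the displayed description.

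The isomorphism \eqref{projhS} then follows by a dimension count combined with injectivity. Injectivity: if $(\dot\eta,\kappa,\lambda) = 0$, then $R(\kappa)=0$ forces $\kappa=0$ (Proposition on injectivity of $R$), $\dot\eta=0$, and the defining equation becomes $(\Delta^{g_0}-2K_0)(\sum_j\chi_j\dot a_j + \dot f)=0$; by Lemma~\ref{lem:conseq_eigenbound} this forces the decaying part $\dot f + \sum_j\chi_j\dot a_j$ — whose leading behaviour is a constant $\dot a_j$ near each $p_j$ — to vanish, hence $\dot a = 0$ and $\dot f = 0$. Surjectivity: given $(\dot\eta,\kappa,\lambda)$, set $F = -\delta^{g_0}\delta^{g_0}R(\kappa) + \lambda - \frac12(\Delta^{g_0}-2K_0)(\sum_j 2\dot\eta_j\chi_j\log r) \in r^{\nu-2}\calC^{\ell-2,\delta}_b$ (the right-hand side lands in this space because $(\omega_0^{(j)},r^{-1+1/(1+\be_j)})$ is an indicial pair, so $B^{g_0}R(\kappa)$ and hence $\delta^{g_0}\delta^{g_0}R(\kappa)$ only drops two powers, not three — this is the key gain), solve $(\Delta^{g_0}-2K_0)u = F$ by Lemma~\ref{lem:conseq_eigenbound} with $u = \sum_j\chi_j(\dot a_j r + b_j\log r) + \tilde u$, and absorb: the $\log r$ terms must be matched to $2\dot\eta_j$ (so $b_j$ is forced, or one observes the $\log r$ coefficient of a Friedrichs-type solution is what it is), and the remaining piece defines $\dot f$. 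This gives $\dim\calS_{g_0,\e} = k + (6\gamma-6+3k) + 1 = 6\gamma-6+3k+1$ after accounting for $\dim\Stt^\sing = 6\gamma-6+3k$ minus the $k$ constraints already used — I would be careful here that the count $\RR^k_{\dot\eta}\times\Stt^\sing\times\RR$ has dimension $k + (6\gamma-6+2k) + 1$ when $\gamma>1$, which gives $6\gamma-6+3k+1$, matching.

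The polyhomogeneity statement is the cleanest part and I expect it to follow formally: any $g \in \calS_{g_0,\e}$ has $K^g = K_0$ constant, hence polyhomogeneous; writing $g = e^{2\phi}g_0$ locally (possible by Proposition~\ref{confparam} after absorbing the explicit $g_0(a,\eta)$ model terms, which are themselves polyhomogeneous) and noting $g_0$ is polyhomogeneous by hypothesis, Proposition~\ref{sharpregres} immediately gives that $\phi$ is polyhomogeneous, hence so is $g$. Alternatively one can apply Proposition~\ref{pr:reg} directly to the quasilinear equation $N^{g_0}(h)=0$ satisfied by $h = g - g_0$, using that $N^{g_0}$ is elliptic with polyhomogeneous coefficients. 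The main obstacle, and where I would spend the most care, is step (ii) — specifically tracking exactly which exponents ($r^0$, $r$, $\log r$, $r^{1/(1+\be_j)}$) appear in the partial expansion of the solution $u$ from Lemma~\ref{lem:conseq_eigenbound} and confirming that the $\dot a_j$, $\dot\eta_j$, and $\lambda$ degrees of freedom are in bijective correspondence with the leading coefficients, with no hidden relations or missing modes; getting the dimension count to come out to $6\gamma-6+3k+1$ rather than off by the number of cone points is the delicate bookkeeping.
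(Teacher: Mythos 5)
Your treatment of transversality and of the isomorphism \eqref{projhS} follows essentially the same route as the paper: you absorb the discrepancy into the pure-trace part and solve $(\Delta^{g_0}-2K_0)u = \lambda - \delta^{g_0}\delta^{g_0}R(\kappa) - \cdots$ via Lemma~\ref{lem:conseq_eigenbound} and the Friedrichs extension, using the key observation that $\delta^{g_0}\delta^{g_0}R(\kappa)$ lands in $r^{\nu-2}\calC^{\ell-2,\delta}_b$ because $(\omega_0^{(j)}, r^{-1+\frac{1}{1+\be_j}})$ is an indicial pair for $P^{g_0}$. Those parts are sound (your first dimension count is garbled, since $\dim \Stt^\sing = 6\gamma-6+2k$, but you correct it in the same sentence), and your explicit flagging of the $K_0$-suspension exception is reasonable.

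The genuine gap is in the polyhomogeneity assertion: neither of your two routes works as stated. Route (a) fails because an element $g = g_0(a,\eta) + fg_0 + R(\kappa)$ of the slice is \emph{not} a conformal multiple of a known polyhomogeneous metric: the trace-free summand $R(\kappa)$ is added, not multiplied, so there is no identity $g = e^{2\phi}\hat g$ with $\hat g$ polyhomogeneous to which Proposition~\ref{sharpregres} applies; and if you instead invoke Proposition~\ref{confparam}, the uniformizing diffeomorphism $F$ and conformal factor $\phi$ it produces have only finite regularity, and upgrading \emph{them} to polyhomogeneity is a separate nontrivial regularity problem, not something Proposition~\ref{sharpregres} hands you. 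Route (b) fails because $h = g - g_0$ does \emph{not} satisfy $N^{g_0}(h) = 0$: the slice is not in the Bianchi gauge, since $B^{g_0}R(\kappa) = -P^{g_0}\bigl(\sum_j \chi_j r^{-1+\frac{1}{1+\be_j}}\omega_0^{(j)}\bigr) \neq 0$ in general. The paper's proof is precisely the repair of your route (b): one computes that $N^{g_0}(g)$ reduces to $-(\delta^{g})^* P^{g_0}\omega$ for the cutoff correction term $\omega$, observes that the extension of the indicial solution $\omega_0^{(j)} r^{-1+\frac{1}{1+\be_j}}$ may be chosen so that $P^{g_0}\omega$ vanishes to infinite order at each $p_j$, and only then applies conic elliptic regularity to the quasilinear equation $N^{g_0}(g) = \eta$ with $\eta$ smooth and vanishing to all orders. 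Without that intermediate step the polyhomogeneity claim is unproved.
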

\begin{remark}
Geometrically, $\dot \eta \in \RR^k$ corresponds to an infinitesimal change of cone angles, $\kappa \in \Stt^\sing$ to an infinitesimal 
change of pointed conformal structure, and $\lambda \in \RR$ to an infinitesimal change of scale.  The assertion 
that \eqref{projhS} is an isomorphism means that $\dot{a}$ and $f$ are determined by the other parameters. Hence we recover 
the full set of expected infinitesimal deformations.
\end{remark}
\begin{proof}
The characterization of $T_{g_0} \calS_{g_0,\e}$ is obvious. Next, any $k \in T_{g_0}\CM^{\ell,\delta,\nu}$ decomposes as
\[
k = \Bigl( \sum_j \chi_j(\dot a_j + 2 \dot\eta_j \log r) + f_1\Bigr)g_0 + h^0, 
\]
where $f_1$ and $h^0$ lie in $r^\nu \calC^{m,\delta}_b$.  To prove the transversality, simply write
\[
k = \Bigl( \Bigl( \sum_j \chi_j(\dot a_j + 2 \dot\eta_j \log r) + f_2\Bigr)g_0 + h^0\Bigr) + f_3 g_0
\]
where the first term is tangent to $\CM^{\ell,\delta,\nu}_\cc$, so $\dot a_j$, $\dot \eta_j$ and $f_2$
are determined by the equation $\frac12 \Delta^{g_0} (\sum \chi_j (\dot a_j + 2 \dot \eta_j \log r) + f_2 )+  
\delta^{g_0}\delta^{g_0} h^0 = \lambda$, and then $f_3 = f_1 - f_2$ so that $f_3 g_0  \in S_{g_0}$. 
We conclude that $\calS_{g_0,\e}$ is a smooth submanifold. 

Now suppose that $(\sum \chi_j (\dot a_j + 2 \dot \eta_j \log r + f )g_0 + R(\kappa) \in T_{g_0} \calS_{g_0,\e}$. 
Given any $(\dot \eta, \kappa, \lambda)$, determine $\dot a$ and $\dot f$ by finding a bounded solution
(using the Friedrichs extension) to the equation
\begin{multline*}
\frac12 (\Delta^{g_0} - 2K_0) \bigl(\sum_j \chi_j \dot a_j + \dot f\bigr) g_0 \\
= \lambda - \delta^{g_0}\delta^{g_0} R(\kappa) 
- (\Delta^{g_0} - 2K_0) \bigl(\sum_j \chi_j \dot \eta_j \log r\bigr).
\end{multline*}
Since the Friedrichs extension is invertible, there is a unique solution, and this satisfies $\dot f \in r^\nu \calC^{\ell,\alpha}_b$.

The formula for the dimension of this space follows directly.

For the final assertion, suppose that $g = g_0(a,\eta) + fg_0 + R(\kappa) \in \calS_{g_0,\e}$. We compute that
\[
N^{g_0}(g) = (K^g - K_0)g + (\delta^g)^* B^{g_0}( g - g_0).
\]
Since $K^g = K_0$ and $B^{g_0}$ annihilates pure trace and transverse-traceless tensors, this reduces to 
$- (\delta^g)^* B^{g_0} \calD^{g_0} \omega = - (\delta^g)^* P^{g_0} \omega$ for some suitable $\omega$. 
However, we can certainly choose extensions of the terms $\omega_j^{(0)} r^{-1 + \frac{1}{1+\be_j}}$ so that they
are annihilated to all orders by $P^{g_0}$, which means that we can assume that this term vanishes to all orders at $p_j$.
In other words, we see that $N^{g_0}(g) = \eta$ which is smooth and vanishes to all orders at every $p_j$ 
The rest of the proof is now essentially the same as in Proposition \ref{sharpregres}.
\end{proof} 

We next show that the local action of our specified group of diffeomorphisms on the slice $\calS_{g_0,\e}$ generates a full neighbourhood in the space of conic constant curvature metrics. 

\begin{theorem}\label{thm:flowbox}
If $\calW \subset\Diff_b^{\ell+1,\delta, \nu+1}(\Sip)$ is a small neighborhood of the identity, then the map
\[
\calW \times \calS_{g_0,\e} \hookrightarrow \CM^{\ell,\delta,\nu}_{\cc}(\Sip), 
\qquad
(F, g) \longrightarrow F^* g,  
\]
is a diffeomorphism onto a neighbourhood of $g$ in $\CM^{\ell,\delta,\nu}_{\cc}(\Sip)$. 
\end{theorem}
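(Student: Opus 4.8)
The plan is to run the argument from the smooth case (the flowbox Proposition at the end of \S 3.4) in the conic category, using the linear elliptic theory of \S 4 and the indicial data of \S 5. Write $\Phi(F,g)=F^*g$. First I would check that $\Phi$ is a $\calC^\infty$ map from $\calW\times\calS_{g_0,\e}$ into the Banach manifold $\CM^{\ell,\delta,\nu}(\Sip)$: by the preceding Proposition, $\calS_{g_0,\e}$ is a finite-dimensional manifold of polyhomogeneous metrics, so the usual failure of pull-back to be jointly smooth at a fixed finite regularity does not arise here --- the metrics being pulled back are $\calC^\infty$ on $\Sip$ and have complete asymptotic expansions at the cone points. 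One also checks that the image lies in $\CM^{\ell,\delta,\nu}_{\cc}(\Sip)$: pull-back preserves constant curvature, and near each $p_j$ the model part of $F=\exp(X)$ is a dilation composed with a rotation --- which sends $g_{\be_j}$ to a constant multiple of itself --- composed with a diffeomorphism equal to the identity to order $\nu+1$, so the conic form \eqref{localcm} is preserved. Thus $\Phi$ is a $\calC^\infty$ map between Banach manifolds, and by the inverse function theorem it suffices to show that $D\Phi$ at $(\mathrm{Id},g_0)$ is an isomorphism.

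A tangent vector to $\calW$ at the identity is a vector field $X=\sum_j\chi_j(a_jr\del_r+b_j\del_y)+\tilde X$ with $\tilde X\in r^{\nu+1}\calC^{\ell+1,\delta}_b(\Sip;T\Sip)$, and, writing $X^\flat$ for its metric dual,
\[
D\Phi|_{(\mathrm{Id},g_0)}(X,v)=\calL_X g_0+v=2(\delta^{g_0})^*X^\flat+v,\qquad v\in T_{g_0}\calS_{g_0,\e}.
\]
This must be shown to map isomorphically onto $T_{g_0}\CM^{\ell,\delta,\nu}_{\cc}(\Sip)$, which by the submanifold theorem above consists of the tensors $h=\bigl(\sum_j\chi_j(\dot a_j+2\dot\eta_j\log r)+\dot f\bigr)g_0+h^0$, with $h^0$ trace-free in $r^\nu\calC^{\ell,\delta}_b$, satisfying
\[
\delta^{g_0}\delta^{g_0}h^0+\tfrac12(\Delta^{g_0}-2K_0)\Bigl(\textstyle\sum_j\chi_j(\dot a_j+2\dot\eta_j\log r)+\dot f\Bigr)=\lambda
\]
for some constant $\lambda$.

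As in \S 3.4 I would treat the trace-free part first, via a conic Hodge-type decomposition: every trace-free $h^0\in r^\nu\calC^{\ell,\delta}_b(\Sip;S^2_0)$ can be written uniquely as $h^0=\calD^{g_0}\omega+R(\kappa)$, with $\kappa\in\Stt^\sing$, $R$ the regularization \eqref{regmap}, and $\omega$ an admissible $1$-form $\omega=\sum_j\chi_j\bigl(a_jr\,dr+b_j(1+\be_j)^2r^2\,dy\bigr)+\tilde\omega$, $\tilde\omega\in r^{\nu+1}\calC^{\ell+1,\delta}_b$. To produce $\omega$ one applies $\delta^{g_0}$ and solves $P^{g_0}\omega=\delta^{g_0}(h^0-R(\kappa))$ on the appropriate weighted $b$-H\"older space; the finite-dimensional freedom in $\kappa$ --- equivalently in the coefficients $\omega_0^{(j)}\in X_{\be_j}$ entering $R$ --- is exactly what kills the cokernel of $P^{g_0}$, using Corollary~\ref{cor:invP} (Friedrichs-invertibility of $P^{g_0}$, with the exceptional $K_0$-suspension excluded by hypothesis; this also gives injectivity of $\calD^{g_0}$ since $\calD^{g_0}\omega=0$ implies $P^{g_0}\omega=\delta^{g_0}\calD^{g_0}\omega=0$), the cokernel--kernel identification of \S 4.3, Lemma~\ref{lem:iso}, and the indicial tables of \S 5; the model coefficients $a_j,b_j$ of $X$ are read off from the leading behaviour of $\omega$ at the indicial root $1$, and $\tilde\omega\in r^{\nu+1}\calC^{\ell+1,\delta}_b$ by Proposition~\ref{pr:impreg}. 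With $h^0$ so decomposed, the curvature constraint above becomes --- via the intertwining identity \eqref{intertwine2}, exactly as in the smooth case --- an equation for the scalars $\dot a,\dot f$ (with $\dot\eta$ free) that is uniquely solvable up to the additive constant $\lambda$ by Lemma~\ref{lem:conseq_eigenbound}, the constant $\lambda$ being precisely the infinitesimal scaling direction of the slice. Comparing with the isomorphism $T_{g_0}\calS_{g_0,\e}\cong\RR^k\times\Stt^\sing\times\RR$ of the preceding Proposition, one concludes that $D\Phi$ is a bounded bijection, hence an isomorphism by the open mapping theorem; the inverse function theorem then gives that $\Phi$ is a diffeomorphism from (a possibly shrunk) $\calW\times\calS_{g_0,\e}$ onto a neighbourhood of $g_0$ in $\CM^{\ell,\delta,\nu}_{\cc}(\Sip)$.

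The main obstacle is the conic Hodge decomposition of $h^0$. Unlike the smooth splitting $\calC^{2,\alpha}(S^2_0)=\ran\calD^g\oplus\ker\delta^g$, which is immediate once $\calD^g$ is known to have closed range, here the range of $\calD^{g_0}$ on the admissible class of $1$-forms is only finite-codimensional, and one must verify that its complement is \emph{exactly} $R(\Stt^\sing)$ --- neither larger nor smaller. This requires careful bookkeeping of the indicial roots near $0$ and $\pm1$ (scaling and cone-angle change at $0$; dilation and rotation at $1$; the spaces $X_{\be_j},Y_{\be_j}$ at $-1+1/(1+\be_j)$), a choice of weight $\nu$ small enough to avoid all indicial roots of $P^{g_0}$ and $L^{g_0}$, and exclusion of the single $K_0$-suspension where $P^{g_0}$ fails to be invertible. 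A softer but genuine point is the $\calC^\infty$-regularity of $\Phi$ at the prescribed finite regularity of metrics and diffeomorphisms, which would fail without the polyhomogeneity of the slice metrics established in the preceding Proposition.
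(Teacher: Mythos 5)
Your proposal is correct and follows essentially the same route as the paper: reduce via the inverse function theorem to the linear decomposition $k=\sigma g_0+R(\kappa)+(\delta^{g_0})^*\omega$, obtain $\omega$ by applying the Bianchi operator and solving a $P^{g_0}$-equation on weighted $b$-H\"older spaces (Propositions~\ref{pr:duality} and \ref{pr:impreg}, Corollary~\ref{cor:invP}), and absorb the indicial term at $-1+\tfrac{1}{1+\beta}$ into the regularization $R(\kappa)$ while the root-$1$ terms $r\,dr$, $r^2dy$ account for the dilation/rotation part of the gauge group. The only cosmetic difference is that the paper solves $P^{g_0}\hat\omega=B^{g_0}k$ outright and defines $\kappa=k^0-\calD^{g_0}\hat\omega$ a posteriori, rather than treating $\kappa$ as an unknown that kills a cokernel of $P^{g_0}$; the two bookkeepings are equivalent.
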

\begin{proof}
This again reduces, using the inverse function theorem, to showing that if $k \in T_{g_0} \CM^{\ell,\delta,\nu}_\cc(\Sip)$,
then $k = (\sigma g_0 + R(\kappa)) + \delta^* \omega$, where $\sigma g_0 + R(\kappa) \in T_{g_0}\calS_{g_0}$ and 
$\omega =  \omega_1 r + \tilde \omega$ for some $ \tilde \omega\in r^{\nu+1}\calC^{\ell+1,\delta}_b(\Sip; T^*\Sip)$. 

The first step in doing this is to show that 
\[
k = \hat\sigma g_0 + \kappa + \delta^* \hat\omega,  \qquad \kappa \in \Stt^\sing.
\]
To find these summands, assume that this decomposition exists and apply $B^{g_0}$ to both sides, yielding
\[
P^{g_0} \hat\omega = B^{g_0} (\delta^{g_0})^* \hat\omega = B^{g_0} k.
\]
The right hand side lies in $r^{\nu-1}\calC^{\ell-1,\delta}_b$. Using Propositions \ref{pr:duality} and 
\ref{pr:impreg}, we obtain a solution with the partial expansion
\[
\hat{\omega} = \omega_0 r^{-1+\frac{1}{1+\beta}}+ \omega_1 r + \tilde{\omega}, \qquad \tilde{\omega} \in r^{\nu+1} \calC^{\ell+1,\delta}_b.
\]
Setting $\kappa=k^0-\calD^{g_0} \hat\omega$, then we have that $\delta^{g_0}(\kappa)=\tr^{g_0}(\kappa) = 0$, and in addition
$|\kappa| = \calO( r^{-1+\frac{1}{1+\beta}})$,  hence $\kappa\in\Stt^\sing$. 

Since $(\delta^{g_0})^* \hat\omega = \calD^{g_0}\hat\omega - \frac 12 (\delta^{g_0} \hat\omega)g_0$, we can thus write
\[
k = \hat\sigma g_0 + \kappa + (\delta^{g_0})^* \hat\omega
\]
where  $\hat\sigma = \tfrac 12( \tr^{g_0} k +  \delta^{g_0} \hat\omega)$.

To get the desired decomposition, set $\omega= \hat \omega - \omega_0 r^{-1+\frac{1}{1+\beta}}$. Then 
$\omega=\omega_1r+\tilde\omega$ as claimed. Furthermore, substituting $\hat \omega$ in this decomposition yields
\begin{align*}
k &= \sigma g_0 + \kappa + \calD^{g_0}(\omega_0 r^{-1+\frac{1}{1+\beta}})+ (\delta^{g_0})^* \omega\\
&= \sigma g_0 + R(\kappa) + (\delta^{g_0})^* \omega
\end{align*}
with $\sigma=\tfrac 12( \tr^{g_0} k +  \delta^{g_0} \omega)$. Note finally that 
$\kappa + \calD^{g_0}(\omega_0 r^{-1+\frac{1}{1+\beta}}) = \calO(r^\nu)$, hence equals $R(\kappa)$.
\end{proof}

We conclude this section by analyzing the action of the entire identity component of the diffeomorphism group on
the space of conic metrics with fixed cone angle. In the smooth case, this action is proper and free when $\chi(\Si) < 0$, 
see \cite[Section 2.2]{Tro}. We prove now that the same is true here. 

\begin{proposition}
Suppose that $\chi(\Sip) < 0$. 
Then the identity component $(\Diff_b^{\ell+1,\delta, \nu+1}(\Sip))_0$ acts freely on $\CM^{\ell,\delta,\nu}_\cc(\Sip)$ 
and $\Diff_b^{\ell+1,\delta, \nu+1}(\Sip)$ acts properly on $\CM^{l,\delta,\nu}_\cc(\Sip)$.	 
\end{proposition}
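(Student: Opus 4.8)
The plan is to treat freeness and properness separately, in both cases adapting the smooth arguments of \cite[Section~2.2]{Tro} and supplying the behaviour near $\frakp$ that is special to the conic setting.

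For freeness, the first step is to observe that every element of the identity component $(\Diff_b^{\ell+1,\delta,\nu+1}(\Sip))_0$ fixes each cone point $p_j$: this is clear for $F\in\calW$, since near $p_j$ such an $F$ is the time-one flow of $\chi_j(a_j r\del_r+b_j\del_y)+\tilde X$ and each of the three summands vanishes at $r=0$, so $p_j$ is a stationary point of the flow; since $\calW$ generates $(\Diff_b)_0$, this holds throughout the identity component. Now suppose $F\in(\Diff_b)_0$ with $F^*g=g$. Then $F$ is an isometry of $(\Sip,g)$, which is connected and smooth by interior elliptic regularity for the constant-curvature equation, $F$ extends to a continuous self-map of $\Si$ fixing each $p_j$, and it is orientation preserving since it is isotopic to $\id$. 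If $F\neq\id$, then at each fixed point $q$ --- a smooth point or one of the $p_j$ --- the isometry $F$ is, in geodesic normal coordinates (resp.\ cone coordinates), a rotation, and this rotation must be nontrivial, for otherwise $F$ would agree with $\id$ on a nonempty open subset of $\Sip$ and hence, by unique continuation for isometries of the connected surface $\Sip$, everywhere. Thus each fixed point is isolated with Lefschetz index $+1$, so by the Lefschetz--Hopf theorem the number of fixed points equals $L(F)$, which, since $F$ is homotopic to $\id$, is $\chi(\Si)=2-2\gamma$. But this is impossible in each of the three cases allowed by $\chi(\Sip)<0$: if $\gamma\geq2$ the count would be negative; if $\gamma=1$ it would be $0$ although $F$ fixes the $k=-\chi(\Sip)\geq1$ cone points; and if $\gamma=0$ it would be $2$ although $F$ fixes the $k=2-\chi(\Sip)\geq3$ cone points. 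Hence $F=\id$, and the action is free.

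For properness I would argue by sequences: suppose $g_n\to g$ in $\CM^{\ell,\delta,\nu}_\cc(\Sip)$ and $F_n\in\Diff_b^{\ell+1,\delta,\nu+1}(\Sip)$ with $h_n:=F_n^*g_n\to\hat g$; one must extract a subsequence of $\{F_n\}$ converging in $\Diff_b^{\ell+1,\delta,\nu+1}(\Sip)$. Because $(g_n)$ and $(h_n)$ converge, both families have uniformly controlled geometry --- convergent constant curvature and area, and near each $p_j$ uniform closeness to the model $g_{\be_j}$, with all cone angles confined to a compact subset of $(0,2\pi)$ --- so there is no collapsing and every $g_n$ and $h_n$ is uniformly equivalent to a fixed background metric on $\Sip$. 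The maps $F_n\colon(\Sip,h_n)\to(\Sip,g_n)$ are isometries fixing each $p_j$, hence uniformly bi-Lipschitz for the background metric, so by the Arzel\`{a}--Ascoli theorem a subsequence satisfies $F_n\to F$ and $F_n^{-1}\to G$ uniformly on $\Si$; passing to the limit shows $F$ is a bijective isometry $(\Si,\hat g)\to(\Si,g)$ fixing each $p_j$.

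The remaining step, which I expect to be the main obstacle, is to upgrade $F$ to an element of $\Diff_b^{\ell+1,\delta,\nu+1}(\Sip)$ and to promote $F_n\to F$ to that topology. An isometry between two fixed constant-curvature conic metrics satisfies an elliptic harmonic-map-type system whose coefficients are polyhomogeneous and, along the sequence, convergent; near each $p_j$ this is a conic elliptic equation, so the theory of \S\ref{s:elliptic_theory} applies and shows that an isometry of a metric of the form $g_{\be_j}+\calO(r^\nu)$ fixing $p_j$ is a rotation to leading order with remainder in $r^{\nu+1}\calC^{\ell+1,\delta}_b$, which places $F$ in the claimed group and yields $F_n\to F$ in its topology. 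The delicate part is precisely this weighted-H\"older bookkeeping near the cone points, ruling out any loss of decay or regularity there; on compact subsets of $\Sip$ the regularity and convergence are classical. Once this is in place, $F^*\hat g=g$ follows by continuity, so $\{F_n\}$ subconverges inside $\{(F',g')\colon g'\in\CM^{\ell,\delta,\nu}_\cc,\ (F')^*g'\in\CM^{\ell,\delta,\nu}_\cc\}$, which is the asserted properness.
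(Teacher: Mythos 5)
Your freeness argument is correct and is essentially the paper's: a nontrivial isometry isotopic to the identity has only isolated fixed points, each of Lefschetz index $+1$ because the differential there is a nontrivial rotation, and the resulting identity $L(\bar F)=\chi(\Si)$ contradicts the fact that the $k$ cone points are all fixed, in exactly the three-case breakdown you give (all of which are instances of $\chi(\Si)<k$, i.e.\ $\chi(\Sip)<0$). The one point you pass over is that ``rotation at a cone point'' and the extension of $F$ to a diffeomorphism $\bar F$ of the closed surface must be interpreted in the uniformizing complex coordinate; the paper arranges this by first invoking Proposition~\ref{confparam} to put $g$ into the model form $g_{\be_j,K}$ near each $p_j$ before applying Lefschetz--Hopf.

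The properness half has a genuine gap, and it is the one you flag yourself. Arzel\`a--Ascoli gives only $C^0$ subconvergence of the $F_n$; the entire analytic content of the proposition is the upgrade to convergence in $\Diff_b^{\ell+1,\delta,\nu+1}(\Sip)$, i.e.\ showing that near each $p_j$ the limit isometry is a rotation plus an error in $r^{\nu+1}\calC^{\ell+1,\delta}_b$ and that the $F_n$ converge in that weighted topology. Saying ``the theory of \S\ref{s:elliptic_theory} applies'' does not close this: the results of \S\ref{s:elliptic_theory}--5 concern the specific linear operators $\Delta$, $P$, $L$, whereas here one must handle the nonlinear harmonic map system, and the required refined conic regularity is imported in the paper from Gell-Redman's work on harmonic maps of conic surfaces, after first using Proposition~\ref{confparam} and the conformal invariance of harmonicity in the domain variable to reduce to a flat conic domain and to write $\tilde F_n(z)=z+v_n$ with $L_nv_n=f_n+Q(v_n)$. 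Away from the cone points you also still need the Schauder bootstrap via the Christoffel symbol identity (as in Tromba) to pass from $C^0$ to $\calC^{\ell+1,\delta}$ convergence; that step is routine but is part of the argument and should be recorded.
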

\begin{proof}
Suppose that $g_n, g_n'$ are two sequences of elements in $\CM^{\ell,\delta,\nu}_\cc(\Sip)$ which converge in the topology of 
this space to $g$ and $g'$, and $F_n$ is a sequence in $\Diff_b^{\ell+1,\delta, \nu+1}(\Sip)$ such that $F_n^* g_n = g_n'$.  
We must show that some subsequence of the $F_n$ converges in $\Diff_b^{\ell+1,\delta, \nu+1}(\Sip)$ 
to a diffeomorphism $F$, so that necessarily $F^* g = g'$.   

Tromba's proof of this in the smooth case proceeds as follows. First choose a finite net of points $\{q_i\}$ so that the
$\epsilon$ neighborhoods of these points cover $\Si$ and are normal neighborhoods for $g$, and hence for $g_n$ for $n$ large.
Next choose a subsequence $F_{n'}$ so that $F_{n'}(q_i)$ converges for every $i$, and set $q_i' = \lim_{n'\to\infty} F_{n'}(q_i)$.  
Pre- and post-composing with the exponential maps at $q_i$ and $q_i'$, and using that the $F_{n'}$ are isometries,
we see that $F_{n'}$ converges on all of $\Si$; however, due to the loss of derivatives in using the exponential maps,
this convergence is only in $\calC^{\ell-1,\delta}$.  The final step is to observe that the formula relating the 
second derivatives of $F_{n'}$ to the Christoffel symbols of $g_n$ and $g_n'$ allows one to conclude that the $F_{n'}$
converge in $\calC^{\ell+1,\delta}$. 

In the conic case, we may employ a very similar strategy.  The first step is the same, and we can choose a subsequence (which
we immediately relabel as $F_n$ again) which converges at the $\epsilon$-net $\{q_i\}$ and also at each of the cone points $p_j$. 
The argument in the smooth case localizes, and allows us to conclude that the $F_n$ converge in $\calC^{\ell+1,\delta}$ 
on the complements of $\epsilon$-balls around the $p_j$.  Hence it remains to prove the result in fixed balls around
an isolated cone point.  For this we use the conformal representations of these conic metrics, and write
\[
g_n = H_n^* ( e^{2\phi_n} g_0), \quad g_n' = (H_n')^* (e^{2\phi_n'} g_0).
\]
We have shown in Proposition~\ref{confparam} that $\phi_n - \lambda_n$ and $\phi_n' - \lambda_n'$ both converge
in $r^\nu \calC^{\ell,\delta}_b$, that $\lambda_n$ and $\lambda_n'$ both converge in $\RR$, and that $H_n, H_n'$ 
converge in $\Diff_b^{\ell+1,\delta,\nu+1}$. The identification between the model metrics $g_0$ in the domain and the range 
is fixed.  Set $\tilde{F}_n = H_n' \circ F_n \circ H_n^{-1}$, so that $\tilde{F}_n^* (e^{2\phi_n'} g_0) = e^{2\phi_n} g_0$. 

We now observe that $\tilde{F}_n: (\calU, e^{2\phi_n} g_0) \to (\calU, e^{2\phi_n'} g_0)$ is a harmonic mapping; this 
condition is conformally invariant, so we may as well replace the domain space by $(\calU, g_0)$, or even
by $(\calU, |dz|^2)$. Refined regularity results are available in this setting; we reference the ones proved
by Gell-Redman \cite{Gell-Redman} since they are closest in spirit to this paper. Precomposing with a convergent
family of rotations, $\tilde{F}_n(z) = z + v_n(z)$ where $v_n \in r^{\nu +1}\calC^{\ell+1,\delta}_b$; the Taylor expansion
of the harmonic map operator can be written as $L_n v_n = f_n + Q(v_n)$ where $Q(v_n)$ is a remainder
term which is quadratic in $\nabla v_n$ and $f_n$ is the inhomogeneous term which measures the extent
by which the identity map fails to be harmonic between $(\calU, |dz|^2)$ and  $(\calU, e^{2\phi_n'} g_0)$. 
By assumption, $f_n$ and $L_n$ converge in the appropriate topologies, so standard a priori estimates show
that some subsequence of the $v_n$ converges in $r^{\nu+1}\calC^{\ell+1,\delta}_b$. This finishes the proof of properness of the action.

It remains to prove freeness of the action of the identity component. To that end, let $F \in (\Diff_b^{\ell+1,\delta, \nu+1}(\Sip))_0$ 
and $g \in \CM^{l,\delta,\nu}_\cc(\Sip)$ be such that $F^*g=g$. Using Proposition~\ref{confparam} we may assume (after possibly 
applying a further element of $(\Diff_b^{\ell+1,\delta, \nu+1}(\Sip))_0$) that $g$ takes the model form $g_{\be_j,K}$ \eqref{models1} 
near each $p_j$. Writing $g_{\be_j,K}$ in terms of a local complex coordinate $z$, it is clear that $F$ extends to a diffeomorphism 
$\bar F$ of the closed surface which is isotopic to the identity and acts as a rotation near each $p_j$. 

Following \cite{Tro}, we employ the Lefschetz fixed point formula. Since $F$ is an isometry, its fixed points are either 
isolated or else $F$ and hence $\bar F \equiv \mathrm{id}$.  Assuming the former, let $p \in \Sip$ be a fixed point of 
$\bar F$. Then $\det (\id - d\bar F(p)) >0$ since $d \bar F(p)$ is a nontrivial rotation.  A similar calculation holds
near each $p_j$ with respect to the local complex coordinates there. Hence the Lefschetz number of $\bar F$ satisfies
\[
L(\bar F) = \sum_{p \in Fix(\bar F)} \sign \det (\id - d\bar F(p))\geq k.
\]  
However, since $\bar F$ is isotopic to the identity, $L(\bar F)=\chi(\Sigma)$ which is less than $k$ if either $\gamma=0$ 
and $k\geq 3$ or else if $\gamma \geq 1$ and $k \geq 1$.  This is a contradiction.
\end{proof}

\section{Deformation theory of complete hyperbolic metrics}
Our starting point is the classical existence theorem: if $\chi(\Sip) < 0$, then each pointed conformal class on $\Sip$ 
contains a unique complete  hyperbolic metric, and this metric has finite area.  A modern elementary proof using only 
basic elliptic PDE and barrier arguments appears in \cite{MT}, and see also \cite{JMS} for a proof based on Ricci flow. 
The Teichm\"uller space $\calT_{\gamma,k}$ is the quotient of the space of all such hyperbolic metrics by the group of 
diffeomorphisms of $\Si$ isotopic to the identity and fixing $\frakp$. The unpublished thesis \cite{Zeit} develops the local structure 
of this space in the style of \cite{Tro}. We now discuss briefly how the arguments above may be modified to go a bit further
than in \cite{Zeit} in this setting of complete finite area hyperbolic metrics. 

Let us set up some notation.  Just as in the conical setting, the ends of these solution metrics are all `standard',
and are isometric to a model hyperbolic cusp $( (0,1]_r \times S^1, g_c)$, where
\[
g_c = \frac{dr^2}{r^2} + r^2 dy^2,
\]
or in the equivalent conformal form $g_c = (|z| \log |z|)^{-2} |dz|^2$ using the change of variables $|z| = \rho = e^{-1/r}$. 
Now define $\HCM(\Sip)$ to be the space of all `asymptotically hyperbolic cusp' metrics $g$ on $\Sip$ which 
are asymptotic to $g_c$ near each $p_j$. More precisely, $\HCM^{\ell,\delta,\nu}(\Sip)$ consists of all metrics which 
are in $\calC^{\ell,\delta}$ away from the $p_j$ and which near these points have the form $g_c + h$ where $h \in \calC^{\ell,\delta,\nu}_\hc$ 
for some $\nu > 0$. This function space is defined as follows. Decompose the function $v$ on $[0,1]_r \times S^1_y$ as $v = v_0 + v_\perp$, 
where $v_\perp$ is the sum over all nonzero eigenmodes on $S^1$ and $v_0$ is independent of $y$.  Fixing any $c \in (0,1)$, set 
\[
\calC^{\ell,\delta,\nu}_\hc = \left\{ v: v_0 \in r^\nu \calC^{\ell,\delta}_b([0,1]), \quad v_\perp \in e^{-c/r} \calC^{\ell,\delta}_b([0,1]\times S^1) \right\}. 
\]
We are interested in the subspace $\HCM^{\ell,\delta,\nu}_{-1}(\Sip)$ of hyperbolic metrics. 

These function spaces are well suited to the solvability properties of the scalar operator $\Delta^g + 2$ appearing in the linearization
of $DK$ at a hyperbolic metric. 
\begin{proposition}
If $g \in \HCM^{m,\delta,\nu}(\Sip)$, then 
\[
\Delta^g + 2:  \calC^{m,\delta,\nu}_\hc(\Sip) \longrightarrow \calC^{m-2,\delta,\nu}_{\hc}(\Sip)
\]
is Fredholm provided $\nu \neq -2, 1$. It is an isomorphism when $\nu \in (-2,1)$. 
\label{cuspfr}
\end{proposition}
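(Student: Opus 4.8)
The plan is to prove this by the usual parametrix patching argument. Away from the cusps $\Sip$ is relatively compact and $\Delta^g+2$ is a standard elliptic operator, so it suffices to produce a model inverse, modulo compact errors, in a fixed punctured neighbourhood of each $p_j$. There $g = g_c + h$ with $h$ decaying (i.e. lying in $\calC_\hc$ with a positive weight), so to leading order $\Delta^g+2 = \Delta^{g_c}+2$ plus terms carrying an extra decaying factor; decomposing functions as $v = v_0 + v_\perp$ (the $S^1$-average and its complement), as in the definition of $\calC^{\ell,\delta,\nu}_\hc$, these two pieces essentially decouple, the off-diagonal coupling induced by $h$ being lower order and absorbable into the parametrix. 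So the analysis breaks into a one-dimensional zero-mode problem and a higher-mode problem.

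On the zero modes, $\Delta^{g_c}+2 = -\bigl((r\del_r)^2 + r\del_r - 2\bigr) = -(r\del_r + 2)(r\del_r - 1)$, a constant-coefficient operator in $r\del_r$ whose indicial roots are the roots of $\zeta^2 + \zeta - 2 = 0$, namely $\zeta = -2$ and $\zeta = 1$. Hence the zero-mode operator is Fredholm on $r^\nu\calC^{m,\delta}_b([0,1])$ exactly when $\nu\notin\{-2,1\}$, and for $\nu\in(-2,1)$ it is explicitly invertible by integrating the ODE, using that neither indicial root lies in the open weight interval. On the modes $e^{iky}$, $k\neq0$, the operator is $-\bigl((r\del_r)^2 + r\del_r\bigr) + k^2/r^2 + 2$; since the potential $k^2/r^2$ blows up at $r = 0$, no power $r^\zeta$ is an approximate solution, so these modes contribute no indicial roots, and the recessive and dominant solutions behave like $\exp(\mp|k|/r)$. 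This is exactly why $\calC^{\ell,\delta,\nu}_\hc$ imposes the super-exponential weight $e^{-c/r}$ on $v_\perp$: on that space the higher-mode operator is invertible for $c$ in a suitable range. Patching the zero-mode and higher-mode model inverses with an interior parametrix yields a two-sided parametrix modulo compact operators, so $\Delta^g+2\colon \calC^{m,\delta,\nu}_\hc \to \calC^{m-2,\delta,\nu}_\hc$ is Fredholm whenever $\nu\neq -2,1$.

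Now let $\nu\in(-2,1)$. For injectivity, suppose $(\Delta^g+2)u = 0$ with $u\in\calC^{m,\delta,\nu}_\hc$. Near each cusp the higher-mode part $u_\perp$ is automatically $O(e^{-c/r})$, while $u_0$ solves a decaying perturbation of $(r\del_r+2)(r\del_r-1)u_0 = 0$; by asymptotic integration $u_0 \sim A\,r^{-2} + B\,r$ (no logarithms, both roots being simple, plus lower-order corrections), and since $u_0 = O(r^\nu)$ with $\nu > -2$ we must have $A = 0$, i.e. $u = O(r)$ near every $p_j$. A direct computation with $g_c$ then gives that $u$, $\nabla u$ and $\Delta^g u = -2u$ all lie in $L^2(dA_g)$, and the boundary terms in Green's formula on $\{r_j\geq\e\}$ are $O(\e^3)$; integrating by parts,
\[
0 = \int_{\Sip}\langle (\Delta^g+2)u, u\rangle\, dA_g = \|\nabla u\|^2_{L^2} + 2\|u\|^2_{L^2} \ \geq\ 2\|u\|^2_{L^2},
\]
so $u\equiv 0$. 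Note this uses only the coercive shift $+2$ (which turns the harmless indicial roots $0,-1$ of $\Delta^{g_c}$ into $-2,1$), not that $g$ is exactly hyperbolic.

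For surjectivity, $\Delta^g+2$ is formally self-adjoint on $L^2(dA_g)$, and since it is Fredholm in this weight range the cusp analogue of Proposition~\ref{pr:duality} identifies its cokernel on $\calC^{m,\delta,\nu}_\hc$ with the nullspace of $\Delta^g+2$ at the dual weight, which here is $-1-\nu$ (the indicial roots $-2$ and $1$ being symmetric about $-1/2$). For $\nu\in(-2,1)$ one has $-1-\nu\in(-2,1)$ as well, so this nullspace is trivial by the injectivity argument above, and the map is therefore an isomorphism. I expect the main obstacle to be technical rather than conceptual: \S\,4 develops the linear theory (parametrices, duality, improved regularity as in Propositions~\ref{pr:duality} and \ref{pr:impreg}) only for $b$/conic operators, whereas the cusp ends fall outside that calculus, so one must either cite the corresponding cusp-calculus results or — since the zero-mode piece is an honest $b$-operator in $r$ and the higher-mode piece is invertible by the potential estimate — assemble the parametrix, duality and partial-expansion statements by hand, also keeping track of the finite-regularity (non-polyhomogeneous) coefficients coming from $h$.
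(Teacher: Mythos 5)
Your proposal is correct and follows essentially the same route as the paper: localize to the cusp ends, split into the $S^1$-average (a $b$/Euler operator with indicial roots $-2$ and $1$, whence the Fredholm weights and the invertibility for $\nu\in(-2,1)$) and the nonzero modes (controlled by the exponential weight $e^{-c/r}$), then patch parametrices; your explicit integration-by-parts injectivity and duality surjectivity arguments supply details the paper leaves implicit. The one step the paper actually writes out in full is precisely the step you assert without proof, namely the uniformity in $k$ of the decay of the nonzero-mode solutions: the paper exhibits $e^{-c/r}$ as a supersolution, $L_j e^{-c/r} = \bigl(2 + (j^2-c^2)/r^2\bigr)e^{-c/r} \geq c' e^{-c/r}$ for $0<c<1$, and uses the maximum principle to get $|u_j|\leq A j^{-2} e^{-c/r}$, which is what allows summing over $j$ to conclude $u_\perp \in e^{-c/r}\calC^{\ell,\delta}_b$.
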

\begin{proof}
It suffices to produce an inverse for this operator on any cusp end, $\calE := \{0 < r \leq r_0\} \times S^1$, say with Dirichlet
boundary conditions.  Given $f \in \calC^{m-2,\delta,\nu}_{\hc}(\calE)$, we wish to produce a solution $u \in \calC^{m,\delta,\nu}_{\hc}$,
at least up to a finite rank error.  In $\calE$, we have
\[
\Delta^{g_c} + 2= -r^2 \del_r^2 - 2 r\del_r - r^{-2} \del_y^2 + 2.
\]
The induced equation on the zero mode is of Euler type, and it is straightforward to see that this one-dimensional 
operator has closed range when $\nu \neq -2, 1$, is an isomorphism when $\nu \in (-2,1)$, is surjective when 
$\nu > 1$ and injective when $\nu < -2$. 

On the other hand, consider the induced operator on the orthogonal complement of the zero mode.  We 
decompose further into the different eigenmodes and solve the separate ODE's 
\begin{equation}
L_j u_j := \left(-r^2 \del_r^2 -2 r\del_r + \frac{j^2}{r^2} + 2\right) u_j = f_j, \quad |j| \geq 1.
\label{redeqn}
\end{equation}
It is not hard to produce a unique decaying solution for each of these equations which vanishes at $r=r_0$, 
so the problem is to obtain a uniform rate of decay and to sum over $j$. For this, note that
\[
L_j e^{-c/r} = \left(2 + \frac{j^2-c^2}{r^2} \right) e^{-c/r} \geq c' e^{-c/r} > 0
\]
since $j^2 - c^2 \geq 1-c^2 > 0$ and $r < r_0$, so $e^{-c/r}$ is a supersolution.  Suppose now that $|f_j(r)| \leq a e^{-c/r}$
for all $j$. Then, for some $A > 0$ depending on $a$ and $c'$, 
\[
L_j ( u_j - \frac{A}{j^2} e^{-c/r} ) < 0, \qquad L_j (\frac{A}{j^2} e^{-c/r} - u_j ) > 0,
\]
which implies that $|u_j| \leq A j^{-2} e^{-c/r}$, and hence that $|u_\perp| \leq A' e^{-c/r}$. 

Local elliptic estimates (applied on the universal cover of $\calE$, for example) now show that $u \in \calC^{m,\delta,\nu}_{\hc}(\calE)$,
as required.

These local parametrices may be patched together in the usual way to obtain a global parametrix which acts on these
function spaces, and which is inverse to $L$ up to compact error. 
\end{proof}

When $\nu \in (-2,1)$, this map is an isomorphism. We now fix any $\nu \in (-2,0)$; we restrict $\nu$ to be negative
because of the nonlinearity of the problem. 
\begin{proposition}
The subspace $\HCM^{m,\delta, \nu}_{-1}(\Sip)$ is a Banach submanifold of $\HCM^{m,\delta,\nu}(\Sip)$.
\end{proposition}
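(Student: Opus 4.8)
The plan is to run the same argument used to prove that $\CM^{\ell,\delta,\nu}_\cc(\Sip)$ is a Banach submanifold, except that here it is actually simpler: the model cusp $g_c$ carries no parameters (there is no cone angle left to vary), so there is no finite-dimensional family of background metrics to keep track of, and since $K_0 = -1 < 0$ the relevant scalar operator is an honest isomorphism on the weighted spaces with no exceptional cases. Fix a complete finite-area hyperbolic metric $g_0 \in \HCM^{m,\delta,\nu}_{-1}(\Sip)$ and consider the curvature map
\[
K:\ \HCM^{m,\delta,\nu}(\Sip) \longrightarrow \calC^{m-2,\delta,\nu}_\hc(\Sip), \qquad g \longmapsto K^g + 1 .
\]
The first task is to verify that this is well defined and smooth near $g_0$: $g \mapsto K^g$ is a quasilinear second-order differential operator, so the only point to check is that for $g = g_c + h$ near $g_0$ (with $h$ in the cusp function space near each $p_j$) one has $K^g + 1 \in \calC^{m-2,\delta,\nu}_\hc(\Sip)$, with smooth dependence on $g$. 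One checks this against the model computation $\Delta^{g_c}(r^\nu) = -\nu(\nu+1)r^\nu$ on the zero mode, together with the fact that products and the conformal exponential preserve the splitting of $\calC^{\ell,\delta,\nu}_\hc$ into a polynomially decaying zero mode and exponentially decaying higher modes.

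Next I would compute the linearization. By Lichnerowicz' formula \eqref{eq:DK}, at $g_0$ with $K^{g_0} = -1$,
\[
DK^{g_0}(h^0 + f\cdot g_0) = \tfrac12\bigl((\Delta^{g_0}+2)f + \delta^{g_0}\delta^{g_0} h^0\bigr),
\]
so the restriction to pure-trace tensors $h = f\cdot g_0$ is $\tfrac12(\Delta^{g_0}+2)f$. Since $\nu \in (-2,0)\subset(-2,1)$ has been fixed above, Proposition~\ref{cuspfr} gives that $\Delta^{g_0}+2 : \calC^{m,\delta,\nu}_\hc(\Sip) \to \calC^{m-2,\delta,\nu}_\hc(\Sip)$ is an isomorphism. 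Hence $DK^{g_0}$ is already surjective when restricted to the pure-trace directions, and it is injective there. This yields a topological splitting $T_{g_0}\HCM^{m,\delta,\nu}(\Sip) = \ker DK^{g_0} \oplus \{f\cdot g_0 : f \in \calC^{m,\delta,\nu}_\hc\}$, on whose second (closed) summand $DK^{g_0}$ restricts to an isomorphism. The implicit function theorem then shows that $0$ is a regular value of $K$ in a neighborhood $\calU$ of $g_0$, and so $K^{-1}(0)\cap\calU = \HCM^{m,\delta,\nu}_{-1}(\Sip)\cap\calU$ is a Banach submanifold of $\HCM^{m,\delta,\nu}(\Sip)$.

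The main obstacle is the first step, namely the smoothness and mapping properties of the curvature map between these anisotropic weighted Hölder spaces: because $\calC^{\ell,\delta,\nu}_\hc$ treats the $S^1$-zero mode (polynomial decay $r^\nu$) and the higher modes (exponential decay $e^{-c/r}$) on quite different footings, one must check that this structure is respected by the nonlinearities in $g \mapsto K^g$ — this is exactly why $\nu$ is restricted to be negative. Once that bookkeeping is carried out, everything else reduces to Proposition~\ref{cuspfr} and is formally identical to the conic and smooth cases.
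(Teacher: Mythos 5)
Your proposal is correct and takes essentially the same route as the paper: the paper also reduces the statement to surjectivity of $DK^g$ via restriction to pure-trace tensors $h = f\cdot g$, where it becomes $\tfrac12(\Delta^g+2)f$, and then invokes Proposition~\ref{cuspfr} together with the implicit function theorem. The extra care you take in checking that $g \mapsto K^g + 1$ respects the anisotropic structure of $\calC^{\ell,\delta,\nu}_{\hc}$ is a detail the paper leaves implicit, but it does not change the argument.
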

The proof is the same as in the smooth and conic cases. Suppose that $g \in \HCM^{m,\delta,\nu}_{-1}$. We must
show that 
\[
DK^g: \calC^{m,\delta,\nu}_{\hc}(\Sip) \longrightarrow \calC^{m-2,\delta,\nu}_{\hc}(\Sip) 
\]
is surjective. Restricting to $h = fg$, this reduces to the surjectivity of $\Delta^g + 2$, which is
what we have just proved. 

Next, define the slice
\begin{equation}
\calS_{g} = ((1 + f)g + \Stt^{\sing}) \cap \HCM^{m,\delta,\nu}_{-1}(\Sip) \cap \calV, \quad f \in \calC^{m,\delta,\nu}_{\hc},
\label{slicehc}
\end{equation}
where $\calV$ is a small neighborhood of $g$. Notice that no regularization of $\kappa \in \Stt^{\sing}$ is needed 
because for any such $\kappa$, $|\kappa|_g \leq C e^{-1/r} r^{-2}$, which lies in $\calC^{m,\delta,\nu}_{\hc}$. 
\begin{proposition}
The intersection \eqref{slicehc} is transverse, so $\calS_g$ is a smooth finite dimensional submanifold of
dimension $6\gamma - 6 + 2k$. 
\end{proposition}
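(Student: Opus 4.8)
The plan is to mimic the gauged-slice argument used in the conic case in \S6 (the proof that the intersection defining $\calS_{g_0,\e}$ is transverse), with the substantial simplification that here the pure-trace model operator $\Delta^g+2$ is an honest isomorphism rather than merely Fredholm with a cokernel, so that no finite-dimensional obstruction terms appear.

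First I would record the three relevant tangent spaces at $g$. Because the model cusp $g_c$ is rigid --- there is no finite-dimensional family of model metrics, in contrast with the conic family $g_0(a,\eta)$ --- the tangent space $T_g\HCM^{m,\delta,\nu}(\Sip)$ consists precisely of symmetric $2$-tensors $h$ that lie in $\calC^{m,\delta}$ away from $\frakp$ and in $\calC^{m,\delta,\nu}_\hc$ near each $p_j$, with no extra parameters. The affine slice space $B := (1+f)g+\Stt^\sing$ is itself contained in $\HCM^{m,\delta,\nu}(\Sip)$, since $|\kappa|_g = \calO(e^{-1/r}r^{-2})$ places $\Stt^\sing$ inside $\calC^{m,\delta,\nu}_\hc$ (this is why no analogue of the regularizing map $R$ is needed), and $T_g B = \{fg+\kappa : f\in\calC^{m,\delta,\nu}_\hc,\ \kappa\in\Stt^\sing\}$ --- a direct sum of a closed pure-trace subspace and the finite-dimensional space $\Stt^\sing$, hence closed and complemented. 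Finally, Lichnerowicz' formula \eqref{eq:DK} with $K_0=-1$ gives $T_g\HCM^{m,\delta,\nu}_{-1}=\ker DK^g = \{h^0+fg : \tr^g h^0 = 0,\ (\Delta^g+2)f+\delta^g\delta^g h^0 = 0\}$.

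Next I would verify transversality, that is, $T_g\HCM^{m,\delta,\nu}_{-1}+T_g B = T_g\HCM^{m,\delta,\nu}$. Equivalently one checks that the linearization at $g$ of the restricted curvature map $B\ni g'\mapsto K^{g'}\in\calC^{m-2,\delta,\nu}_\hc$ is surjective: this linearization sends $fg+\kappa$ to $\tfrac12\bigl((\Delta^g+2)f+\delta^g\delta^g\kappa\bigr)=\tfrac12(\Delta^g+2)f$, since $\delta^g\kappa=0$ for $\kappa\in\Stt^\sing$, and $\Delta^g+2:\calC^{m,\delta,\nu}_\hc\to\calC^{m-2,\delta,\nu}_\hc$ is an isomorphism by Proposition~\ref{cuspfr} because $\nu\in(-2,0)\subset(-2,1)$. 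Concretely, given any $k=k^0+f_kg$ with $\tr^g k^0=0$, one solves $(\Delta^g+2)f_2=(\Delta^g+2)f_k+\delta^g\delta^g k^0$ and writes $k=(k-f_2g)+f_2g$, with the first summand lying in $T_g\HCM^{m,\delta,\nu}_{-1}$ and the second in $T_g B$; note no $\Stt^\sing$ component is required here.

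From transversality and the implicit function theorem --- using that $\HCM^{m,\delta,\nu}_{-1}$ is already a Banach submanifold by the preceding proposition, and that $B$ is a closed affine subspace with complemented model --- it follows that $\calS_g=B\cap\HCM^{m,\delta,\nu}_{-1}\cap\calV$ is, for $\calV$ small, a smooth submanifold with $T_g\calS_g=T_g B\cap\ker DK^g$. To finish I would identify this intersection: if $\kappa+fg$ lies in it, then $\delta^g\delta^g\kappa=0$ forces $(\Delta^g+2)f=0$, hence $f=0$ by injectivity, so $T_g\calS_g=\Stt^\sing$ and $\dim\calS_g=\dim\Stt^\sing=6\gamma-6+2k$ (for $\gamma>1$; the value is $2k-3$ when $\gamma=0$ and $2k$ when $\gamma=1$, cf.~\eqref{eq:ttsingdim}). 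I do not expect a genuine obstacle: the analytic input --- the isomorphism property of $\Delta^g+2$ on the cusp-weighted H\"older spaces and the Banach-submanifold property of $\HCM^{m,\delta,\nu}_{-1}$ --- is already in hand, so the only points needing attention are the bookkeeping ones noted above, namely that the cusp ends contribute no finite-dimensional parameters and that $\Stt^\sing$ already sits inside $\calC^{m,\delta,\nu}_\hc$.
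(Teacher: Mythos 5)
Your proposal is correct and follows essentially the same route as the paper: transversality is obtained by solving $(\Delta^g+2)f=\delta^g\delta^g h^0$ via the isomorphism of Proposition~\ref{cuspfr} on the weight range $\nu\in(-2,1)$, with no $\Stt^\sing$ component needed in the decomposition, exactly as in the paper's proof. The only cosmetic difference is in the dimension count: the paper invokes the (nonlinear) statement that each $\kappa\in\Stt^\sing$ determines a unique conformal factor $\phi$ with $e^{2\phi}(g+\kappa)$ in the moduli space, whereas you compute $T_g\calS_g=\Stt^\sing$ directly from $\delta^g\kappa=0$ and the injectivity of $\Delta^g+2$; both rest on the same analytic input.
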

\begin{proof}
Suppose that $f_1 g + h_1^0 \in T_g \HCM^{m,\delta,\nu}$; we wish to write this as the sum $( f_2 g + h_2^0) + (f_3 g + \kappa)$,
where $\delta^g \delta^g h_2^0 + (\Delta^g + 2)f_2 = 0$ and $\kappa \in \Stt^\sing$.  We can then let $h_2^0 = h_1^0$, 
determine $f_2$ by the equation, and let $\kappa = 0$ and $f_3 = f_1 - f_2$.  To see that the dimension is
correct, note that if $\kappa \in \Stt^\sing$, then there is a unique $\phi$ such that 
$e^{2\phi}(g + \kappa) \in \HCM^{m,\delta,\nu}(\Sip)$. 
\end{proof}

Now define $\Diff_{\hc}^{\ell+1,\delta, \nu}(\Sip)$ in a similar way as we did in the conic case. (Note
that the weight $\nu$ is not shifted to $\nu+1$ here.)   Let $\calW$ be a neighborhood of the identity in this group.
\begin{proposition}
Assume that the base metric $g$ is polyhomogeneous. Then for sufficiently small neighborhoods $\calV$ and $\calW$, the map 
\[
\calW \times \calS_{g} \longrightarrow \HCM^{\ell,\delta,\nu}_{-1}(\Sip), \qquad (F, h) \mapsto F^*( g+h),
\]
is a diffeomorphism onto a small neighborhood of $g$ in $\HCM^{m,\delta,1}_{-1}$. 
\end{proposition}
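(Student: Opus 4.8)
The plan is to transcribe the proof of Theorem~\ref{thm:flowbox}, with the cusp linear theory (Proposition~\ref{cuspfr} and its analogues for the first order operators $B^g,\calD^g$ and for $P^g$ on $1$-forms) in place of the conic estimates of \S4--\S5. First I would record that every element of $\calS_g$ is polyhomogeneous, which is what makes the flowbox map smooth. If $g'=(1+f)g+\kappa\in\calS_g$ with $\kappa\in\Stt^\sing$ and $K^{g'}=-1$, then since $B^g$ annihilates pure trace tensors and $B^g\kappa=0$, one has $B^g(g'-g)=0$, so
\[
N^g(g')=(K^{g'}+1)g'+(\delta^{g'})^*B^g(g'-g)=0 .
\]
As $N^g$ is elliptic and quasilinear with polyhomogeneous coefficients --- here the hypothesis that $g$ is polyhomogeneous enters --- the cusp regularity theory (the analogue of Proposition~\ref{pr:reg}, cf.\ also \cite{Gell-Redman}) gives that $g'$ is polyhomogeneous. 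Hence $(F,g')\mapsto F^*g'$ is a smooth map between the relevant Banach manifolds.

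By the inverse function theorem it then suffices to show that the linearization of the map at $(\id,g)$,
\[
\calC^{\ell+1,\delta,\nu}_\hc(\Sip;T^*\Sip)\times T_g\calS_g\ni(\omega,v)\longmapsto(\delta^g)^*\omega+v\in T_g\HCM^{\ell,\delta,\nu}_{-1}(\Sip),
\]
is an isomorphism. Here $T_g\calS_g=\Stt^\sing$: by the previous proposition a tangent vector $\dot f g+\kappa$ satisfies $(\Delta^g+2)\dot f+\delta^g\delta^g\kappa=0$, and since $\kappa$ is divergence free this forces $(\Delta^g+2)\dot f=0$, hence $\dot f=0$, because $\Delta^g+2$ is injective on $\calC^{\ell,\delta,\nu}_\hc$ for $\nu\in(-2,0)$ (Proposition~\ref{cuspfr}). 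Injectivity of the linearization is then immediate: if $(\delta^g)^*\omega+\kappa=0$, applying $B^g$ gives $P^g\omega=B^g(\delta^g)^*\omega=-B^g\kappa=0$, and $P^g=\tfrac12(\Delta_1^g+2)$ is invertible on $\calC^{\ell+1,\delta,\nu}_\hc(\Sip;T^*\Sip)$ --- the cusp counterpart of Corollary~\ref{cor:invP} for $K_0=-1<0$ --- so $\omega=0$ and then $\kappa=0$.

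For surjectivity, given $k\in T_g\HCM^{\ell,\delta,\nu}_{-1}(\Sip)$, I would write $k=fg+k^0$ with $\tr^g k^0=0$ and solve $P^g\omega=B^g k$ for $\omega\in\calC^{\ell+1,\delta,\nu}_\hc(\Sip;T^*\Sip)$, using invertibility of $P^g$; in contrast with the conic case no partial expansion of $\omega$ is needed, precisely because the cusp diffeomorphism weight is $\nu$ rather than $\nu+1$ and $\Stt^\sing$ requires no regularization. Setting $\kappa:=k^0-\calD^g\omega$ and using $\delta^g\calD^g=P^g$ together with $B^g k=\delta^g k^0$ gives $\delta^g\kappa=\delta^g k^0-P^g\omega=0$, so $\kappa$ is transverse-traceless; moreover $\kappa\in\calC^{\ell,\delta,\nu}_\hc$, so as a meromorphic quadratic differential it can have at most a simple pole at each $p_j$ (a double pole would contribute a tensor whose $g$-norm is $\sim r^{-2}$, excluded since $\nu>-2$), whence $\kappa\in\Stt^\sing=T_g\calS_g$. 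Writing $(\delta^g)^*\omega=\calD^g\omega-\tfrac12(\delta^g\omega)g$ yields the decomposition
\[
k=\bigl(f+\tfrac12\delta^g\omega\bigr)g+\kappa+(\delta^g)^*\omega=:\sigma g+\kappa+(\delta^g)^*\omega .
\]
Finally, applying $DK^g$ and using that it annihilates $\kappa$ and $(\delta^g)^*\omega$ (infinitesimal diffeomorphisms preserve the curvature $-1$ condition) while $DK^g(k)=0$, I get $\tfrac12(\Delta^g+2)\sigma=0$, hence $\sigma=0$; thus $k=\kappa+(\delta^g)^*\omega$ lies in the image, and the inverse function theorem finishes the proof after shrinking $\calW$ and $\calV$.

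The algebra above is essentially a transcription of the conic case, so I expect the real obstacle to be the linear input it uses: the Fredholm and invertibility theory for $P^g$ (and the mapping properties of $B^g,\calD^g$) on the cusp H\"older spaces $\calC^{\ell,\delta,\nu}_\hc$, together with polyhomogeneity of solutions of $N^g(g')=0$. The paper develops the cusp analogue of \S4 in detail only for the scalar operator $\Delta^g+2$; one needs the corresponding statements for $P^g$ on $1$-forms --- whose indicial roots are shifts of the scalar ones, as in the conic computations of \S5 --- and the elliptic regularity statement for the quasilinear operator $N^g$. These should follow from the same $b$-calculus machinery (or from the refined cusp regularity results of \cite{Gell-Redman}), but carrying them out is where the work lies; in particular one should check that the fixed $\nu\in(-2,0)$ avoids the indicial roots of $P^g$ in the range where invertibility is claimed, shrinking the admissible interval for $\nu$ if necessary.
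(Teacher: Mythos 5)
Your argument is correct and follows essentially the same route as the paper's proof: reduce to the inverse function theorem and decompose a tangent vector as $\kappa + (\delta^g)^*\omega$ by solving $P^g\omega = B^g k$ via the cusp analogue of Proposition~\ref{cuspfr}, with $\kappa = k^0 - \calD^g\omega \in \Stt^\sing$. You simply fill in details the paper leaves as ``clear'' (injectivity, the vanishing of the trace component, polyhomogeneity of slice elements), and you correctly identify the deferred linear input --- the mapping theory for $P^g$ on the cusp H\"older spaces --- which the paper likewise leaves to the reader.
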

\begin{proof}
We check that any $f g + h^0$ with $\delta^g \delta^g h^0 + (\Delta^g + 2)f = 0$ can be written
as $\kappa + (\delta^g)^* \omega$. We find $\omega$ by solving $P^g \omega = B^g (\delta^g)^* \omega = B^g h^0$.
This is accomplished by a direct analogue of Proposition~\ref{cuspfr} which we leave to the reader. 
Then $\kappa =  h^0 - \calD^g \omega$ lies in $\Stt^\sing$. The rest is clear.  
\end{proof}

 \section{The full moduli space} 
Let $\CM_{\cc,1}(\Sip)$ denote the space of conic constant curvature metrics normalized to have unit area. 
By the Gauss-Bonnet theorem \eqref{eq:GB2} 
$K^g$ is zero, negative or positive if and only if $\vec\beta \in \mathsf{Euc}$, $\mathsf{Hyp}$, and $\mathsf{Sph}$, respectively. 

The Teichm\"uller space of conic constant curvature metrics by definition is $\CM_{\cc,1}(\Sip)$ modulo diffeomorphisms isotopic 
to the identity: 
\[
\calT^{conic}_{\gamma,k}= \CM^{\ell, \delta,\nu}_{\cc,1}(\Sip) / (\Diff_b^{\ell+1,\delta, \nu+1}(\Sip))_0.
\]
This is a smooth manifold of dimension $6\gamma-6 + 3k$ according to the results in Section \ref{conic_deformation_theory}. 
The standard Teichm\"uller space $\calT_{\gamma,k}$ is a ball of dimension $6\gamma-6 + 2k$. Assigning to $g$ its 
cone angle parameters $\vec\beta \in(-1,0)^k$, we obtain a smooth map 
\[
B : \calT^{conic}_{\gamma,k} \to (-1,0)^k.
\]	
The fiber $B^{-1}(\vec \beta)$ (if non-empty) is identified with $\calT_{\gamma,k}$ in the following way.  An element $g \in \CM^{\ell,\delta,\nu}$
defines a conformal structure of finite type on $\Sigma\setminus \mathfrak p$, hence an element in $\calT_{\gamma,k}$. Conversely, 
by \cite{Tr}, \cite{Mc}, each conformal structure in $\calT_{\gamma,k}$ is represented by a unique conic constant curvature metric with 
cone angles parameters $\vec\beta$. The standard identification of $\calT_{\gamma,k}$ with the space of complete, finite area
hyperbolic metrics on $\Sigma \setminus \mathfrak p$ is the case where all cone angles are zero. By the characterization of $T_{g_0}\calS_{g_0,\e}$ in \eqref{projhS}, the differential of $B$ is surjective at every point in $\calT^{conic}_{\gamma,k}$. We thus obtain the following

\begin{proposition}
The map
\[
B: \calT^{conic}_{\gamma,k} \to (-1,0)^k
\]	
is a submersion, hence the fiber $B^{-1}(\vec\beta)$ is a submanifold for any $\vec\beta \in (-1,0)^k$. The image of $B$ is the 
region $\mathsf{Hyp}$ if $\gamma \geq 1$ and the region $\mathsf{Hyp} \cup \mathsf{Euc} \cup \mathsf{Sph}$ if $\gamma=0$.
\end{proposition}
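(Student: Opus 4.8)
The plan is to establish the two assertions in turn: first that $B$ is a submersion, then the description of its image.

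For the submersion statement, I would work in the local model for $\calT^{conic}_{\gamma,k}$ near a point $[g_0]$ provided by Theorem~\ref{thm:flowbox}: after the unit-area normalization, a neighbourhood of $[g_0]$ is modelled on the unit-area part of the slice $\calS_{g_0,\e}$, so that $T_{[g_0]}\calT^{conic}_{\gamma,k}$ is $T_{g_0}\calS_{g_0,\e}$ modulo the rescaling line $\RR\cdot g_0$. The key computation is that, in the slice coordinates, $B$ depends only on the parameter $\eta$: near $p_j$ the metric $g=g_0(a,\eta)+fg_0+R(\kappa)$ equals $a_j r^{2\eta_j}g_{\be_j}$ modulo terms of strictly lower order, and the substitution $s=\tfrac{\sqrt{a_j}}{\eta_j+1}\,r^{\eta_j+1}$ puts the leading part in the standard form $ds^2+\bigl((1+\be_j)(1+\eta_j)\bigr)^2 s^2\,dy^2$, exhibiting the cone-angle parameter of $g$ at $p_j$ as $(1+\be_j)(1+\eta_j)-1$. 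Hence
\[
dB\Bigl(\bigl(\textstyle\sum_j\chi_j(\dot a_j+2\dot\eta_j\log r)+\dot f\bigr)g_0+R(\kappa)\Bigr)=\bigl((1+\be_j)\,\dot\eta_j\bigr)_{j=1,\dots,k},
\]
which, since $\be_j\ne-1$ and $\dot\eta$ is a free coordinate on $T_{g_0}\calS_{g_0,\e}$ by the isomorphism~\eqref{projhS}, is onto $\RR^k$; as it annihilates the rescaling direction (a constant rescaling fixes all cone angles) it descends to a surjection on $T_{[g_0]}\calT^{conic}_{\gamma,k}$. This shows $B$ is a submersion, so each nonempty fibre $B^{-1}(\vec\beta)$ is a smooth submanifold of codimension $k$, i.e.\ of dimension $6\gamma-6+2k$.

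For the image, I would note that $\vec\beta$ lies in the image of $B$ precisely when there exists a conic constant-curvature metric of unit area with cone angles $\vec\beta$, equivalently (by Theorem~\ref{fundexist}, applied to the background conformal class) when either $\chi(\Sip,\vec\beta)\le0$, or else $\Si=S^2$, $k\ge3$, $\chi(\Sip,\vec\beta)>0$, and $\vec\beta\in\calF$; the curvature is $\le0$ in the first case and $>0$ in the second. (I take $k\ge3$ when $\gamma=0$, which is also what makes $\chi(\Sip)<0$ and hence the diffeomorphism action proper and free, so that $\calT^{conic}_{\gamma,k}$ is a manifold; for $k=2$, $\gamma=0$ the image is the diagonal $\be_1=\be_2$.) For the reverse inclusion I would invoke Gauss--Bonnet~\eqref{eq:GB2}: the curvature of any such metric has the sign of $\chi(\Sip,\vec\beta)$, so $K\le0$ imposes nothing beyond $\chi(\Sip,\vec\beta)\le0$, whereas $K>0$ forces $\Si=S^2$ --- since $\chi(\Sip,\vec\beta)=2-2\gamma+\sum_j\be_j<0$ whenever $\gamma\ge1$, using $\be_j<0$ --- and then $\vec\beta\in\calF$ by Proposition~\ref{troyanov-suff}. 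Putting this together: for $\gamma\ge1$ one has $\chi(\Sip,\vec\beta)<0$ for all $\vec\beta\in(-1,0)^k$, so the image is all of $(-1,0)^k$, which is the hyperbolic region $\mathsf{Hyp}$ for genus $\gamma$ (understood as $\{\chi(\Sip,\vec\beta)<0\}$), while for $\gamma=0$ the image is $\{\textstyle\sum_j\be_j<-2\}\cup\{\textstyle\sum_j\be_j=-2\}\cup\bigl(\{\textstyle\sum_j\be_j>-2\}\cap\calF\bigr)=\mathsf{Hyp}\cup\mathsf{Euc}\cup\mathsf{Sph}$.

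The main obstacle is really just the $dB$ computation above --- verifying that the slice parameter $\eta_j$ moves the $j$-th cone angle with nonzero derivative $1+\be_j$ --- after which surjectivity of $dB$ drops out of the already-established isomorphism~\eqref{projhS}. Everything about the image is then straightforward bookkeeping with the existence theorem (Theorem~\ref{fundexist}), Gauss--Bonnet~\eqref{eq:GB2}, and the necessity of the Troyanov condition (Proposition~\ref{troyanov-suff}).
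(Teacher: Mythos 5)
Your argument is correct and follows essentially the same route as the paper: surjectivity of $dB$ comes from the fact that $\dot\eta$ is a free coordinate on $T_{g_0}\calS_{g_0,\e}$ via the isomorphism \eqref{projhS} (you usefully make explicit the derivative $(1+\be_j)\dot\eta_j$ of the cone-angle parameter, which the paper leaves implicit in its remark that $\dot\eta$ corresponds to infinitesimal cone-angle changes), and the image is identified exactly as intended from Theorem~\ref{fundexist}, Gauss--Bonnet~\eqref{eq:GB2}, and Proposition~\ref{troyanov-suff}. Your side remarks on the unit-area normalization, the reading of $\mathsf{Hyp}$ when $\gamma\geq 1$, and the implicit restriction to $k\geq 3$ when $\gamma=0$ are all consistent with the paper's conventions.
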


\begin{remark}
With a little more effort one may show that $\calT^{conic}_{\gamma,k}$ is diffeomorphic to $\mathsf{Hyp} \times \calT_{\gamma,k}$ 
if $\gamma\geq 1$, resp.\ to $\mathsf{Hyp} \cup \mathsf{Euc} \cup \mathsf{Sph} \times \calT_{\gamma,k}$ if $\gamma=0$.
\end{remark}

This is particularly interesting in the case $\gamma=0$, i.e.\ on the 2-sphere. According to the above decomposition of the 
Troyanov region we obtain a corresponding decomposition of the Teichm\"uller space of conic metrics
\[
\calT^{conic}_{0,k} = B^{-1}(\mathsf{Hyp}) \cup B^{-1}(\mathsf{Euc}) \cup B^{-1}(\mathsf{Sph}).
\]
Here $B^{-1}(\mathsf{Hyp})$ and $B^{-1}(\mathsf{Sph})$ are both open whereas the Euclidean structures $B^{-1}(\mathsf{Euc})$ 
determine a hypersurface separating the spherical from the hyperbolic structures. In particular, any Euclidean cone structure 
on the 2-sphere possesses deformations into spherical as well as hyperbolic ones. This phenomenon has been studied 
before in the 3-dimensional context in \cite{PorWei}, and in fact the regeneration of a Euclidean structure (obtained as the 
collapsed limit of a sequence of hyperbolic structures) into a spherical one is an important step in the proof of Thurston's 
Orbifold Theorem. 


\vskip1in

\end{document}